\def\dotminus{\stackon[.2ex]{$-$}{$.$}}
\renewenvironment{itemize}
  {\begin{list}{$\triangleright$}{%
  \setlength{\parskip}{0mm}
  \setlength{\topsep}{.4\baselineskip}
  \setlength{\rightmargin}{0mm}
  \setlength{\listparindent}{0mm}
  \setlength{\itemindent}{0mm}
  \setlength{\labelwidth}{3ex}
  \setlength{\itemsep}{.2\baselineskip}
  \setlength{\parsep}{.2\baselineskip}
  \setlength{\partopsep}{0mm}
  \setlength{\labelsep}{1ex}
  \setlength{\leftmargin}{\labelwidth+\labelsep}
  }}{%
\end{list}}
\declaretheoremstyle[
  headfont=\normalfont\bfseries,
  notefont=\bfseries,
  notebraces={(}{)},
  bodyfont=\normalfont,
  postheadspace=1em,
  mdframed={
  nobreak=true,
  outerlinewidth=1pt,
  linecolor=gray!20,
  roundcorner = 1ex,    
  backgroundcolor=gray!10, 
  innerleftmargin=1ex,
  leftmargin=0ex,
  innerrightmargin=1ex,
  rightmargin=0ex,
  innertopmargin=1.5ex, 
  innerbottommargin=1ex, 
  skipabove=3ex,
  skipbelow=1ex}, 
]{mystyle}
\declaretheorem[style=mystyle]%
{theorem}
\declaretheorem[style=mystyle,sibling=theorem]%
{lemma,proposition,fact,corollary,definition,notation,remark,example,claim,question}
\declaretheoremstyle[
  spaceabove=6pt, 
  spacebelow=6pt, 
  headfont=\normalfont\itshape, 
  bodyfont = \normalfont,
  postheadspace=1em, 
  qed=\qedsymbol, 
  headpunct={.}]
{myproof} 
\declaretheorem[style=myproof, unnumbered]{proof}
\renewcommand*{\emph}[1]{%
   \smash{\tikz[baseline]\node[rectangle, fill=teal!25, rounded corners, inner xsep=0.5ex, inner ysep=0.2ex, anchor=base, minimum height = 2.7ex]{\strut #1};}}
\author{Domenico Zambella}
\thanks{Dipartimento di Matematica, Universit\`a di Torino, via Carlo Alberto 10, 10123 Torino.}
\begin{document}
\title{Standard analysis}
\hfill\texttt{~}
\maketitle
\raggedbottom

\begin{abstract}
  Let ${\EuScript L}$ be a first-order two-sorted language.
  Let $S$ be some fixed structure.
  A \textit{standard\/} structure is an ${\EuScript L}$-structure of the form $\langle M,S\rangle$, where $M$ is arbitrary.
  When $S$ is a compact topological space (and ${\EuScript L}$ meets a few additional requirements) it is possible to adapt a significant part of model theory to the restricted class of standard structures.
  This has been shown by Henson and Iovino for normed spaces (see, e.g.~\cite{HI} and references cited therein) and has been generalized to a larger class of structures in~\cite{clcl}.
  We further generalize their approach to a significantly larger class.\\[1ex]
  \noindent
  The starting point is to prove that every standard structure has a \textit{positive\/} elementary extension that is standard and realizes all positive types that are finitely consistent.
  A second important step is to prove that (in a sufficiently saturated structure) the negation of a positive formula is equivalent to an infinite (but small) disjunction of positive formulas.
  The main tool is the notion of approximation of a positive formula and of its negation that have been introduced by Henson and Iovino.\\[1ex]
  \noindent
  We review and elaborate on the properties of positive formulas and their approximations.
  In parallel, we introduce \textit{continuous\/} formulas which provide a better counterpart to Henson and Iovino theory of normed spaces and the real-valued model theory of metric spaces of~\cite{BBHU}.
  To demonstrate this setting in action we discuss $\omega$-categoricity and (local) stability.
\end{abstract}


\section{Standard structures}\label{uno}

\def\ceq#1#2#3{\parbox[t]{23ex}{$\displaystyle #1$}\parbox{6ex}{\hfil $#2$}{$\displaystyle #3$}}

In order to keep the paper self-contained, in the first few sections we revisit and extend some results of~\cite{clcl}.
Some theorems are rephrased and we add a few remarks and examples.

Let \emph{$S$\/} be some fixed first-order structure which is endowed with a Hausdorff compact topology (in particular, a normal topology).
The language \emph{${\EuScript L}_{\sf S}$\/} contains a relation symbol for each compact subsets $C\subseteq S^n$ and a function symbol for each continuous functions $f:S^n\to S$.
In particular, there is a constant for each element of $S$.
According to the context, $C$ and $f$ denote either the symbols of ${\EuScript L}_{\sf S}$ or their interpretation in the structure $S$.
Such a language ${\EuScript L}_{\sf S}$ is much larger than necessary but it is convenient because it uniquely associates a structure to a topological space.
The notion of a \textit{dense set of formulas\/} (Definition~\ref{def_dense}) helps to reduce the size of the language when required.

The most straightforward examples of structures $S$ are the unit interval $[0,1]$ with the usual topology, and its homeomophic copies $\mathds{R}^+\cup\{0,\infty\}$ and $\mathds{R}\cup\{\pm\infty\}$.
Interesting examples are obtained when $S=S_n(M)$, the space of complete $n$-types over a model $M$ with the logic topology.
These will be discussed in subsequent work.

We also fix a first-order language \emph{${\EuScript L}_{\sf H}$\/} which we call the language of the \emph{home sort.}

\begin{definition}\label{def_0}
  Let \emph{${\EuScript L}$\/} be a two sorted language. 
  The two sorts are denoted by \emph{${\sf H}$} and \emph{${\sf S}$.} 
  The language ${\EuScript L}$ expands both ${\EuScript L}_{\sf H}$ and ${\EuScript L}_{\sf S}$.
  It also has some symbols sort ${\sf H}^n\times{\sf S}^m\to {\sf S}$.
  An \emph{${\EuScript L}$-structure\/} is a structure of signature ${\EuScript L}$ that interprets these symbols in equicontinuous functions (definition below).

  A \emph{standard structure\/} is a two-sorted ${\EuScript L}$-structure of the form $\langle M,S\rangle$, where $M$ is any structure of signature ${\EuScript L}_{\sf H}$ and $S$ is fixed.
  
  For convenience we require that ${\EuScript L}$ also has a  relation symbol $r_{\varphi}(x)$ for every $\varphi(x)\in {\EuScript L}_{\sf H}$.
  All ${\EuScript L}$-structures are assumed to model $r_{\varphi}(x)\leftrightarrow\varphi(x)$.
  In other words, the Morleyzation of ${\EuScript L}_{\sf H}$ is assumed in the definition of ${\EuScript L}$-structure.

  Standard structures are denoted by the domain of their home sort.
\end{definition}


Let $f:A\times S^m\to S$, where $A$ is a pure set.
Let $\varepsilon$ and $\delta$ range over the closed neighborhoods of the diagonal of $S\times S$, respectively over the open neighborhoods of the diagonal of $S^m\times S^m$.
We say that $f$ is \emph{equicontinuous\/} if for every $\varepsilon$ there is a $\delta$ such that $\big\langle f(a,\alpha),f(a,\alpha')\big\rangle\in\varepsilon$ for every $a\in A$ and every $\langle\alpha,\alpha'\rangle\in\delta$.

A \emph{modulus of equicontinuity\/} for $f$ is a function that maps $\varepsilon$ to some $\delta$ that satisfies the condition above.

Clearly, saturated ${\EuScript L}$-structures exist but, except for trivial cases (when $S$ is finite), these are not standard.
As a remedy, below we carve out a set of formulas ${\EuScript F}^{\rm p}$, the set of positive formulas, such that every model has a positive elementary saturated extension that is also standard.

As usual, ${\EuScript L}_{\sf S}$, ${\EuScript L}_{\sf H}$, and ${\EuScript L}$ denote both first-order languages and the corresponding set of formulas.
We write ${\EuScript L}_x$ when we restrict variables to $x$.
Up to equivalence we may assume that ${\EuScript L}$ has two types of atomic formulas

\begin{itemize}
  \item[1.] $r_\varphi(x)$ for some $\varphi(x)\in{\EuScript L}_{\sf H}$;
  \item[2.] $\tau(x\,;\eta)\in C$, where $C\subseteq S^n$ is a compact set and $\tau(x\,;\eta)$ is a tuple of terms of sort ${\sf H}^{|x|}\times {\sf S}^{|\eta|}\to {\sf S}$ (up to equivalence, equality has also this form);
\end{itemize}

\begin{definition}\label{def_LL}
  A formula in ${\EuScript L}$ is \emph{positive\/} if it uses only the Boolean connectives $\wedge$, $\vee$; the quantifiers $\forall\raisebox{1.1ex}{\scaleto{\sf H}{.8ex}\kern-.2ex}$, $\exists\raisebox{1.1ex}{\scaleto{\sf H}{.8ex}\kern-.2ex}$ of sort ${\sf H}$; and the quantifiers $\forall\raisebox{1.1ex}{\scaleto{\sf S}{.8ex}\kern-.2ex}$, $\exists\raisebox{1.1ex}{\scaleto{\sf S}{.8ex}\kern-.2ex}$ of sort ${\sf S}$.
  The set of positive formulas is denoted by \emph{${\EuScript F}^{\rm p}$.}

  A \emph{continuous\/} formula is a positive formula where only atomic formulas as in (2) above occur.
  The set of continuous formulas is denoted by \emph{${\EuScript F}^{\rm c}$.}


  We will use Latin letters $x,y,z$ for variables of sort ${\sf H}$ and Greek letters $\eta,\varepsilon$ for variables of sort ${\sf S}$.
  Therefore we can safely drop the superscript from quantifiers if they are followed by variables.
\end{definition}


\begin{notation}
  Much of the theory below is developed in parallel in the positive and the continuous case.
  For the sake of conciseness, we write \emph{p/c} as a placeholder that can be replaced for either one of \emph{p} or \emph{c.}
\end{notation}

The positive formulas of Henson and Iovino correspond to those formulas in ${\EuScript F}^{\rm c}$ that do not have quantifiers of sort ${\sf S}$, nor functions symbols of sort ${\sf H}^n\times{\sf S}^m\to {\sf S}$ with $nm>0$.

Allowing quantifiers of sort ${\sf S}$ comes almost for free.
In Section~\ref{cIelimination} we will see that the formulas in ${\EuScript F}^{\rm p/c}$ are approximated by formulas in ${\EuScript F}^{\rm p/c}$ without quantifiers of sort ${\sf S}$.

Instead, choosing ${\EuScript F}^{\rm p}$ over ${\EuScript F}^{\rm c}$ comes at a price.
However ${\EuScript F}^{\rm p}$ simplifies the comparision with classical model theory.

An important fact to note about ${\EuScript F}^{\rm c}$ is that it is a language without equality of sort ${\sf H}$.
The difference between ${\EuScript F}^{\rm p}$ and ${\EuScript F}^{\rm c}$ is similar to the difference between the space of absolutely integrable functions (w.r.t.\@ some measure $\mu$) and the Lebesgue space $L^1(\mu)$.
For the latter only equality almost everywhere is relevant.
However, even when $L^1(\mu)$ is our focus of interest, it is often easier to argue about real valued functions.
For a similar reason ${\EuScript F}^{\rm p}$ and ${\EuScript F}^{\rm c}$ are better studied in parallel.

The extra expressive power offered by quantifiers of sort ${\sf S}$ is convenient.
For instance, the example below should convince the reader that ${\EuScript F}^{\rm c}$ has at least the same expressive power as real valued logic, see~\cite{BBHU} or~\cite{K}.

\begin{example}\label{ex_Rvlogic}
  Let $S=[0,1]$, the unit interval.
  Let $\tau(x)$ be a term of sort ${\sf H}^{|x|}\to {\sf S}$.
  Then there is a positive formula that says $\sup_x \tau(x)=\alpha$.
  Indeed, the expression
  
  \ceq{\hfill\forall x\ \big[\tau(x)\le\alpha\big]}{\wedge}{\forall \varepsilon>0\ \exists x\ \big[\alpha\le \tau(x)+ \varepsilon\big].}
  
  is formalized by the ${\EuScript F}^{\rm c}$-formula  

  \ceq{\hfill\forall x\ \big[\tau(x)\dotminus\alpha\in\{0\}\big]}
  {\wedge}{\forall \varepsilon \Big[\varepsilon\in\{0\}\ \vee\ \exists x\ \big[(\alpha\dotminus \tau(x))\dotminus\varepsilon\in\{0\}\big]\Big].}
\end{example}

A precise comparison with real-valued logic is not straightforward and outside the scope of this paper.
In real-valued logic $S=[0,1]$ is interpreted as a set of truth values.
Functions of sort ${\sf H}^n\to {\sf S}$ are called atomic formulas; functions of sort ${\sf S}^m\to {\sf S}$ are called propositional connectives.
Real-valued logic does not provide any counterpart for functions sort ${\sf H}^n\times{\sf S}^m\to {\sf S}$ when $nm>0$.

\hfil***

Normed spaces are among the motivating examples for the study of standard structures.
The requirement for $S$ to be compact may appear as a limitation, but it is not a serious one.
In fact, functional analysis and model theory are only concerned with the unit ball of normed spaces.

To formalize the unit ball of a normed space as a standard structure (where $S$ is the unit interval) is straightforward in theory but cumbersome in practice.
Alternatively, one could view a normed space as a many-sorted structure: a sort for each ball of radius $n\in\mathds{Z}^+$.
Unfortunately, this also results in a bloated formalism.

A neater formalization is possible if one accepts that some positive elementary extensions (to be defined below) of a normed space could contain vectors of infinite norm hence not be themselves proper normed spaces.
However these infinities are harmless if one restricts to balls of fine radius.
In fact, inside any ball of finite radius these improper normed spaces are completely standard.

\begin{example}\label{ex_normed_spaces}
Let $S=\mathds{R}\cup\{\pm\infty\}$.
Let ${\EuScript L}_{\sf H}$ be the language of real (or complex) vector spaces.
The language ${\EuScript L}$ contains a function symbol $\|\mbox{-}\|$ of sort ${\sf H}\to {\sf S}$.
Normed spaces are standard structures with the natural interpretation of the language.
It is easy to verify that the unit ball of a standard structure is the unit ball of a normed space.
Note that the same remains true if we add to the language any continuous (equivalently, bounded) operator or functional.
\end{example}

The following example is particularly interesting and will be discussed in detail in subsequent work.

\begin{example}
  Let $G,S$ be an equicontinuous $G$-flow as defined in~\cite{A}.
  Let ${\EuScript L}_{\sf H}$ be the language of groups with possibly some additional symbols.
  The only relevant function in ${\EuScript L}$ is the group action.
  Then $\langle G,S\rangle$ is a standard structure.
\end{example}

\section{Compactness}\label{compactness}

\def\ceq#1#2#3{\parbox[t]{18ex}{$\displaystyle #1$}\parbox[t]{6ex}{\hfil $#2$}{$\displaystyle #3$}}

In this section we prove the compactness theorem for positive theories. 
But first we introduce the notion of standard structure.
This allows to derive the positive case from its classical counterpart.
We will be sketchy and refer the reader to~\cite{clcl} for details (though we work with a larger ${\EuScript L}$, the argument is similar).

We recall the notion of standard part of an element of the elementary extension of a compact Hausdorff topological space.
Our goal is to prove Lemma~\ref{lem_st} which in turn is required for the proof of the positive compactness theorem (Theorem~\ref{thm_compattezza}).
The reader willing to accept the latter without proof may skip this section.

Let $\langle N, {}^*\!\!S\rangle$ be an ${\EuScript L}$-structure. 
Let $\eta$ be a free variable of sort ${\sf S}$.
For each $\beta\in S$, we define the type

\ceq{\hfill{\rm m}_\beta(\eta)}{=}{\{\eta\in D\ :\  D \textrm{ compact neighborhood of }\beta\}.}

In nonstandard analysis, the set of the realizations of ${\rm m}_\beta(\eta)$ in ${}^*\!\!S$ is called the monad of $\beta$. 

The compactness of $S$ ensures that for every $\alpha\in{}^*\!\!S$ there is a unique $\beta\in S$ such that ${}^*\!\!S\models{\rm m}_\beta(\alpha)$.
We denote by \emph{${\rm st}(\alpha)$\/} the unique $\beta\in S$ such that ${}^*\!\!S\models{\rm m}_\beta(\alpha)$.
The following fact is clear.

\begin{fact}\label{fact_st1}
  For every $\alpha\in{}^*\! S$ and every compact $C\subseteq S$

\ceq{\hfill{}^*\!\!S}{\models}{\alpha\in C\ \rightarrow\ {\rm st}(\alpha)\in C.}
\end{fact}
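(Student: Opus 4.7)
The plan is to argue by contrapositive: fix $\alpha \in {}^*\!S$, let $\beta = {\rm st}(\alpha)$, and show that if $\beta \notin C$ then ${}^*\!S \models \alpha \notin C$. Once we have this, the fact follows immediately from the definition of ${\rm st}(\alpha)$ as the (unique) element whose monad contains $\alpha$.

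The key step is a separation argument using the topological hypotheses on $S$. Since $S$ is compact Hausdorff, it is normal, and in particular regular. The singleton $\{\beta\}$ and the compact set $C$ are disjoint closed subsets of $S$, so by regularity we can find a compact neighborhood $D$ of $\beta$ that is disjoint from $C$. Crucially, $D$ is among the data for which ${\EuScript L}_{\sf S}$ supplies a symbol, and so does $C$. Because $D$ is a neighborhood of $\beta$, the formula $\eta \in D$ belongs to the type ${\rm m}_\beta(\eta)$, so ${}^*\!S \models \alpha \in D$ by definition of ${\rm st}(\alpha)$.

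Now the disjointness of $C$ and $D$ is an elementary fact about $S$: the sentence $\forall\raisebox{1.1ex}{\scaleto{\sf S}{.8ex}\kern-.2ex}\eta\ \neg(\eta\in C \wedge \eta\in D)$ holds in $S$. Since ${}^*\!S$ is an ${\EuScript L}$-structure (sharing the theory of $S$ in the ${\sf S}$-sort, as the interpretation of each symbol of ${\EuScript L}_{\sf S}$ is constrained by this theory), this sentence transfers to ${}^*\!S$. Combining with ${}^*\!S \models \alpha \in D$ forces ${}^*\!S \models \alpha \notin C$, which is the required conclusion.

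The only subtlety (hardly an obstacle) is the topological separation step: one must use compactness of $S$ to pass from disjoint closed sets to a compact, rather than merely open, neighborhood $D$ of $\beta$. This is standard — in a compact Hausdorff space, every point has a neighborhood base of compact sets, and combined with normality one can shrink an open separator to a compact one. Everything else is a direct unwinding of the definition of the monad and the elementary transfer of a purely ${\EuScript L}_{\sf S}$-atomic statement.
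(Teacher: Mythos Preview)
Your argument is correct and is precisely the natural elaboration the paper has in mind; the paper itself offers no proof, simply declaring the fact ``clear.'' The contrapositive via a compact neighborhood $D$ of $\beta={\rm st}(\alpha)$ disjoint from $C$, followed by transfer of the disjointness sentence, is exactly the intended reasoning.

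One small caveat on your justification of the transfer step: you write that ${}^*\!S$ ``shares the theory of $S$ in the ${\sf S}$-sort, as the interpretation of each symbol of ${\EuScript L}_{\sf S}$ is constrained by this theory.'' Strictly speaking, Definition~\ref{def_0} does not impose this; an arbitrary ${\EuScript L}$-structure is only required to interpret the mixed-sort function symbols equicontinuously. However, the paper's very assertion (just before the fact) that every $\alpha\in{}^*\!S$ has a unique standard part already presupposes that ${}^*\!S$ models enough of the ${\EuScript L}_{\sf S}$-theory of $S$ (e.g.\ the finite intersection property for the predicates $C$), and in every application of the fact (Corollary~\ref{corol_st}, Theorem~\ref{thm_compattezza}) $\langle N,{}^*\!S\rangle$ is taken to be an ${\EuScript L}$-elementary extension of a standard structure. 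So your transfer of $\forall\eta\,\neg(\eta\in C\wedge\eta\in D)$ is legitimate in context; just be aware that it rests on this implicit elementarity rather than on the bare definition of ${\EuScript L}$-structure.
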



We also need the following.

\begin{lemma}\label{fact_terms_st}
  For every $a\in N^n$, $\alpha\in({}^*\!\!S)^m$ and every term $\tau$ of the right sort

  \ceq{\hfill\langle N,{}^*\!\!S\rangle}{\models}{{\rm st}\big(\tau(a,\alpha)\big)={\rm st}\big(\tau\big(a,{\rm st}(\alpha)\big)\big).}
\end{lemma}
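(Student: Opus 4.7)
The plan is to proceed by induction on the structure of the term $\tau$, driven by the equicontinuity built into the definition of ${\EuScript L}$-structure. Write $\beta={\rm st}(\alpha)$ componentwise, so $\beta\in S^m$ sits inside $({}^*\!\!S)^m$ via the constants naming elements of $S$. Call two elements $\gamma,\gamma'\in{}^*\!\!S$ infinitesimally close, $\gamma\approx\gamma'$, when $\langle\gamma,\gamma'\rangle$ lies in every compact neighborhood of the diagonal of $S\times S$. Since $S$ is compact Hausdorff, the desired equality ${\rm st}(\tau(a,\alpha))={\rm st}(\tau(a,\beta))$ is equivalent to $\tau(a,\alpha)\approx\tau(a,\beta)$, and this is what I will prove.

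The base cases are immediate. A variable $\eta_i$ of sort ${\sf S}$ gives $\alpha_i$ on the left and $\beta_i={\rm st}(\alpha_i)$ on the right, and these are infinitesimally close by the definition of ${\rm st}$; a constant from $S$ yields the same value on both sides. Variables and subterms of sort ${\sf H}$ never involve $\alpha$, because ${\EuScript L}$ has no function symbols whose output sort is ${\sf H}$ and whose input involves ${\sf S}$, so they contribute identical values on the two sides and can be treated as parameters.

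For the inductive step, consider $\tau=f\big(\vec t\,(a),\vec\sigma(a,\alpha)\big)$ with $f$ of sort ${\sf H}^{n'}\times{\sf S}^k\to{\sf S}$; the purely ${\sf S}$-sorted case $n'=0$ is subsumed, since continuity of symbols from ${\EuScript L}_{\sf S}$ is a special case of equicontinuity. By the induction hypothesis, $\sigma_i(a,\alpha)\approx\sigma_i(a,\beta)$ for each $i$. Fix an arbitrary compact neighborhood $\varepsilon$ of the diagonal of $S\times S$. Equicontinuity of $f$ supplies an open neighborhood $\delta$ of the diagonal of $S^k\times S^k$ such that $\langle f(b,\gamma),f(b,\gamma')\rangle\in\varepsilon$ whenever $b\in N^{n'}$ and $\langle\gamma,\gamma'\rangle\in\delta$. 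Shrinking $\delta$ to a compact subneighborhood $\delta'$ of product form $\{(\gamma,\gamma')\colon (\gamma_i,\gamma_i')\in\delta_1 \text{ for every } i\le k\}$, where $\delta_1$ is a compact neighborhood of the diagonal of $S\times S$, turns the equicontinuity condition into a first-order sentence in the language ${\EuScript L}$, hence one that holds in $\langle N,{}^*\!\!S\rangle$. The componentwise infinitesimal closeness of the $\sigma_i(a,\alpha)$ and $\sigma_i(a,\beta)$ places $\langle\vec\sigma(a,\alpha),\vec\sigma(a,\beta)\rangle$ inside every such $\delta'$, and the equicontinuity statement then yields $\langle\tau(a,\alpha),\tau(a,\beta)\rangle\in\varepsilon$. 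Since $\varepsilon$ was arbitrary, $\tau(a,\alpha)\approx\tau(a,\beta)$.

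The main obstacle is the uniform-space bookkeeping in the last step: one must know that on a compact Hausdorff space the compact neighborhoods of the diagonal form a base for the unique compatible uniformity, and that on a finite power such a base may be taken in product form. Both are standard facts, but they are the technical heart of the argument; once granted, equicontinuity transfers elementarily to ${}^*\!\!S$ through the relation symbols of ${\EuScript L}$ and the induction runs smoothly.
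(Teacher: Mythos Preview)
Your argument is correct and follows the same route as the paper: induction on the syntax of $\tau$, with the inductive step driven by the equicontinuity clause in the definition of ${\EuScript L}$-structure (the paper only writes out the function-symbol case and leaves the induction to the reader, while you spell out the base cases and the product-form entourage bookkeeping explicitly). Your reformulation via the infinitesimal-closeness relation $\approx$ is a harmless repackaging of the same computation.
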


\begin{proof}
  We prove the lemma for function symbols -- the general case follows easily by induction on the syntax of $\tau$.
  Let $\varepsilon$ and $\delta$ below be those given by some modulus of uniform continuity of $\langle N,{}^*\!\!S\rangle$.
  We know that  
  
  \ceq{\hfill \langle N,{}^*\!\!S\rangle}{\models}{\forall x,\eta,\eta'\big[\langle\eta,\eta'\rangle\in\delta\rightarrow \big\langle f(x,\eta),f(x,\eta')\big\rangle\in\varepsilon\big].}
  
  As $\langle N,{}^*\!\!S\rangle\models\big\langle\alpha,{\rm st}(\alpha)\big\rangle\in\delta$ for every $\delta$, we conclude that $\big\langle f(a,\alpha),f\big(a,{\rm st}(\alpha)\big)\big\rangle\in\varepsilon$ for every $\varepsilon$.
  Therefore $f(a,\alpha)$ and $f\big(a,{\rm st}(\alpha)\big)$ have the same standard part.
\end{proof}

The \emph{standard part of $\langle N,{}^*\!\!S\rangle$\/} is the standard structure $\langle N,S\rangle$ that interprets the symbols $f$ of sort ${\sf H}^n\times{\sf S}^m\to {\sf S}$ as the functions

\ceq{\hfill f^N(a,\alpha)}{=}{{\rm st}\big({}^*\!\!f(a,\alpha)\big)}\hfill for all $a\in N^n$, $\alpha\in S^{|\alpha|}$.

where ${}^*\!\!f$ is the interpretation of $f$ in $\langle N,{}^*\!\!S\rangle$.
By the fact above, when $n=0$ the definition agrees with the canonical interpretation of the functions in ${\EuScript L}_{\sf S}$.
Symbols in ${\EuScript L}_{\sf H}$ maintain the same interpretation.
Note that the formulas asserting that the functions ${}^*\!\!f(a,\mbox{-})$ are equicontinuous are in ${\EuScript F}^{\rm p}$.
Therefore the functions ${}^N\!f(a,\mbox{-})$ are also equicontinuous by Lemma~\ref{lem_st} below.

Easy induction proves the following.

\begin{fact}\label{fact_st2} 
  With the notation as above.
  For every $a\in N^n$ and $\alpha\in ({}^*\!\!S)^m$

  \ceq{\hfill \tau^N\big(a\,;{\rm st}(\alpha)\big)}{=}{{\rm st}\big({}^*\!\tau(a\,;\alpha)\big),}

  where $\tau^N$ and ${}^*\!\tau$ are interpretations of $\tau$ in $N$, respectively $\langle N,{}^*\!\!S\rangle$.
\end{fact}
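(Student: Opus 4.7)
The plan is to proceed by induction on the complexity of the term $\tau$. The base cases are direct. For a variable $\eta_i$ of sort ${\sf S}$, both sides reduce to ${\rm st}(\alpha_i)$. Constants in ${\EuScript L}_{\sf S}$ are interpreted identically in $\langle N,S\rangle$ and $\langle N,{}^*\!\!S\rangle$ (since ${\rm st}$ fixes $S$ pointwise, which is exactly the content of Fact~\ref{fact_st1} applied to singleton compact sets), so they pose no issue. Subterms of sort ${\sf H}$ do not involve the ${\sf S}$-sort at all and have the same interpretation in both structures, so there is nothing to prove for them.

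For the induction step, write $\tau=f(\sigma\,;\sigma')$ where $\sigma$ is a tuple of sort ${\sf H}$ subterms, $\sigma'$ is a tuple of sort ${\sf S}$ subterms, and $f$ is a function symbol of sort ${\sf H}^k\times{\sf S}^{\ell}\to{\sf S}$. By the inductive hypothesis applied componentwise to $\sigma'$ (and since $\sigma$ is sort ${\sf H}$, hence interpreted identically in both structures), we have
\[
\sigma'{}^N\big(a\,;{\rm st}(\alpha)\big)\ =\ {\rm st}\big({}^*\!\sigma'(a\,;\alpha)\big).
\]
Unfolding the definition of $f^N$ on inputs from $S$ gives
\[
\tau^N\big(a\,;{\rm st}(\alpha)\big)\ =\ f^N\big(\sigma(a)\,;\sigma'{}^N(a\,;{\rm st}(\alpha))\big)\ =\ {\rm st}\Big({}^*\!f\big(\sigma(a)\,;{\rm st}({}^*\!\sigma'(a\,;\alpha))\big)\Big).
\]
It remains to remove the inner ${\rm st}$: this is precisely what Lemma~\ref{fact_terms_st} provides, applied to the symbol $f$ interpreted in $\langle N,{}^*\!\!S\rangle$ with parameters $\sigma(a)$ and ${}^*\!\sigma'(a\,;\alpha)$. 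Substituting we obtain ${\rm st}\big({}^*\!f(\sigma(a)\,;{}^*\!\sigma'(a\,;\alpha))\big)={\rm st}\big({}^*\!\tau(a\,;\alpha)\big)$, closing the induction.

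There is no real obstacle here; the only thing to be careful about is that the definition of $f^N$ (and hence of $\tau^N$) expects sort-${\sf S}$ inputs from $S$ rather than from ${}^*\!\!S$, so the inductive hypothesis is exactly the bridge that makes the recursive unfolding legitimate, and Lemma~\ref{fact_terms_st} is what absorbs the nonstandard arguments into their standard parts at the outermost function symbol. The whole argument is really just the observation that ${\rm st}\circ{}^*\!f=f^N\circ({\rm id}\times{\rm st})$ iterated along the syntax tree of $\tau$.
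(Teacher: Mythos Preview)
Your proof is correct and is precisely the ``easy induction'' the paper alludes to: the paper gives no further details beyond that phrase, and your argument fills them in exactly as intended, using Lemma~\ref{fact_terms_st} at the inductive step to absorb the inner ${\rm st}$.
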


Now we move to positive formulas.


\begin{lemma}\label{lem_st}
  With the notation as above.
  Let $\varphi(x\,;\eta)\in{\EuScript F}^{\rm p}$, \  $a\in N^{|x|}$, and $\alpha\in({}^*\!\!S)^{|\eta|}$ be arbitrary then $\langle N,{}^*\!\!S\rangle\models\varphi(a\,;\alpha)$ implies that $N\models\varphi\big(a\,;{\rm st}(\alpha)\big)$
\end{lemma}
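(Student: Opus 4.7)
The plan is to proceed by induction on the complexity of positive formulas $\varphi$. The whole reason positivity enters is visible already at the atomic level: Fact~\ref{fact_st1} says standard parts of compact sets stay in the compact set, but this is a one-way implication and would fail for the complement, since $S^n \setminus C$ need not be compact. So positive connectives and quantifiers are the most one can hope to preserve.

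For the base case I would treat the two atomic forms separately. Atoms of type $r_\psi(x)$ involve only the home sort, so the claim is vacuous because both structures interpret ${\EuScript L}_{\sf H}$ identically. For atoms $\tau(x;\eta)\in C$ with $C\subseteq S^n$ compact, assume $\langle N,{}^*\!\!S\rangle\models\tau(a;\alpha)\in C$. By Fact~\ref{fact_st1} applied componentwise, ${\rm st}\big({}^*\!\tau(a;\alpha)\big)\in C$. By Fact~\ref{fact_st2}, this standard part equals $\tau^N\big(a;{\rm st}(\alpha)\big)$, so $N\models\tau(a;{\rm st}(\alpha))\in C$.

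The inductive cases for $\wedge$ and $\vee$ are immediate from the induction hypothesis. For quantifiers of sort ${\sf H}$ there is nothing subtle either: the home sort domain is $N$ in both structures, so an existential witness (resp.\ an arbitrary element) transfers directly, and the induction hypothesis is applied with the same tuple of home-sort parameters. For the existential quantifier $\exists\eta'\,\varphi(x;\eta',\eta)$ of sort ${\sf S}$, a witness $\beta\in{}^*\!\!S$ in $\langle N,{}^*\!\!S\rangle$ yields, by induction, $N\models\varphi\big(a;{\rm st}(\beta),{\rm st}(\alpha)\big)$, and ${\rm st}(\beta)\in S$ is a legitimate witness in $N$.

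The only step that requires a moment's thought is the universal quantifier of sort ${\sf S}$. Suppose $\langle N,{}^*\!\!S\rangle\models\forall\eta'\,\varphi(a;\eta',\alpha)$ and pick an arbitrary $\beta\in S$. Because every element of $S$ is named by a constant of ${\EuScript L}_{\sf S}$, $\beta$ is in particular an element of ${}^*\!\!S$, and it is its own standard part. Instantiating the universal statement at $\beta$ gives $\langle N,{}^*\!\!S\rangle\models\varphi(a;\beta,\alpha)$, and the induction hypothesis yields $N\models\varphi\big(a;{\rm st}(\beta),{\rm st}(\alpha)\big)=\varphi\big(a;\beta,{\rm st}(\alpha)\big)$. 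Since $\beta\in S$ was arbitrary, $N\models\forall\eta'\,\varphi(a;\eta',{\rm st}(\alpha))$. The only potential obstacle is making sure $S\subseteq{}^*\!\!S$ so that this instantiation is legal; this is guaranteed by the presence of constants for each element of $S$ in ${\EuScript L}_{\sf S}$ and the fact that $\langle N,{}^*\!\!S\rangle$ is an ${\EuScript L}$-structure.
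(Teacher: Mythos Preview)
Your proof is correct and follows the same approach as the paper: induction on positive formulas, with the atomic case handled via Facts~\ref{fact_st1} and~\ref{fact_st2}. The paper dismisses the inductive steps with ``Induction is immediate,'' whereas you spell them out; your treatment of the universal quantifier of sort ${\sf S}$ (using $S\subseteq{}^*\!S$ via the constants and that ${\rm st}$ is the identity on $S$) is exactly the argument the reader is meant to supply.
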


\begin{proof}
  Suppose $\varphi(x\,;\eta)$ is ${\EuScript F}^{\rm p}$-atomic.
  If $\varphi(x\,;\eta)$ is a formula of ${\EuScript L}_{\sf H}$ the claim is trivial. 
  Otherwise, assume that $\varphi(x\,;\eta)$ has the form $\tau(x\,;\eta)\in C$.
  Assume that the tuple $\tau(x\,;\eta)$ consists of a single term.
  The general case follows easily from this special case. 
  Assume that $\langle N,{}^*\!\!S\rangle\models \tau(a\,;\alpha)\in C$.
  Then ${\rm st}\big({}^*\tau(a\,;\alpha)\big)\in C$ by Fact~\ref{fact_st1}.
  Therefore $\tau^N\big(a\,;{\rm st}(\alpha)\big)\in C$ follows from Fact~\ref{fact_st2}.
  This proves the lemma for atomic formulas.
  Induction is immediate. 
\end{proof}

\begin{corollary}\label{corol_st}
  Let $M$ be a standard structure.
  Let $p(\eta)\subseteq{\EuScript F}^{\rm p}(M)$ be a type that does not contain existential quantifiers of sort ${\sf H}$.
  Then, if $p(\eta)$ is finitely consistent in $M$, it is realized in $M$.
 \end{corollary}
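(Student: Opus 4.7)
The plan is to realize $p$ in a suitable elementary extension of $M$, take the standard part of the realizing tuple to obtain a point of $S^{|\eta|}$, and then use the absence of existential ${\sf H}$-quantifiers to transfer satisfaction of $p$ from that standard part back into $M$ itself.

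Write $M=\langle N,S\rangle$ and introduce a tuple $c$ of fresh constants of sort ${\sf S}$ of length $|\eta|$. Finite consistency of $p$ in $M$ is precisely the statement that ${\rm Th}_{\EuScript L}(M)\cup p(c)$ is finitely satisfiable, so by the classical compactness theorem for ${\EuScript L}$ there is an ${\EuScript L}$-elementary extension $\langle N^{*},{}^*\!\!S\rangle\succeq M$ together with a tuple $\alpha\in({}^*\!\!S)^{|\eta|}$ interpreting $c$ such that $\langle N^{*},{}^*\!\!S\rangle\models p(\alpha)$. Applying Lemma~\ref{lem_st} with home sort $N^{*}$, and using that every formula of $p$ is positive, one obtains that the standard part $\langle N^{*},S\rangle$ satisfies $p({\rm st}(\alpha))$. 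The tuple ${\rm st}(\alpha)$ already lives in $S^{|\eta|}$, hence inside $M$.

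The remaining step is the descent from $\langle N^{*},S\rangle$ down to $M=\langle N,S\rangle$. This I would carry out by a short induction on the syntax of a positive, $\exists^{\sf H}$-free formula $\varphi(\eta)$ with parameters from $M$: the two structures share the same ${\sf S}$-sort $S$ and the same parameters, so atomic formulas and quantifiers of sort ${\sf S}$ are immediate, while a universal ${\sf H}$-quantifier over $N^{*}$ in particular quantifies over $N$ and hence restricts correctly to the smaller structure. Applied to each $\varphi\in p$ at the point ${\rm st}(\alpha)$, this yields $M\models p({\rm st}(\alpha))$, so ${\rm st}(\alpha)$ is the desired realization.

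The only subtle point is that this descent genuinely needs the hypothesis on $p$: had a formula of $p$ contained $\exists^{\sf H}$, one could only guarantee a realization in the proper standard extension $\langle N^{*},S\rangle$ and not in $M$ itself. Everything else is essentially forced by Lemma~\ref{lem_st} together with the compactness of $S$, the latter being what makes ${\rm st}(\alpha)$ well-defined as a point of $S^{|\eta|}$.
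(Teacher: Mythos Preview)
Your proof is correct and follows essentially the same route as the paper: realize $p$ in an ${\EuScript L}$-elementary extension, apply Lemma~\ref{lem_st} to pass to the standard part at ${\rm st}(\alpha)\in S^{|\eta|}$, and then descend to $M$ using that positive formulas without $\exists\raisebox{1.1ex}{\scaleto{\sf H}{.8ex}}$ are preserved under passage to substructures. The paper compresses your inductive descent into the single observation that such formulas are preserved by substructures, and uses a saturated extension rather than an ad hoc compactness argument, but these are cosmetic differences.
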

 
 The corollary has also a direct proof.
 This goes through the observation that the formulas in $p(\eta)$ define compact subsets of $S$.

 \begin{proof}
  Let $\langle ^*\!M,^*\!\!S\rangle$ be an ${\EuScript L}$-elementary saturated superstructure of $\langle M,S\rangle$.
  Pick some $\alpha\in{}^*\!\!S$ such that $\langle ^*\!M,^*\!\!S\rangle\models p(\alpha)$.
  By Lemma~\ref{lem_st}, $^*\!M\models p({\rm st}(\alpha))$.
  Now observe that the truth of formulas without the quantifier $\exists\raisebox{1.1ex}{\scaleto{\sf H}{.8ex}}$ are preserved by substructures.
 \end{proof}
 
 The exclusion of existential quantifiers of sort ${\sf H}$ is necessary.
 For a counterexample take $M=S=[0,1]$.
 Assume that ${\EuScript L}$ contains a function symbol for the identity map $\iota:M\to S$.
 Let $p(\eta)$ contain the formulas $\exists x\,\big(x\neq0\,\wedge\;\iota x+\eta\in[0,1/n]\big)$ for all positive integers $n$.



\hfil***

We are ready for the proof of the positive compactness theorem.
It is convenient to distinguish between consistency with respect to standard structures and consistency with respect to ${\EuScript L}$-structures.
We say that a theory $T$ is \emph{${\EuScript L}$-consistent\/} if $\langle M,{}^*\!\!S\rangle\models T$ for some ${\EuScript L}$-structure $\langle M,{}^*\!\!S\rangle$ (this is the classical notion of consistency).
We say that $T$ is \emph{uniformly finitely\/} ${\EuScript L}$-consistent if every finite $T_0\subseteq T$ is ${\EuScript L}$-consistent and the ${\EuScript L}$-structures witnessing this have a common modulus of equicontinuity.

Finally, we say that $T$ is \emph{standardly\/} consistent if $M\models T$ for some standard structure $M$.
By Lemma~\ref{lem_st} these two notions of consistency coincide when $T$ is positive.
Therefore we have the following.

\begin{theorem}[positive compactness theorem]\label{thm_compattezza}
  Let $T$ be a positive theory. Then, if $T$ is uniformly finitely ${\EuScript L}$-consistent, it is also standardly consistent.
\end{theorem}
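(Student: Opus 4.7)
The strategy is to reduce positive compactness to classical first-order compactness by first-orderizing equicontinuity via the given uniform modulus, and then to descend to a standard structure through the standard-part construction together with Lemma~\ref{lem_st}.

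First I would enlarge $T$ to a first-order theory $T^+$ by adjoining, for each function symbol $f$ of mixed sort ${\sf H}^n\times{\sf S}^m\to{\sf S}$ and each closed neighborhood $\varepsilon$ of the diagonal of $S\times S$, the sentence
\[\forall x\,\forall \eta\,\forall \eta'\;\big[\langle\eta,\eta'\rangle\in\mu(\varepsilon)\ \to\ \big\langle f(x,\eta),\,f(x,\eta')\big\rangle\in\varepsilon\big],\]
where $\mu$ is the common modulus of equicontinuity supplied by the hypothesis. Any ${\EuScript L}$-model of a finite $T_0\subseteq T$ that has modulus $\mu$ automatically satisfies the corresponding finite fragment of this schema, so $T^+$ is classically finitely ${\EuScript L}$-consistent. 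The ordinary compactness theorem then yields a structure $\langle N,{}^*\!S\rangle\models T^+$, and the added axioms guarantee that its mixed-sort symbols really are equicontinuous, so that $\langle N,{}^*\!S\rangle$ is a legitimate ${\EuScript L}$-structure.

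Next I would form the standard part $\langle N,S\rangle$ as defined just before Lemma~\ref{lem_st}, reinterpreting each mixed-sort symbol by $f^N(a,\alpha)={\rm st}\big({}^*\!f(a,\alpha)\big)$ for $a\in N^n$, $\alpha\in S^m$. The map ${\rm st}\colon{}^*\!S\to S$ is well-defined using only the compactness and Hausdorffness of $S$ (saturation, which was needed in Corollary~\ref{corol_st} to realize a type, is not needed here to define standard parts), so $\langle N,S\rangle$ is a standard ${\EuScript L}$-structure.

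The final step is a one-line appeal to Lemma~\ref{lem_st}: every $\varphi\in T$ is a positive sentence, so $\langle N,{}^*\!S\rangle\models\varphi$ immediately gives $\langle N,S\rangle\models\varphi$; hence $\langle N,S\rangle\models T$. No step is genuinely hard; the only real content is the first one, where the word ``uniformly'' in the hypothesis is used to turn equicontinuity into a first-order schema whose finite fragments are witnessed by the original models of finite fragments of $T$. Without that uniformity the classical compactness call would produce an ${\EuScript L}$-structure whose mixed-sort functions need not be equicontinuous, and the standard-part construction would then break down.
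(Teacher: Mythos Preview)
Your argument is correct and is essentially the paper's own proof spelled out in more detail: adjoin the equicontinuity schema supplied by the common modulus, invoke classical compactness to get an ${\EuScript L}$-structure $\langle N,{}^*\!S\rangle$, pass to its standard part, and apply Lemma~\ref{lem_st} sentence by sentence. The only cosmetic difference is that the paper absorbs the schema into $T$ itself (noting, as in the paragraph preceding Lemma~\ref{lem_st}, that the equicontinuity sentences are already in ${\EuScript F}^{\rm p}$), whereas you keep $T^+$ separate and only need positivity for the original sentences of $T$; either packaging works.
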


\begin{proof}
  By uniform finite consistency, we can assume without loss of generality that $T$ contains some positive sentences asserting that the interpretations of the function of sort ${\sf H}^n\times{\sf S}^m\to{\sf S}$ are equicontinuous.
  Therefore, by the classical compactness theorem, $T$ is ${\EuScript L}$-consistent.
  Then the theorem follows from Lemma~\ref{lem_st}
\end{proof}





An ${\EuScript L}$-structure $N$ is \emph{positively $\lambda$-saturated\/} if it realizes all types $p(x\,;\eta)\subseteq{\EuScript F}^{\rm p}(N)$ that are finitely consistent in $N$ and have fewer than $\lambda$ parameters.
When $\lambda=|N|$ we simply say \emph{p-saturated.}

We say that $M$ is a \emph{p/c-elementary\/} substructure of $N$ if $M\subseteq N$ and

\ceq{\hfill M\models\varphi(a)}{\Rightarrow}{N\models\varphi(a)}

for every $\varphi(x)\in{\EuScript F}^{\rm p}$ and every $a\in M^{|x|}$.

We say that $M$ is \emph{p/c-maximal\/} if it models all sentences in ${\EuScript F}^{\rm p/c}(M)$ that holds in some p-elementary extension of $M$.

\begin{theorem}
  Every standard structure has a p-elementary extension to a p-satu\-rated p-maximal standard structure.
\end{theorem}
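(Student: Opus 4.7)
The plan is a standard transfinite iteration built on the positive compactness theorem (Theorem~\ref{thm_compattezza}). Fix a regular cardinal $\kappa>|M|+|{\EuScript L}|$ with enough closure (e.g.\ $\kappa^{<\kappa}=\kappa$) to keep the construction of size $\kappa$. I build a $\preceq^{\rm p}$-chain $\langle M_\alpha:\alpha\leq\kappa\rangle$ of standard structures with $M_0:=M$, so that $N:=M_\kappa$ is both p-saturated and p-maximal. A bookkeeping device enumerates the tasks addressed at successor stages: at each $\alpha+1$, one either (i) realizes a positive type $p(x;\eta)\subseteq{\EuScript F}^{\rm p}(M_\alpha)$ of cardinality $<\kappa$ finitely consistent in $M_\alpha$, or (ii) makes $M_{\alpha+1}$ satisfy a positive sentence $\sigma$ with parameters in $M_\alpha$ that is known to hold in \textit{some} p-elementary extension of $M_\alpha$.

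Both successor tasks reduce to Theorem~\ref{thm_compattezza}. Write $D(M_\alpha)$ for the positive diagram of $M_\alpha$: all positive sentences with parameters from $M_\alpha$ that are true in $M_\alpha$, together with the positive sentences recording the equicontinuity modulus of each function symbol of $M_\alpha$. For~(i), the theory $D(M_\alpha)\cup p(c;\gamma)$ (with fresh constants $c,\gamma$) is uniformly finitely ${\EuScript L}$-consistent because any finite fragment is realized in $M_\alpha$ itself by finite consistency of $p$, with the equicontinuity modulus of $M_\alpha$; Theorem~\ref{thm_compattezza} produces a standard model $M_{\alpha+1}$ in which $M_\alpha\preceq^{\rm p}M_{\alpha+1}$ and $p$ is realized. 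For~(ii), the extension witnessing $\sigma$ already exhibits $D(M_\alpha)\cup\{\sigma\}$ as uniformly finitely ${\EuScript L}$-consistent, and Theorem~\ref{thm_compattezza} again yields a standard $M_{\alpha+1}$.

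At a limit $\lambda\leq\kappa$ set $M_\lambda:=\bigcup_{\alpha<\lambda}M_\alpha$ with $S$ unchanged; function symbols have a coherent interpretation across the chain because atomic sentences $f(a;\beta)\in\{\gamma\}$ are preserved by $\preceq^{\rm p}$, and equicontinuity survives because it is expressible positively. The real work is to check that $M_\alpha\preceq^{\rm p}M_\lambda$ for every $\alpha<\lambda$. This goes by induction on the positive formula $\varphi$, simultaneously in $\alpha$: atomic, $\wedge$, $\vee$, and $\exists$ cases are immediate; for $\forall x\,\psi(x,a)$ with $a\in M_\alpha$ and $b\in M_\lambda$, pick $\beta$ with $\alpha\leq\beta<\lambda$ and $b\in M_\beta$, apply $M_\alpha\preceq^{\rm p}M_\beta$ to the full formula $\forall x\,\psi(x,a)$ to get $M_\beta\models\psi(b,a)$, and then use the inductive hypothesis for $\psi$ at stage $\beta$ to conclude $M_\lambda\models\psi(b,a)$.

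Regularity of $\kappa$ then ensures that every positive type $p$ over $N$ of size $<\kappa$ has its parameters in some $M_\alpha$; a cofinality argument finds $\beta<\kappa$ for which $p$ is already finitely consistent in $M_\beta$, so by the bookkeeping it is realized at some later stage, hence in $N$, yielding p-saturation. Similarly any positive sentence $\sigma$ over $N$ that holds in a p-elementary extension of $N$ has its parameters in some $M_\alpha$ and still holds in a p-elementary extension of $M_\alpha$, so it is handled by a task of type~(ii) at some stage and hence holds in $N$, giving p-maximality. \textbf{The main obstacle} is the limit step: positive $\forall$-formulas are not preserved downward along $\preceq^{\rm p}$, so $M_\alpha\preceq^{\rm p}M_\lambda$ is not automatic from the chain. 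The simultaneous induction on formula complexity and chain index dodges this by first going up to an intermediate $M_\beta$ before invoking the hypothesis on a simpler formula; the remaining content is routine bookkeeping around Theorem~\ref{thm_compattezza}.
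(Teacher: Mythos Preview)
Your overall strategy is the same as the paper's---a transfinite $\preceq^{\rm p}$-chain driven by positive compactness---but there is a real gap in your verification of p-saturation. You assert that ``a cofinality argument finds $\beta<\kappa$ for which $p$ is already finitely consistent in $M_\beta$,'' and this does not follow. Given $p\subseteq{\EuScript F}^{\rm p}(M_\alpha)$ finitely consistent in $N$, a finite $p_0\subseteq p$ has a witness $a\in N$ with $N\models\bigwedge p_0(a)$, and $a\in M_\gamma$ for some $\gamma<\kappa$; but $M_\gamma\preceq^{\rm p}N$ only gives preservation \textit{upward}, so you cannot conclude $M_\gamma\models\bigwedge p_0(a)$. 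This is precisely the asymmetry you flagged as the ``main obstacle'' at limit stages, and it bites you again here.

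The repair is available within your framework: for each finite $p_0$, the sentence $\sigma_{p_0}=\exists x\,\bigwedge p_0(x)$ holds in $N\succeq^{\rm p}M_\alpha$, hence in some p-elementary extension of $M_\alpha$, hence is eventually handled by a task of type~(ii); once $M_{\gamma_{p_0}+1}\models\sigma_{p_0}$, upward preservation and regularity over the $<\kappa$ many finite subsets give a single $\beta$ with $p$ finitely consistent in $M_\beta$. So your two-task setup can be made to work, but the interplay between (i) and (ii) must be spelled out. The paper sidesteps this by using a single task: at stage $i$, realize types $p\subseteq{\EuScript F}^{\rm p}(M_i)$ with $<\lambda$ parameters that are finitely consistent in \textit{some p-elementary superstructure} of $M_i$ (not in $M_i$ itself). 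Then any $p$ over $N$ finitely consistent in $N$ is automatically of this kind once its parameters land in $M_i$, since $N$ itself is such a superstructure; and p-maximality falls out as the special case $|x|=0$, so no separate task~(ii) is needed.
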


\begin{proof}
  Up to a small tweak, this is just the classical construction.
  First note that the moduli of equicontinuity for a standard structure $M$ are also moduli of equicontinuity for any of its p-elementary extensions.
  Now, suppose that every finite subset of $p(x,\eta)\subseteq{\EuScript F}^{\rm p}(M)$ is realized in some p-elementary extension of $M$.
  Then, by the positive compactness theorem, $p(x,\eta)$ is realized in some p-elementary extension of $M$.

  With this in mind we construct a p-elementary chain of length $\lambda$ of standard structures $M_i$ of cardinality $\le\lambda$ and a sequence of types $p_i(x)\subseteq{\EuScript F}^{\rm p}(M_i)$ with $<\lambda$ parameters that are realized in $M_{i+1}$.
  If $\lambda=\lambda^{<\lambda}$ we can ensure that all types finitely consistent in some p-elementary superstructure of $M_i$ occur in the enumeration.
  Let $N$ be the union of the chain.
  If $p(x)\subseteq{\EuScript F}^{\rm p}(N)$ is finitely consistent in $N$ and has $<\lambda$ parameters then, by regularity, it occurs at some stage $i<\lambda$. 
  Then $p(x)$ is realized in $M_{i+1}$ and by p-elementarity, in $N$.
  The claim of p-maximality is immediate.
\end{proof}



\section{Henson-Iovino approximations}\label{ultrapws} 
The notion of approximation is a central tool in the work of Henson and Iovino on the model theory of normed spaces, see e.g.~\cite{HI}.
In~\cite{clcl} the natural generalization to standard structures is presented.
We recall the definitions and the main properties.

For $C,C'$ compact subsets of $S^n$, we write $C'>C$ if $C'$ is a neighborhood of $C$.
For $\varphi,\varphi'$ (free variables are hidden) positive formulas possibly with parameters we write \emph{$\varphi'>\varphi$\/} if $\varphi'$ is obtained by replacing in $\varphi$ each atomic formula of the form $t\in C$ with $t\in C'$, for some $C'>C$.
If no such atomic formulas occurs in $\varphi$, then $\varphi>\varphi$.
We call $\varphi'$ a \emph{weakening\/} of $\varphi$.
Note that $>$ is a dense transitive relation and that  $\varphi\to\varphi'$ in every ${\EuScript L}$-structure.
For every type $p(x)$, we write 

\ceq{\hfill \emph{$p'(x)$}}{=}{\big\{\varphi'(x)\ :\ \varphi'>\varphi\textrm{ for some }\varphi(x)\in p\big\}}

in particular \emph{$\{\varphi(x)\}'$} = $\big\{\varphi'(x)\ :\ \varphi'>\varphi\big\}$.

We write \emph{$\tilde\varphi\perp\varphi$\/} when $\tilde\varphi$ is obtained by replacing each atomic formula $t\in C$ occurring in $\varphi$ with $t\in\tilde{C}$ where $\tilde{C}$ is some compact set disjoint from $C$.
The atomic formulas in ${\EuScript L}_{\sf H}$ are replaced with their negation.
Finally each connective is replaced by its dual i.e., $\vee, \wedge, \exists,\forall$ are replaced by $\wedge,\vee,\forall,\exists$, respectively.
We say that  $\tilde\varphi$ is a \emph{strong negation} of $\varphi$.
It is clear that $\tilde\varphi\rightarrow\neg\varphi$ in every ${\EuScript L}$-structure.

Notice that the weakening and the strong negation of a formula in ${\EuScript F}^{\rm c}$ is also in ${\EuScript F}^{\rm c}$.

\begin{lemma}\label{lem_interpolation}
  For all positive formulas $\varphi$
  \begin{itemize}
    \item[1.]for every $\varphi'>\varphi$ there is a formula $\tilde\varphi\perp\varphi$ such that $\varphi\rightarrow\neg \tilde\varphi\rightarrow\varphi'$;
    \item[2.] for every\, $\tilde\varphi\perp\varphi$ there is a formula $\varphi'>\varphi$ such that  $\varphi\rightarrow\varphi'\rightarrow\neg \tilde\varphi$.
  \end{itemize}
\end{lemma}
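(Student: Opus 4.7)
My approach is to prove both parts simultaneously by induction on the syntactic complexity of $\varphi$. All the topological content sits in the atomic case, while the inductive step is just the duality between $\wedge$/$\vee$ and $\exists$/$\forall$ that is already built into the definitions of $>$ and $\perp$.

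For atomic formulas of the form $t\in C$, I use that $S$ is compact Hausdorff, hence normal. For~(1), given $C'>C$ set $\tilde C:=S\setminus\mathrm{int}(C')$. Then $\tilde C$ is closed in $S$, hence compact, and disjoint from $C$ (because $C\subseteq\mathrm{int}(C')$), so $\tilde\varphi:=t\in\tilde C$ satisfies $\tilde\varphi\perp\varphi$; moreover $\neg\tilde\varphi$ says $t\in\mathrm{int}(C')$, whence $\varphi\to\neg\tilde\varphi\to\varphi'$. For~(2), given compact $\tilde C$ disjoint from $C$, normality provides disjoint open sets $U\supseteq C$ and $V\supseteq\tilde C$; take $C':=S\setminus V$. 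Then $C'$ is compact, contains the open set $U\supseteq C$ (so $C'>C$), and is disjoint from $\tilde C$, so $\varphi':=t\in C'$ satisfies $\varphi\to\varphi'\to\neg\tilde\varphi$. For atomic formulas from ${\EuScript L}_{\sf H}$ there is nothing to do: these are untouched by $>$ and simply negated by $\perp$, so~(1) is witnessed by $\tilde\varphi:=\neg\varphi$ and~(2) by $\varphi':=\varphi$.

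The inductive step is routine. For instance, if $\varphi=\exists x\,\psi$ then $\varphi'=\exists x\,\psi'$ for some $\psi'>\psi$ and $\tilde\varphi=\forall x\,\tilde\psi$ for some $\tilde\psi\perp\psi$; applying the inductive hypothesis to $\psi$ and prefixing quantifiers gives the required chain, using that $\exists x\,\neg\tilde\psi$ is logically equivalent to $\neg\forall x\,\tilde\psi$. The cases of $\forall$, $\wedge$, $\vee$, and the ${\sf S}$-sorted quantifiers are analogous. The only substantive work is thus the atomic case $t\in C$, where the ability to squeeze a compact $\tilde C$ between $C$ and the complement of $C'$ (and dually) rests entirely on the normality of $S$; the remainder is bookkeeping that the definitions of $>$ and $\perp$ were designed to make transparent.
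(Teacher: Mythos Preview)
Your proof is correct and follows essentially the same approach as the paper: handle the atomic case $t\in C$ using normality of the ambient compact Hausdorff space, note that the ${\EuScript L}_{\sf H}$ atomic case is trivial, and then induct on the connectives and quantifiers using the built-in duality between $>$ and $\perp$. Your choices $\tilde C=S\setminus\mathrm{int}(C')$ for~(1) and $C'=S\setminus V$ for~(2) are concrete instantiations of the paper's more tersely stated constructions.
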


\begin{proof}
  If $\varphi\in {\EuScript L}_{\sf H}$ the claims are obvious.
  Suppose $\varphi$ is of the form $t\in C$.
  Let $\varphi'$ be $t\in C'$, for some $C'>C$.
  Let $O$ be an open set such that $C\subseteq O\subseteq C'$.
  Then $\tilde\varphi=(t\in S\smallsetminus O)$ is as required by the lemma.
  Suppose instead that $\tilde\varphi$ is of the form $t\in\tilde{C}$ for some compact $\tilde{C}$ disjoint from $C$.
  By the normality of $S$, there is  $C'>C$ disjoint from $\tilde{C}$.
  Then  $\varphi'=\big(t\in C'\big)$ is as required.
  The lemma follows easily by induction.
\end{proof}

\begin{definition}\label{def_dense}
  A set of formulas ${\EuScript H}\subseteq{\EuScript L}$ is \emph{p/c-dense\/} modulo $T$, a theory, if for every positive/continuous $\varphi(x)$ and every $\varphi'>\varphi$, there is a formula $\psi(x)\in{\EuScript H}$ such that $\varphi(x)\rightarrow\psi(x)\rightarrow\varphi'(x)$ holds in every standard structure that models $T$.
\end{definition}

\begin{example}
  By Lemma~\ref{lem_interpolation}, the set of negations of positive/continuous formulas is p/c-dense.
\end{example}

The following fact is easily proved by induction.

\begin{fact}
  Let ${\EuScript H}$ be a set of formulas closed under the connectives $\wedge$, $\vee$, $\forall\raisebox{1.1ex}{\scaleto{\sf H}{.8ex}\kern-.2ex}$, $\exists\raisebox{1.1ex}{\scaleto{\sf H}{.8ex}\kern-.2ex}$, $\forall\raisebox{1.1ex}{\scaleto{\sf S}{.8ex}\kern-.2ex}$, $\exists\raisebox{1.1ex}{\scaleto{\sf S}{.8ex}\kern-.2ex}$.
  Then ${\EuScript H}$ is p/c-dense if the condition in Definition~\ref{def_dense} is satisfied when $\varphi(x)$ atomic.
\end{fact}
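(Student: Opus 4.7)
The plan is a straightforward induction on the complexity of $\varphi(x) \in {\EuScript F}^{\rm p/c}$. The base case---$\varphi$ atomic---is exactly the hypothesis. For the inductive step, I would exploit the compositional nature of the weakening operation: since $\varphi \mapsto \varphi'$ acts by replacing each atomic subformula $t \in C$ by $t \in C'$ with $C' > C$, every weakening of a formula whose outermost symbol is a connective or quantifier has the same outermost symbol, with its immediate subformulas replaced by weakenings of the original immediate subformulas.

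Concretely, if $\varphi = \varphi_1 \wedge \varphi_2$ and $\varphi' > \varphi$, then $\varphi' = \varphi_1' \wedge \varphi_2'$ with $\varphi_i' > \varphi_i$; applying the inductive hypothesis to each pair yields $\psi_i \in {\EuScript H}$ with $\varphi_i \to \psi_i \to \varphi_i'$, and closure of ${\EuScript H}$ under $\wedge$ gives $\psi_1 \wedge \psi_2 \in {\EuScript H}$ as the interpolant between $\varphi$ and $\varphi'$. The disjunction case is symmetric. For $\varphi = \exists x\,\chi(x,y)$ and $\varphi' = \exists x\,\chi'(x,y)$ with $\chi' > \chi$, the inductive hypothesis applied to $(\chi, \chi')$ produces $\psi(x,y) \in {\EuScript H}$ with $\chi \to \psi \to \chi'$; closure under $\exists x$ then yields $\exists x\,\psi \in {\EuScript H}$ as the required interpolant. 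The cases of $\forall x$, $\exists \eta$, and $\forall \eta$ are handled identically, each invoking the corresponding closure hypothesis on ${\EuScript H}$.

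I do not anticipate a serious obstacle, since the closure conditions on ${\EuScript H}$ have been tailored to match exactly the formation rules for positive formulas, making the induction essentially mechanical. The one point worth flagging is that the same induction simultaneously covers the continuous case: a weakening of a continuous formula $t \in C$ remains of the form $t \in C'$, and no atomic formulas of the form $r_\varphi$ are ever introduced by the construction of the interpolants, so the invariant ``$\psi \in {\EuScript H} \cap {\EuScript F}^{\rm c}$'' is preserved at every inductive step. Hence one induction delivers both the p-dense and the c-dense conclusions, as needed. The relativisation to $T$ adds nothing: each implication produced by the induction is obtained from implications furnished by the hypothesis or from tautologies of the connectives, so it holds in every standard structure modelling $T$.
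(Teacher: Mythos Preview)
Your proposal is correct and matches the paper's approach exactly: the paper simply states that the fact ``is easily proved by induction'' without spelling out the cases, and your argument is precisely that induction. The only minor remark is that in the c-dense case you need not track the invariant $\psi\in{\EuScript H}\cap{\EuScript F}^{\rm c}$; the definition of c-density only requires $\psi\in{\EuScript H}$, and the restriction to continuous $\varphi$ merely means the induction ranges over formulas without $r_\varphi$-atoms---but this is a cosmetic point and does not affect your argument.
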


Using the fact, it is immediate to verify the following.

\begin{example}\label{ex_prebase}
  Let ${\EuScript C}$ be a prebase of closed subsets of $S$.
  The set of formulas built inductively from atomic formulas of the form $\tau(x,\eta)\in C$ for $C\in{\EuScript C}$ is c-dense.
\end{example}

\begin{example}
  Let $S=[0,1]$.
  The set of formulas built inductively from atomic formulas of the form $\tau=0$ is c-dense.
  In fact, first note that by Example~\ref{ex_prebase} we can restrict to formulas built inductively from atomic formulas of the form $\tau\in[\alpha,\beta]$.
  Then note that $\tau\in[\alpha,\beta]$ is equivalent to $\tau\dotminus\beta\in\{0\}\ \wedge\ \alpha\dotminus\tau\in\{0\}$.
\end{example}

In Section~\ref{cIelimination} we prove that the set of positive/continuous formulas without quantifiers of sort ${\sf S}$ is p/c-dense.

\hfil***

The following proposition shows that a slight amount of saturation tames the positive formulas.

\begin{proposition}\label{prop_approx}
  Let $N$ be a p-$\omega$-saturated standard structure.
  Then $N\models\{\varphi\}'\leftrightarrow\varphi$, for every $\varphi\in{\EuScript F}^{\rm p}(N)$.
\end{proposition}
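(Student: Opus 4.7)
The direction $\varphi \to \{\varphi\}'$ is immediate since $\varphi \to \varphi'$ holds in every ${\EuScript L}$-structure for each $\varphi' > \varphi$. So the real content is to show $N \models \{\varphi\}'$ implies $N \models \varphi$, and my plan is to prove this by induction on the structure of $\varphi$.

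For the base case, suppose $\varphi$ is atomic. If $\varphi \in {\EuScript L}_{\sf H}$, then $\{\varphi\}' = \{\varphi\}$ and there is nothing to prove. Otherwise $\varphi$ has the form $\tau(a;\alpha) \in C$ for some parameters $a, \alpha$ and some compact $C \subseteq S^n$. Since $S$ is compact Hausdorff, hence normal, every compact set equals the intersection of its compact neighborhoods: indeed, if $\beta \notin C$ there is an open $O \not\ni \beta$ with $O \supseteq C$, and by normality a compact $C' > C$ with $C' \subseteq O$. Thus $\tau^N(a;\alpha) \in C'$ for every $C' > C$ forces $\tau^N(a;\alpha) \in C$.

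For the inductive steps, the cases of $\wedge$ and $\forall$ (of either sort) are immediate from the induction hypothesis applied pointwise. The case $\varphi = \psi_1 \vee \psi_2$ is handled contrapositively: if $N \not\models \varphi$ then $N \not\models \psi_1$ and $N \not\models \psi_2$, so the induction hypothesis yields $\psi_1' > \psi_1$ and $\psi_2' > \psi_2$ with $N$ failing both; but $\psi_1' \vee \psi_2' > \varphi$, contradicting $N \models \{\varphi\}'$.

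The main step is the existential quantifier, say $\varphi(a) = \exists y\, \psi(a, y)$ (the sort ${\sf S}$ case is analogous). Assuming $N \models \{\varphi\}'$, I would form the positive type
\[
p(y) \;=\; \{\psi'(a, y) \;:\; \psi' > \psi\}
\]
with finitely many parameters from $a$. The key observation is that $p(y)$ is finitely consistent in $N$: given $\psi_1', \dots, \psi_k' > \psi$, replace each atomic $t \in C$ in $\psi$ by $t \in C'$ where $C'$ is a compact neighborhood of $C$ contained in $\bigcap_i C_i'$ (such a $C'$ exists by normality applied to the intersection, which is a neighborhood of $C$); the resulting $\psi' > \psi$ satisfies $\psi' \to \bigwedge_i \psi_i'$, so $\exists y\, \psi' > \varphi$ is realized in $N$ and witnesses $\exists y\, \bigwedge_i \psi_i'(a, y)$. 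By p-$\omega$-saturation some $b \in N$ realizes $p(y)$; the induction hypothesis applied to $\psi(a, b)$ then gives $N \models \psi(a, b)$, hence $N \models \varphi(a)$. The subtle point in this step, and the only place saturation is used, is precisely the common weakening construction, which depends on the normality of $S$ to produce $C'$ inside the intersection of the $C_i'$.
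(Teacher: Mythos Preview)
Your proof is correct and follows essentially the same approach as the paper's: induction on the structure of $\varphi$, with the existential case handled by forming the type of all weakenings, observing it is finitely consistent because the approximations are directed, and invoking p-$\omega$-saturation. You give more detail on the atomic and disjunction cases (which the paper leaves as ``clear'' and ``immediate''); one small remark is that the directedness step does not actually require normality, since a finite intersection of compact neighborhoods of $C$ is already a compact neighborhood of $C$.
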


\begin{proof}
  We prove $\rightarrow$, the nontrivial implication.
  The claim is clear for atomic formulas.
  Induction for conjunction, disjunction and the universal quantifiers is immediate.
 %
 %
%
%
%
%
%
  We consider case of the existential quantifiers of sort ${\sf H}$.
  Assume inductively
  
  \ceq{\textrm{ih.}\hfill\{\varphi(x,z\,;\eta)\}'}
  {\rightarrow}
  {\varphi(x,z\,;\eta)}

  We need to prove

  \ceq{\hfill\{\exists z\,\varphi(x,z\,;\eta)\}'}
  {\rightarrow}
  {\exists z\,\varphi(x,z\,;\eta)}

  From (ih) we have

  \ceq{\hfill\exists z\,\{\varphi(x,z\,;\eta)\}'}
  {\rightarrow}
  {\exists z\,\varphi(x,z\,;\eta)}

  Therefore it suffices to prove

  \ceq{\hfill\{\exists z\,\varphi(x,z\,;\eta)\}'}
  {\rightarrow}
  {\exists z\,\{\varphi(x,z\,;\eta)\}'}

Replace the variables $x\,;\eta$ with parameters, say $a\,;\alpha$, and assume that $N\models\exists z\,\varphi'(a,z,;\alpha)$ for every $\varphi'>\varphi$.
We need to prove the consistency of the type $\{\varphi'(a,z,;\alpha):\varphi'>\varphi\}$.
By saturation, finite consistency suffices.
This is clear if we show that the antecedent is closed under conjunction.
Indeed, it is easy to verify that if $\varphi_1,\varphi_2>\varphi$ then $\varphi_1\wedge\varphi_2>\varphi'$ for some $\varphi'>\varphi$.
In words, the set of approximations of $\varphi$ is a directed set.

For existential quantifiers of sort ${\sf S}$ we argue similarly.
%
%
%
%
%
\end{proof}


\begin{corollary}\label{corol_pmax}
  For every standard structure $M$, the following are equivalent
  \begin{itemize}
    \item[1.] $M$ is p-maximal;
    \item[2.] $M\models\{\varphi\}'\leftrightarrow\varphi$ for every $\varphi\in{\EuScript F}^{\rm p}(M)$ 
  \end{itemize}
\end{corollary}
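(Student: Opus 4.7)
The direction $\varphi \to \{\varphi\}'$ holds in every ${\EuScript L}$-structure by the very definition of weakening (each $\varphi'\in\{\varphi\}'$ satisfies $\varphi\to\varphi'$ globally). So in both implications the content lies in the converse direction $\{\varphi\}'\to\varphi$.

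For (2)$\Rightarrow$(1), I fix a positive sentence $\sigma\in{\EuScript F}^{\rm p}(M)$ that holds in some p-elementary extension $N\succeq^{\rm p} M$ and aim to show $M\models\sigma$. Using hypothesis~(2) applied to $\sigma$, it suffices to establish $M\models\sigma'$ for every weakening $\sigma'>\sigma$. Fixing such a $\sigma'$, I invoke Lemma~\ref{lem_interpolation}(1) to produce a strong negation $\tilde\sigma\perp\sigma$ with $\sigma\to\neg\tilde\sigma\to\sigma'$ in every ${\EuScript L}$-structure. Since $\tilde\sigma$ is again a positive sentence (strong negation dualizes connectives and replaces compacts by disjoint compacts, all within ${\EuScript F}^{\rm p}$) and $\tilde\sigma\to\neg\sigma$ holds globally, the assumption $M\models\tilde\sigma$ would force $N\models\tilde\sigma$ by p-elementarity, contradicting $N\models\sigma$. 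Hence $M\models\neg\tilde\sigma$, which in turn yields $M\models\sigma'$.

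For (1)$\Rightarrow$(2), I apply the previous theorem to extend $M$ p-elementarily to a p-saturated standard structure $N$; in particular $N$ is p-$\omega$-saturated. Given $\varphi(x)\in{\EuScript F}^{\rm p}$ and parameters $a\in M^{|x|}$ with $M\models\{\varphi(a)\}'$, each $\varphi'(a)\in\{\varphi(a)\}'$ is a positive sentence in ${\EuScript F}^{\rm p}(M)$, so by p-elementarity $N\models\{\varphi(a)\}'$. Proposition~\ref{prop_approx} then upgrades this to $N\models\varphi(a)$. As $\varphi(a)$ is a sentence in ${\EuScript F}^{\rm p}(M)$ that holds in the p-elementary extension $N$, p-maximality of $M$ delivers $M\models\varphi(a)$, as required.

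The main obstacle, insofar as there is one, is purely bookkeeping: checking that the strong negations and weakenings remain in ${\EuScript F}^{\rm p}(M)$ (with the same parameters from $M$) so that p-elementarity and p-maximality can be legitimately applied to them. All the genuine work is already packaged in the interpolation lemma, in Proposition~\ref{prop_approx}, and in the existence theorem for p-saturated p-elementary extensions.
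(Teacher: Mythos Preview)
Your proof is correct and follows essentially the same route as the paper: for (1)$\Rightarrow$(2) you pass to a p-$\omega$-saturated p-elementary extension and invoke Proposition~\ref{prop_approx}, then pull back via p-maximality; for (2)$\Rightarrow$(1) you use Lemma~\ref{lem_interpolation} to interpolate a strong negation and transfer its failure down from $N$ to $M$ by p-elementarity. The only cosmetic difference is that the paper inserts an intermediate $\varphi''$ with $\varphi'>\varphi''>\varphi$ before applying the interpolation lemma, which is not actually needed---your direct application of Lemma~\ref{lem_interpolation}(1) to the pair $\sigma'>\sigma$ works just as well.
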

\begin{proof}
(1$\Rightarrow$2) \ 
Assume $M\models\{\varphi\}'$ and let $N$ be a p-$\omega$-satutated p-elementary superstructure of $M$.
Then $N\models\varphi$ hence $M\models\varphi$ follows by p-maximality.

(2$\Rightarrow$1) \ 
For the converse, assume $N\models\varphi$ where $N$ is some p-elementary extension of $M$.
Let $\varphi'>\varphi''>\varphi$.
We prove $M\models\varphi'$, then $M\models\varphi$ follows from (2).
By Lemma~\ref{lem_interpolation} there is some $\tilde\varphi\perp\varphi''$ such that $\varphi''\rightarrow\neg\tilde\varphi\rightarrow\varphi'$.
Then $\neg\tilde\varphi$ and therefore  $M\models\neg\tilde\varphi$.
Then $M\models\varphi'$.
\end{proof}

\def\ceq#1#2#3{\parbox[t]{20ex}{$\displaystyle #1$}\parbox[t]{6ex}{\hfil $#2$}{$\displaystyle #3$}}

\begin{remark}\label{remk_approx_EH_free}
  By Corollary~\ref{corol_st}, when $\varphi(x\,;\eta)$ does not contain existential quantifiers of sort ${\sf H}$, Proposition~\ref{prop_approx} does not require any assumption of saturation.
  In general, some saturation is necessary: consider the model presented after Corollary~\ref{corol_st} and the formula $\exists x\,\big(x>0\ \wedge\ \iota(x)\in\{0\}\big)$.
  For a counterexample in ${\EuScript F}^{\rm c}$, see Section 5 in~\cite{HI}.
\end{remark}

  
\section{The monster model}\label{monster}

\def\ceq#1#2#3{\parbox[t]{18ex}{$\displaystyle #1$}\parbox{6ex}{\hfil $#2$}{$\displaystyle #3$}}

We denote by \emph{${\EuScript U}$\/} some large p-saturated standard structure which we call the positive monster model.
Truth is evaluated in ${\EuScript U}$ unless otherwise is specified.
We denote by $T$ the positive theory of ${\EuScript U}$.
The density of a set of formulas is understood modulo $T$.
Below we say \emph{p/c-model\/} for p/c-elementary substructure of ${\EuScript U}$.
We stress that the truth of some $\varphi\in{\EuScript F}^{\rm p/c}(M)$ in a p/c-model $M$ implies the truth of $\varphi$ (in ${\EuScript U}$) but not vice versa.
However, all p/c-models agree on the approximated truth.

\begin{fact}
  The following are equivalent 
  \begin{itemize}
    \item[1.] $M$ is a p/c-model;
    \item[2.] $\varphi\ \ \Leftrightarrow\ \ M\models\big\{\varphi\big\}'$\quad for every $\varphi\in{\EuScript F}^{\rm p/c}(M)$.
  \end{itemize}
\end{fact}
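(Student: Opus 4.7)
The plan is to use Lemma~\ref{lem_interpolation} and Proposition~\ref{prop_approx} to shuttle truth between $M$ and ${\EuScript U}$. I will prove (1 $\Rightarrow$ 2) by establishing each direction of the biconditional in (2) separately, and then observe that (2 $\Rightarrow$ 1) is nearly immediate since $\varphi$ always implies every weakening of itself.

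For (1 $\Rightarrow$ 2), fix $\varphi\in{\EuScript F}^{\rm p/c}(M)$. Suppose first that $\varphi$ holds (in ${\EuScript U}$), and fix an arbitrary weakening $\varphi'>\varphi$; the goal is $M\models\varphi'$. By Lemma~\ref{lem_interpolation}(1) there is $\tilde\varphi\perp\varphi$ with $\varphi\rightarrow\neg\tilde\varphi\rightarrow\varphi'$. The strong negation of a p/c-formula is again a p/c-formula (remarked just before Lemma~\ref{lem_interpolation}), and since ${\EuScript U}\models\neg\tilde\varphi$, contrapositive p/c-elementarity gives $M\not\models\tilde\varphi$, hence $M\models\varphi'$. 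Since $\varphi'$ was arbitrary, $M\models\{\varphi\}'$. Conversely, assume $M\models\{\varphi\}'$; because ${\EuScript U}$ is p-saturated (in particular p-$\omega$-saturated), Proposition~\ref{prop_approx} reduces the task to showing ${\EuScript U}\models\{\varphi\}'$. Given $\varphi'>\varphi$, use the density of $>$ to interpolate $\varphi'>\varphi''>\varphi$; then $M\models\varphi''$ by hypothesis, and p/c-elementarity lifts this to ${\EuScript U}\models\varphi''$, whence ${\EuScript U}\models\varphi'$.

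The direction (2 $\Rightarrow$ 1) is essentially a formality: given $\varphi\in{\EuScript F}^{\rm p/c}(M)$ with $M\models\varphi$, the relation $\varphi\rightarrow\varphi'$ (which holds in every ${\EuScript L}$-structure for every $\varphi'>\varphi$) yields $M\models\{\varphi\}'$, and then (2) gives ${\EuScript U}\models\varphi$, which is exactly p/c-elementarity.

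I do not foresee a substantial obstacle. The only step deserving attention is the invocation of strong negation in the forward direction: one must note that $\tilde\varphi$ remains in ${\EuScript F}^{\rm c}$ whenever $\varphi$ is continuous, so the argument goes through uniformly in the p and c cases without any separate treatment.
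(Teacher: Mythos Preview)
Your proof is correct and follows essentially the same route as the paper: the paper's proof of (1$\Rightarrow$2) simply cites the argument of Corollary~\ref{corol_pmax}, which is exactly your interpolation via $\tilde\varphi\perp\varphi$ from Lemma~\ref{lem_interpolation} together with Proposition~\ref{prop_approx}, and for (2$\Rightarrow$1) the paper just writes ``Clear'', matching your observation that $\varphi\rightarrow\varphi'$ makes it immediate. Your intermediate $\varphi''$ in the $\Leftarrow$ half of (1$\Rightarrow$2) is harmless but unnecessary---you can apply p/c-elementarity directly to $\varphi'$.
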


\begin{proof}
  (1$\Rightarrow$2) By the same argument as in the proof of Corollary~\ref{corol_pmax}. 
  
  (2$\Rightarrow$1) Clear.
\end{proof}

The following fact demonstrates how positive compactness applies.
There are some subtle differences from the classical setting.
Let $A\subseteq{\EuScript U}$ be a small set throughout this section.

\begin{fact}\label{fact_compactness_imp}
  Let $p(x)\subseteq{\EuScript F}^{\rm p}(A)$ be a type.
  Then for every  $\varphi(x)\in{\EuScript F}^{\rm p}({\EuScript U})$
  \begin{itemize}
    \item[i.] if $p(x)\rightarrow\neg\varphi(x)$ then for some conjunction $\psi(x)$ of formulas in $p(x)$
    
    \noindent\kern-\leftmargin
    \ceq{\hfill\psi(x)}{\rightarrow}{\neg\varphi(x);}

    \item[ii.] if $p(x)\rightarrow\varphi(x)$ and $\varphi'>\varphi$ then for some conjunction $\psi(x)$ of formulas in $p(x)$
    
    \noindent\kern-\leftmargin
    \ceq{\hfill\psi(x)}{\rightarrow}{\varphi'(x).}
  \end{itemize} 
\end{fact}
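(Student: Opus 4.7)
The plan is to handle (i) by a direct appeal to the p-saturation of ${\EuScript U}$, and then derive (ii) by reducing it to (i) via the interpolation lemma (Lemma~\ref{lem_interpolation}).

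For (i), the hypothesis $p(x)\to\neg\varphi(x)$ says precisely that the type $p(x)\cup\{\varphi(x)\}$ has no realization in ${\EuScript U}$. This is a positive type over the small parameter set $A\cup\mathrm{par}(\varphi)$, so by p-saturation of ${\EuScript U}$ it cannot be finitely consistent. Thus there are finitely many $\varphi_1,\dots,\varphi_n\in p(x)$ such that $\psi(x):=\varphi_1(x)\wedge\cdots\wedge\varphi_n(x)$ has no common realization with $\varphi(x)$, i.e.\@ $\psi(x)\to\neg\varphi(x)$ holds in ${\EuScript U}$.

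For (ii), fix $\varphi'>\varphi$. By Lemma~\ref{lem_interpolation} pick $\tilde\varphi\perp\varphi$ with $\varphi\to\neg\tilde\varphi\to\varphi'$. The strong negation $\tilde\varphi$ is again a positive formula: atomic formulas of the form $t\in C$ are replaced by $t\in \tilde C$ with $\tilde C$ compact, and atomic formulas from ${\EuScript L}_{\sf H}$ are replaced by their negation, which is itself atomic thanks to the Morleyzation convention in Definition~\ref{def_0}. From $p\to\varphi$ and $\varphi\to\neg\tilde\varphi$ we get $p(x)\to\neg\tilde\varphi(x)$, so part (i) applied to $\tilde\varphi$ produces a finite conjunction $\psi(x)$ of formulas in $p(x)$ with $\psi(x)\to\neg\tilde\varphi(x)$. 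Combined with $\neg\tilde\varphi\to\varphi'$, this yields $\psi(x)\to\varphi'(x)$, as required.

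The only point that requires a bit of care is the observation that $\tilde\varphi$ lies in ${\EuScript F}^{\rm p}$ — without the Morleyzation this would fail, since negations of home-sort atomic formulas are not a priori positive. Everything else is a routine application of saturation and the interpolation lemma.
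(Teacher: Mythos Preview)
Your argument is correct and matches the paper's own proof exactly: (i) is immediate from p-saturation of ${\EuScript U}$, and (ii) follows from (i) via Lemma~\ref{lem_interpolation}. Your explicit remark that $\tilde\varphi\in{\EuScript F}^{\rm p}$ thanks to Morleyzation is a worthwhile clarification that the paper leaves implicit.
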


\begin{proof}
  (i) is immediate by saturation; (ii) follows from (i) by Lemma~\ref{lem_interpolation}.
\end{proof}

\begin{fact}\label{fact_Fdense}
  Let ${\EuScript H}$ be a p/c-dense set of positive/continuous formulas.
  Then ${\EuScript H}'$ is p/c-dense.
\end{fact}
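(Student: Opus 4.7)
Plan. Given a positive (resp.\ continuous) formula $\varphi$ and a weakening $\varphi' > \varphi$, I need to produce some $\sigma \in {\EuScript H}'$ with $\varphi \to \sigma \to \varphi'$. The first move is to create some slack between $\varphi$ and $\varphi'$ by using the density of $>$: pick $\varphi_1$ with $\varphi < \varphi_1 < \varphi'$. Applying the p/c-density of ${\EuScript H}$ to the pair $\varphi < \varphi_1$, I extract $\psi \in {\EuScript H}$ with $\varphi \to \psi \to \varphi_1$. Any weakening $\sigma > \psi$ lies in ${\EuScript H}'$ by definition and automatically satisfies $\varphi \to \psi \to \sigma$, so the task reduces to finding such a $\sigma$ that still satisfies $\sigma \to \varphi'$.

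To produce it, I would work inside ${\EuScript U}$. Proposition~\ref{prop_approx} yields $\{\psi\}' \leftrightarrow \psi$ in ${\EuScript U}$, so from $\psi \to \varphi_1$ we obtain $\{\psi\}' \to \varphi_1$. Since $\varphi' > \varphi_1$, Fact~\ref{fact_compactness_imp}(ii) applied with $p = \{\psi\}'$ delivers a finite subfamily $\sigma_1,\dots,\sigma_n \in \{\psi\}'$ whose conjunction implies $\varphi'$. Each $\sigma_i$ is obtained from $\psi$ by replacing every atomic subformula $t_j \in C_j$ of $\psi$ by $t_j \in C_j^i$ for some $C_j^i > C_j$. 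I then take $\sigma$ to be the \emph{common refinement}: replace each $t_j \in C_j$ by $t_j \in \bigcap_{i=1}^{n} C_j^i$. A finite intersection of neighborhoods of $C_j$ is still a neighborhood of $C_j$, so $\sigma > \psi$ and hence $\sigma \in {\EuScript H}'$; and since $\bigcap_i C_j^i \subseteq C_j^i$ for each $i$, an induction on formula structure gives $\sigma \to \sigma_i$ for every $i$, whence $\sigma \to \bigwedge_i \sigma_i \to \varphi'$.

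The main obstacle is the passage from a finite family of independently chosen weakenings $\sigma_i > \psi$ to a single witness $\sigma > \psi$ that dominates them all. Without the common refinement trick we would at best obtain a finite conjunction of ${\EuScript H}'$-formulas implying $\varphi'$, which is not of the required form; it is precisely the fact that finite intersections of neighborhoods of a compact are neighborhoods that salvages the argument. The continuous case requires no additional work because weakenings of continuous formulas remain continuous. Finally, the positive implication $\varphi \to \sigma \to \varphi'$ established in ${\EuScript U}$ transfers to every standard $T$-model via p-elementary embedding into ${\EuScript U}$, yielding p/c-density of ${\EuScript H}'$ modulo $T$.
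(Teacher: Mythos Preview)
Your argument is correct and follows the same route as the paper: insert an intermediate $\varphi_1$ between $\varphi$ and $\varphi'$, use density of ${\EuScript H}$ to get $\psi$, then combine Proposition~\ref{prop_approx} with Fact~\ref{fact_compactness_imp}(ii) to extract a weakening $\sigma>\psi$ that implies $\varphi'$. The only difference is that where the paper simply asserts the existence of a single $\psi'>\psi$ with $\psi'\to\varphi'$ (tacitly using that $\{\psi\}'$ is directed, as noted in the proof of Proposition~\ref{prop_approx}), you spell out the common-refinement construction explicitly; this is a welcome elaboration rather than a different approach.
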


\begin{proof}
  Let $\varphi'>\varphi$.
  Pick $\varphi''$ such that $\varphi'>\varphi''>\varphi$.
  By density, $\varphi(x)\rightarrow\psi(x)\rightarrow\varphi''(x)$ for som $\psi(x)\in{\EuScript H}$.
  It suffices to prove that $\psi'(x)\rightarrow\varphi'(x)$ for some $\psi'>\psi$.
  By Proposition~\ref{prop_approx}, $\psi(x)\leftrightarrow\{\psi(x)\}'$.
  Therefore, $\psi'(x)\rightarrow\varphi'(x)$ follows from Fact~\ref{fact_compactness_imp}.
\end{proof}

We write \emph{$\mbox{p/c-tp}(a/A)$\/} for the positive/continuous type of $a$ over $A$, that is, the set of formulas $\big\{\varphi(x)\in{\EuScript F}^{\rm p/c}(A)\ :\ \varphi(a)\big\}$.
In general, if ${\EuScript H}$ is any set of formulas, we write \emph{${\EuScript H}\mbox{-tp}(a/A)$\/} for the type $\big\{\varphi(x)\in{\EuScript H}(A)\ :\ \varphi(a)\big\}$. 
The undecorated symbol \emph{$\mbox{tp}(a/A)$\/} denotes the ${\EuScript L}$-type.

\begin{fact}\label{fact_max_cons_L}
  Let $p(x)\subseteq{\EuScript F}^{\rm p/c}(A)$.
  The following are equivalent
  \begin{itemize}
    \item[1.] $p(x)$ is a subset of ${\EuScript F}^{\rm p/c}_x(A)$ that is maximally consistent in ${\EuScript U}^{|x|}$;
    \item[2.] $p(x)=\mbox{p/c-tp}(a/A)$ for some $a\in{\EuScript U}^{|x|}$.
  \end{itemize}
\end{fact}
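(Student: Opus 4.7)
The plan is to lean on the p-saturation of ${\EuScript U}$ together with Proposition~\ref{prop_approx}/Corollary~\ref{corol_pmax} and the interpolation of Lemma~\ref{lem_interpolation}. Direction (1$\Rightarrow$2) is essentially a one-line saturation argument, while direction (2$\Rightarrow$1) carries the content: in positive logic the assertion that $\mbox{p/c-tp}(a/A)$ is a \emph{maximally} consistent set of positive formulas is nontrivial, because we cannot simply appeal to the presence of classical negations inside ${\EuScript F}^{\rm p/c}$.

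For (1$\Rightarrow$2), since $A$ is small and ${\EuScript U}$ is p-saturated, any finitely consistent type in ${\EuScript F}^{\rm p/c}_x(A)$ is realized in ${\EuScript U}^{|x|}$. Pick $a\in{\EuScript U}^{|x|}$ realizing $p$. Then $p\subseteq\mbox{p/c-tp}(a/A)$, and the right-hand side is itself a subset of ${\EuScript F}^{\rm p/c}_x(A)$ realized by $a$, hence consistent; maximality of $p$ forces equality.

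For (2$\Rightarrow$1), suppose $p=\mbox{p/c-tp}(a/A)$. Consistency is immediate since $a$ realizes $p$. To establish maximality I would fix $\varphi(x)\in{\EuScript F}^{\rm p/c}_x(A)$ with $\varphi\notin p$, i.e.\ $\neg\varphi(a)$, and exhibit a single $\psi(x)\in p$ such that $\psi\wedge\varphi$ is unrealizable (so $p\cup\{\varphi\}$ is not even finitely consistent in ${\EuScript U}^{|x|}$). Since ${\EuScript U}$ is p-$\omega$-saturated, Proposition~\ref{prop_approx} gives $\{\varphi\}'\leftrightarrow\varphi$, so from $\neg\varphi(a)$ I extract a weakening $\varphi'>\varphi$ with $\neg\varphi'(a)$. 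By Lemma~\ref{lem_interpolation} I select $\tilde\varphi\perp\varphi$ with $\varphi\rightarrow\neg\tilde\varphi\rightarrow\varphi'$. Contrapositively, $\neg\varphi'(a)$ forces $\tilde\varphi(a)$, so $\tilde\varphi\in p$. Finally $\tilde\varphi\rightarrow\neg\varphi$ in every ${\EuScript L}$-structure, so $\psi:=\tilde\varphi$ does the job.

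The main point to keep track of is purely bookkeeping: weakenings only enlarge the compact sets appearing in type-(2) atoms, and strong negations dualise connectives, replace $C$ by a disjoint compact $\tilde C$, and negate ${\EuScript L}_{\sf H}$-atoms (absorbed by Morleyzation), so both operations preserve the parameter set $A$ and keep the formula inside ${\EuScript F}^{\rm p/c}$. Consequently the continuous case runs word for word with the positive case, since ${\EuScript F}^{\rm c}$ is also closed under $>$ and $\perp$. No step here is hard once one is willing to read Proposition~\ref{prop_approx} and Lemma~\ref{lem_interpolation} together as the positive-logic surrogate for the law of excluded middle.
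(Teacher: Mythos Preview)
Your proof is correct and follows the same approach as the paper. The only difference is cosmetic: where the paper writes tersely ``Hence $\tilde\varphi(a)$ holds for some $\tilde\varphi\perp\varphi$'', you spell out that this inference goes through Proposition~\ref{prop_approx} (to find $\varphi'>\varphi$ with $\neg\varphi'(a)$) and then Lemma~\ref{lem_interpolation} (to interpolate $\tilde\varphi$), which is exactly the justification the paper is leaving implicit.
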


\begin{proof}
  (1$\Rightarrow$2) \ 
  If $p(x)$ is consistent then $p(x)\subseteq\mbox{p/c-tp}(a/A)$  for some $a\in{\EuScript U}^{|x|}$.
  By maximality $p(x)=\mbox{p/c-tp}(a/A)$.

  (2$\Rightarrow$1) \ 
  Suppose $\varphi(x)\in{\EuScript F}^{\rm p/c}(A)\smallsetminus  p$.
  Then $\neg\varphi(a)$.
  Hence $\tilde\varphi(a)$ holds for some $\tilde\varphi\perp\varphi$.
  Therefore $p(x)\rightarrow\neg\varphi(x)$ follows.
\end{proof}

Fact~\ref{fact_max_cons_L} extends as follows.

\begin{fact}\label{fact_max_cons_F}
  Let $p(x)\subseteq{\EuScript H}(A)$, where ${\EuScript H}$ is a p/c-dense set of positive/continuous formulas.
  Then the following are equivalent 
  \begin{itemize}
    \item[1.] $p(x)$ is a maximally consistent subset of ${\EuScript H}_x(A)$;
    \item[2.] $p(x)={\EuScript H}\mbox{-tp}(a/A)$ for some $a\in{\EuScript U}^{|x|}$;
    \item[3.] $p(x)\leftrightarrow\mbox{p/c-tp}(a/A)$ for some $a\in{\EuScript U}^{|x|}$.
  \end{itemize}

\end{fact}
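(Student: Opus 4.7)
My plan is to prove the cycle $(1)\Rightarrow(2)\Rightarrow(3)\Rightarrow(1)$. The implication $(1)\Rightarrow(2)$ follows the same recipe as the corresponding direction of Fact~\ref{fact_max_cons_L}: by p-saturation of ${\EuScript U}$ the consistent set $p$ is realized by some $a\in{\EuScript U}^{|x|}$, whence $p\subseteq{\EuScript H}\mbox{-tp}(a/A)$, and maximality of $p$ in ${\EuScript H}_x(A)$ together with consistency of ${\EuScript H}\mbox{-tp}(a/A)$ forces equality.

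The heart of the argument is $(2)\Rightarrow(3)$, which is where the density of ${\EuScript H}$ actually does work. Assume $p={\EuScript H}\mbox{-tp}(a/A)$. The direction $\mbox{p/c-tp}(a/A)\to p$ is trivial since $p\subseteq\mbox{p/c-tp}(a/A)$. For the converse, let $b$ realize $p$ and fix $\psi\in\mbox{p/c-tp}(a/A)$; I want $\psi(b)$. By Proposition~\ref{prop_approx} (which applies in the p-saturated ${\EuScript U}$), it suffices to show $\psi'(b)$ for every $\psi'>\psi$. Given such a $\psi'$, density of ${\EuScript H}$ produces $\chi\in{\EuScript H}$ with $\psi\to\chi\to\psi'$; then $\chi(a)$ forces $\chi\in p$, whence $\chi(b)$ and finally $\psi'(b)$.

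For $(3)\Rightarrow(1)$: by (3), $a$ realizes $p$, so $p$ is consistent. For maximality, suppose toward contradiction that $\chi\in{\EuScript H}_x(A)\setminus p$ and $p\cup\{\chi\}$ is realized by some $b$. Then $b$ realizes $\mbox{p/c-tp}(a/A)$, and maximality of the latter in ${\EuScript F}^{\rm p/c}_x(A)$ (Fact~\ref{fact_max_cons_L}) yields $\mbox{p/c-tp}(b/A)=\mbox{p/c-tp}(a/A)$; in particular $\chi(a)$. Repeating this argument for an arbitrary realization of $p$ shows that $\chi$ is entailed by $p$, so $\chi\in p$ once $p$ is understood (as is customary for types) modulo consequence inside ${\EuScript H}_x(A)$ -- contradicting $\chi\notin p$.

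The only delicate step I expect is the $(2)\Rightarrow(3)$ squeeze: density of ${\EuScript H}$ lets me slot an ${\EuScript H}$-formula between $\psi$ and each weakening $\psi'$, while Proposition~\ref{prop_approx} (p-saturation) collapses $\{\psi\}'$ back to $\psi$. The remaining implications are essentially bookkeeping, modulo the convention of identifying types with their consequences.
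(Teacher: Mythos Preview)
Your argument is correct and matches the paper's: $(1{\Rightarrow}2)$ by saturation and maximality, $(2{\Rightarrow}3)$ by density-sandwiching plus Proposition~\ref{prop_approx} (the paper compresses this to the one-line remark that $q\leftrightarrow q'$, where $q=\mbox{p/c-tp}(a/A)$), and $(3{\Rightarrow}1)$, which the paper simply labels ``immediate''. Your caveat that $(3{\Rightarrow}1)$ only goes through once $p$ is read modulo consequence in ${\EuScript H}_x(A)$ is well taken --- without that convention a proper subset of ${\EuScript H}\mbox{-tp}(a/A)$ still equivalent to $\mbox{p/c-tp}(a/A)$ would satisfy (3) but not (1); the paper leaves this point implicit.
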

  
\begin{proof}
  (1$\Rightarrow$2) \ 
  As above.
  
  (2$\Rightarrow$3) \ 
  As $q(x)\leftrightarrow q'(x)$, then $q(x)\leftrightarrow p(x)={\EuScript H}\mbox{-tp}(a/A)$.

  (3$\Rightarrow$1) \ 
  Immediate.
\end{proof}

\begin{proposition}\label{prop_Fapprox}
  Let ${\EuScript H}$ be a p/c-dense set of positive/continuous formulas.
  Then for every formula $\varphi(x)\in{\EuScript F}^{\rm p/c}$
  
  \ceq{{\rm i.}\hfill\neg\varphi(x)}
  {\leftrightarrow}
  {\bigvee\big\{\psi(x)\in{\EuScript H}\ :\  \psi'(x)\rightarrow\neg\varphi(x)\textrm{ for some }\psi'>\psi\big\};}

  \ceq{{\rm ii.}\hfill\neg\varphi(x)}
  {\leftrightarrow}
  {\bigvee\big\{\neg\psi(x)\ :\ \neg\psi(x)\rightarrow\neg\varphi(x)\textrm{ and }\psi(x)\in{\EuScript H}\big\}.}

\end{proposition}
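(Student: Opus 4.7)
The plan is to prove the two equivalences separately; in each the $\leftarrow$ direction is immediate, since every disjunct semantically implies $\neg\varphi$ (either by the direct hypothesis in (ii) or via $\psi\to\psi'\to\neg\varphi$ in (i)). For (ii) the non-trivial direction is a one-line application of density: assuming $\neg\varphi(a)$, Proposition~\ref{prop_approx} gives $\varphi^+>\varphi$ with $\neg\varphi^+(a)$, and p/c-density applied to $\varphi$ and $\varphi^+$ yields $\psi\in{\EuScript H}$ with $\varphi\to\psi\to\varphi^+$. Then $\varphi\to\psi$ gives $\neg\psi\to\neg\varphi$, placing $\psi$ in the index set, while $\neg\varphi^+(a)$ together with $\psi\to\varphi^+$ gives $\neg\psi(a)$.

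For (i) I would first convert $\neg\varphi(a)$ into a positive assertion. Assume $\neg\varphi(a)$; Proposition~\ref{prop_approx} delivers $\varphi^+>\varphi$ with $\neg\varphi^+(a)$, and Lemma~\ref{lem_interpolation}(1) then produces $\tilde\varphi\perp\varphi$ with $\varphi\to\neg\tilde\varphi\to\varphi^+$, whence $\tilde\varphi(a)$. By normality of $S$, each compact $\tilde C$ in $\tilde\varphi$ can be enlarged to a compact neighborhood $\tilde C^+$ still disjoint from the corresponding compact of $\varphi$; this gives a syntactic weakening $\tilde\varphi^+>\tilde\varphi$ with $\tilde\varphi^+\perp\varphi$. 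Density applied to $\tilde\varphi$ and $\tilde\varphi^+$ produces $\psi\in{\EuScript H}$ with $\tilde\varphi\to\psi\to\tilde\varphi^+$, so $\psi(a)$ holds and $\psi\to\tilde\varphi^+\to\neg\varphi$.

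The remaining step, which I expect to be the main point of the proof, is to promote the semantic implication $\psi\to\neg\varphi$ to a \textit{syntactic} weakening $\psi'>\psi$ that still implies $\neg\varphi$, as required by the index set of (i). Here one applies Proposition~\ref{prop_approx} to $\psi$ itself: the positive type $p(x)=\{\psi^{(i)}(x):\psi^{(i)}>\psi\}$ is equivalent to $\psi$ in ${\EuScript U}$, so $p(x)\to\neg\varphi(x)$. Fact~\ref{fact_compactness_imp}(i) yields a finite conjunction $\psi^{(i_1)}\wedge\cdots\wedge\psi^{(i_n)}\to\neg\varphi$, and iterating the directedness observation from the proof of Proposition~\ref{prop_approx} (the conjunction of two weakenings of $\psi$ is implied by a single weakening of $\psi$) gives a single $\psi'>\psi$ with $\psi'\to\psi^{(i_1)}\wedge\cdots\wedge\psi^{(i_n)}\to\neg\varphi$, closing the argument.
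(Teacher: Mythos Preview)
Your proof is correct. For (ii) it coincides with the paper's argument: the paper writes $\varphi\leftrightarrow\bigwedge\{\psi\in{\EuScript H}:\varphi\to\psi\}$ and negates, which unwound is exactly your use of Proposition~\ref{prop_approx} followed by density.

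For (i) the two arguments diverge. The paper works through types: it takes $p(x)=\mbox{p/c-tp}(a)$, invokes Fact~\ref{fact_max_cons_L} to get $p(x)\to\neg\varphi(x)$, passes to the ${\EuScript H}$-type $q(x)$ by density, and then applies compactness to $q'(x)\to\neg\varphi(x)$. You instead work pointwise with a concrete strong negation: from $\neg\varphi^+(a)$ you extract $\tilde\varphi\perp\varphi$ with $\tilde\varphi(a)$, thicken it via normality to $\tilde\varphi^+\perp\varphi$, and interpolate $\psi\in{\EuScript H}$ between $\tilde\varphi$ and $\tilde\varphi^+$. Your route is more elementary in that it avoids the maximal-type machinery (the content of Fact~\ref{fact_max_cons_L} is really the same strong-negation trick, just packaged abstractly); the paper's route is terser once that machinery is in place. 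Both arguments land on $\psi\in{\EuScript H}$ with $\psi(a)$ and $\psi\to\neg\varphi$, and both then need exactly the promotion step you spell out---Proposition~\ref{prop_approx} plus compactness plus directedness of weakenings---to upgrade this to $\psi'>\psi$ with $\psi'\to\neg\varphi$. The paper leaves that last step implicit; you make it explicit, which is a virtue.
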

 
\begin{proof} 
  (i) \ Only $\rightarrow$ requires a proof.
  Let $a\in {\EuScript U}^{|x|}$ be such that $\neg\varphi(a)$.
  Let $p(x)=\mbox{p/c-tp}(a)$.
  By Fact~\ref{fact_max_cons_L}, $p(x)\rightarrow\neg\varphi(x)$.
  As ${\EuScript H}$ is p/c-dense, $p(x)\leftrightarrow q(x)={\EuScript H}\mbox{-tp}(a)$.
  Then $q'(x)\rightarrow\neg\varphi(x)$ hence, by compactness, $\psi'(x)\rightarrow\neg\varphi(x)$ for some $\psi(x)\in{\EuScript H}$.

  (ii) \ By density

  \ceq{\hfill\varphi(x)}
  {\leftrightarrow}
  {\bigwedge\big\{\psi(x)\in{\EuScript H}\ :\ \varphi(x)\rightarrow\psi(x)\big\}.}
  
  Negating both sides of the equivalence we obtain (ii).
\end{proof}

\section{Cauchy's completions}\label{Cauchy}

We recall a few definitions from~\cite{clcl}. For $\tau(x,z)=\tau_1(x,z),\dots,\tau_n(x,z)$ a tuple of terms of sort ${\sf H}^{|x|+|z|}\to {\sf S}$ we define the formula

\ceq{\hfill\emph{$x\sim_\tau y$}}{=}{\bigwedge_{i=1}^n\forall z\ \ \tau_i(x,z)=\tau_i(y,z),}

where the expression $\alpha=\beta$ is shorthand for $\langle\alpha,\beta\rangle\in\Delta$, where $\Delta$ is the diagonal of $S^2$.
We also define the type 

\ceq{\hfill\emph{$x\sim y$}}{=}{\Big\{x\sim_\tau y\ :\ \tau(x,z)\textrm{ as above}\Big\}.}

The following fact will be used below without mention.
It is proved by induction on $\lambda$.



\begin{fact}\label{fact_productUniformity}
  For any $a=\langle a_i:i<\lambda\rangle$ and $b=\langle b_i:i<\lambda\rangle$ 
  
  \ceq{\hfill a\sim b}{\Leftrightarrow}{a_i\sim b_i}\quad for every $i<\lambda$.
\end{fact}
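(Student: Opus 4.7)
The plan is to prove each direction separately, with the backward direction being the substantive one.

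For the forward direction ($\Rightarrow$), I would fix $j<\lambda$ and show $a_j\sim b_j$. Given any tuple $\sigma(u,z)$ of terms of sort ${\sf H}^{1+|z|}\to{\sf S}$, I view $\sigma$ as a tuple of terms $\tau(x,z)$ in variables $x=\langle x_i:i<\lambda\rangle$ and $z$ where only the single variable $x_j$ actually occurs. Then $a\sim_\tau b$ says exactly that $\sigma(a_j,z)=\sigma(b_j,z)$ for every $z$. Since $\sigma$ was arbitrary, $a_j\sim b_j$.

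For the backward direction ($\Leftarrow$), the key observation is that any single term $\tau(x,z)$ of sort ${\sf H}^{|x|+|z|}\to{\sf S}$ mentions only finitely many of the variables $x_i$, say $x_{i_1},\dots,x_{i_n}$. I would proceed by a one-swap-at-a-time argument, which is essentially induction on $n$. At step $k$, I want

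\ceq{\hfill\tau(a_{i_1},\dots,a_{i_{k-1}},a_{i_k},\dots,a_{i_n},z)}{=}{\tau(a_{i_1},\dots,a_{i_{k-1}},b_{i_k},a_{i_{k+1}},\dots,a_{i_n},z).}

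To get this from the hypothesis $a_{i_k}\sim b_{i_k}$, I absorb the other coordinates into the parameter tuple: define the one-variable term $\sigma(u,z'):=\tau(a_{i_1},\dots,a_{i_{k-1}},u,a_{i_{k+1}},\dots,a_{i_n},z)$ where $z'$ collects the extra parameters together with $z$. Then $a_{i_k}\sim_\sigma b_{i_k}$ gives precisely the desired equation (with the parameters absorbed back into the right slots). Iterating all $n$ swaps yields $\tau(a,z)=\tau(b,z)$ for every $z$, hence $a\sim_\tau b$; as $\tau$ was arbitrary, $a\sim b$.

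The only real care needed is the bookkeeping in passing from a term in many variables to a one-variable term with extra parameters, which works because the definition of $\sim$ already quantifies universally over a parameter tuple $z$ of arbitrary length. The induction on $\lambda$ mentioned in the statement can be read off from this argument: the successor step is exactly one swap, and limit stages are handled automatically since each term involves only finitely many variables, so $a\sim_\tau b$ at level $\lambda$ reduces to the analogous statement at some $\mu<\lambda$ large enough to contain the finite index set of $\tau$.
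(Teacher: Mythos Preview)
Your argument is correct and matches the paper's approach: the paper gives no details beyond the hint ``proved by induction on $\lambda$'', and your swap-one-coordinate-at-a-time argument is exactly how that induction unfolds, with the key point being that the universal quantifier over $z$ in the definition of $\sim$ lets you absorb the remaining coordinates as parameters.

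One small presentational slip: the displayed equation you label ``step $k$'' has $a_{i_1},\dots,a_{i_{k-1}}$ in the first $k-1$ slots on both sides, but for the chain to reach from $\tau(a,z)$ to $\tau(b,z)$ the $k$-th step should have $b_{i_1},\dots,b_{i_{k-1}}$ there (the earlier swaps having already been performed). Your prose and your definition of $\sigma(u,z')$ make clear you understand this---the absorbed coordinates can be anything, so in particular the mixed tuple---but the equation as written is a single swap from the all-$a$ tuple rather than the $k$-th link of the chain.
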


  









Note that the approximations of the formula $x\sim_\tau\!y$ have the form

\ceq{\hfill\emph{$x\sim_{\tau,D} y$}}{=}{\bigwedge_{i=1}^n\forall z\ \ \langle \tau_i(x,z),\tau_i(y,z)\rangle\in D}

for some compact neighborhood $D$ of $\Delta$.
We write \emph{${\EuScript E}$} for the set containing the pairs $\tau,D$ as above.
The formulas $x\sim_\varepsilon y$, as $\varepsilon$ ranges over ${\EuScript E}$, form a base for a system of entougages on ${\EuScript U}^{|x|}$.
We refer to this uniformity and the topology associated to it as the uniformity, respectively topology, \emph{induced by $S$.}
Though not needed in the sequel, it is worth mentioning that the topology induced by $S$ on ${\EuScript U}^{|x|}$ coincides with the product of the topology induced by $S$ on ${\EuScript U}$.
See~\cite{clcl} for details.

The following is easily proved by induction on the syntax.

\begin{fact}\label{fact_continuous}
  For every $\varphi(x)\in{\EuScript F}^{\rm c}$ 

  \ceq{\hfill x\sim y}{\rightarrow}{\varphi(x)\leftrightarrow\varphi(y).}
\end{fact}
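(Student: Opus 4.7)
The plan is a structural induction on $\varphi \in {\EuScript F}^{\rm c}$, with the atomic case as the only substantive step.

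For the base case, a continuous atomic formula has the form $\tau(x;\eta) \in C$ with $\tau$ a tuple of terms of sort ${\sf H}^{|x|} \times {\sf S}^{|\eta|} \to {\sf S}$ and $C \subseteq S^{|\tau|}$ compact. I will exploit the fact that in every standard structure the ${\sf S}$-sort is literally $S$ and each element of $S$ is named by a constant symbol of ${\EuScript L}_{\sf S}$. Hence for any parameter $\alpha \in S^{|\eta|}$, the expression $\tau(x;\alpha)$, obtained by substituting the constants for the components of $\alpha$, is a bona fide tuple of terms of sort ${\sf H}^{|x|} \to {\sf S}$. Specialising the definition of $\sim$ to this tuple (with empty auxiliary $z$) and invoking $a \sim b$ yields $\tau(a;\alpha) = \tau(b;\alpha)$, whence $\tau(a;\alpha) \in C \Leftrightarrow \tau(b;\alpha) \in C$.

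The inductive step is routine. Conjunction and disjunction are immediate. For $\exists \eta' \, \psi(x;\eta,\eta')$ any witness $\beta \in S^{|\eta'|}$ for one side serves for the other by the inductive hypothesis applied with the enlarged ${\sf S}$-parameter tuple $(\alpha,\beta)$; the universal case is dual. For $\exists z \, \psi(x,z;\eta)$ with $z$ of sort ${\sf H}$, given a witness $c$ I first verify $(a,c) \sim (b,c)$: for every tuple of terms $\sigma(x,z,w)$ of sort ${\sf H}^{|x|+1+|w|} \to {\sf S}$, applying $a \sim b$ to $\sigma$ with auxiliary $(z,w)$ gives $\sigma(a,z,w) = \sigma(b,z,w)$ for all $z,w$, and specialising $z = c$ furnishes what is needed. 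The inductive hypothesis then gives $\psi(b,c;\eta)$; the universal case is dual.

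The only point requiring any real thought is the atomic case. The key observation is that although $\sim$ is defined via terms involving only ${\sf H}$-sort variables, this is nonetheless enough to handle atomic continuous formulas whose terms also involve ${\sf S}$-sort free variables, because such variables range over $S$ and every element of $S$ is already a constant of ${\EuScript L}_{\sf S}$. In particular no appeal to equicontinuity, saturation, or any nontrivial property of ${\EuScript U}$ is needed.
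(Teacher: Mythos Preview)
Your proposal is correct and follows the same approach the paper indicates, namely structural induction on the syntax of $\varphi$. The paper's proof body is in fact empty apart from the remark ``easily proved by induction on the syntax,'' so your version supplies the details the paper omits; in particular, your handling of the atomic case via substitution of ${\EuScript L}_{\sf S}$-constants for the ${\sf S}$-sort free variables, and your verification that $(a,c)\sim(b,c)$ for the ${\sf H}$-quantifier step (essentially Fact~\ref{fact_productUniformity}), are exactly the points one would need to check.
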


\begin{proof}
  %
  %
  %
  %
  %
  %
  %
\end{proof}

We define

\ceq{\hfill\emph{$x\sim'y$}}{=}{\Big\{x\sim_\varepsilon y\ :\ \varepsilon\in{\EuScript E}\Big\}.}

The following corollary corresponds to the Perturbation Lemma~\cite{HI}*{Proposition~5.15}.

\begin{corollary}\label{corol_pertubation}
  For every $\varphi(x)\in{\EuScript F}^{\rm c}$ and every $\varphi'>\varphi$ there an $\varepsilon\in{\EuScript E}$ such that

  \ceq{\hfill x\sim_\varepsilon \!y\ \wedge\ \varphi(y)}{\rightarrow}{\varphi'(x)}
\end{corollary}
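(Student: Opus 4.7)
The plan is to reduce the corollary to positive compactness via Fact~\ref{fact_continuous}. Since $\varphi'>\varphi$ implies $\varphi(x)\to\varphi'(x)$ in every ${\EuScript L}$-structure, Fact~\ref{fact_continuous} gives, for $\varphi\in{\EuScript F}^{\rm c}$, the implication
$$x\sim y\ \wedge\ \varphi(y)\ \rightarrow\ \varphi(x)$$
in ${\EuScript U}$. In other words, the partial type $p(x,y)=\{x\sim_\varepsilon y:\varepsilon\in{\EuScript E}\}\cup\{\varphi(y)\}$ entails $\varphi(x)$.

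Next, I invoke Fact~\ref{fact_compactness_imp}(ii) with the given weakening $\varphi'>\varphi$: some finite conjunction of formulas in $p(x,y)$ already implies $\varphi'(x)$. Without loss of generality this conjunction is $\varphi(y)\wedge\bigwedge_{j=1}^k x\sim_{\varepsilon_j}y$ for finitely many $\varepsilon_1,\ldots,\varepsilon_k\in{\EuScript E}$ (the conjunct $\varphi(y)$ can always be included).

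The final step is to merge the $\varepsilon_j$'s into a single entourage. Writing $\varepsilon_j=(\tau^{(j)},D^{(j)})$, I set $\tau$ to be the concatenation of the tuples $\tau^{(j)}$ and $D=\bigcap_{j=1}^k D^{(j)}$. Then $D$ is a compact neighborhood of the diagonal $\Delta\subseteq S^2$, since a finite intersection of compact neighborhoods of $\Delta$ in the Hausdorff space $S^2$ is itself a compact neighborhood of $\Delta$. Hence $\varepsilon=(\tau,D)\in{\EuScript E}$ and $x\sim_\varepsilon y$ implies each $x\sim_{\varepsilon_j}y$, which yields the implication claimed by the corollary.

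The only mildly nontrivial point is the directedness of ${\EuScript E}$ under reverse implication used in the last step, but this is essentially the statement from the paragraph preceding the corollary that the formulas $x\sim_\varepsilon y$ form a base for a uniformity on ${\EuScript U}^{|x|}$. The use of Fact~\ref{fact_compactness_imp}(ii) is where the weakening $\varphi'>\varphi$ is actually needed: without passing from $\varphi$ to $\varphi'$, the type $p(x,y)$ would generally fail to be finitely generated, since the full $x\sim y$ cannot be collapsed to a single formula in ${\EuScript E}$.
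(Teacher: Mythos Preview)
Your proof is correct and follows the same route as the paper: use Fact~\ref{fact_continuous} to get $x\sim' y\cup\{\varphi(y)\}\rightarrow\varphi(x)$, then apply Fact~\ref{fact_compactness_imp}(ii) to extract a finite conjunction implying $\varphi'(x)$. The paper's proof is a one-line reference to these two facts; you have simply spelled out the compactness step and the directedness of ${\EuScript E}$ that the paper leaves implicit.
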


\begin{proof}
  As $x\sim' y\ \cup\ \big\{\varphi(x)\big\}\ \rightarrow\ \varphi(y)$ by Fact~\ref{fact_continuous}, the corollary follows from Fact~\ref{fact_compactness_imp}.
\end{proof}

We say what a type $q(x)$ is \emph{finitely satisfiable\/} in $A$ if every conjunction of formulas in $q(x)$ has a solution in $A^{|x|}$.
This definition coincides with the classical one, but in our context, the notion is less robust.
For instance, if $M$ is a p-model and $q(x)=\mbox{p-tp}(a/M)$ then $q'(x)$ is always finitely satisfiable while $q(x)$ need not.

A set $A\subseteq{\EuScript U}$ is \emph{Cauchy complete\/} if it contains all those $a\in{\EuScript U}$ such that $a\sim'x$ is finitely satisfied in $A$.
Note that Cauchy complete sets are in particular closed under the equivalence $(\sim)$.
The \emph{Cauchy completion\/} of $A$ is the set 

\ceq{\hfill\emph{${\rm Ccl}(A)$}}{=}{\big\{a:a\sim'x\textrm{ is finitely satisfied in }A\big\}}.

\begin{fact}\label{fact_finsat}
  ${\rm Ccl}(A)$ is Cauchy complete.
\end{fact}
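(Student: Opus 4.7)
The plan is to unwind the definitions and rely on the fact that the entourages $\sim_\varepsilon$ for $\varepsilon\in{\EuScript E}$ form a base for a uniformity on ${\EuScript U}^{|x|}$, so in particular they enjoy a ``triangle-like'' composition property: for every $\varepsilon\in{\EuScript E}$ there is some $\delta\in{\EuScript E}$ such that $x\sim_\delta y\ \wedge\ y\sim_\delta z\ \rightarrow\ x\sim_\varepsilon z$.

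First I would verify (or recall from the previous discussion referring to~\cite{clcl}) this composition property. Given $\varepsilon=\langle\tau,D\rangle$, since $D$ is a compact neighborhood of the diagonal $\Delta$ of $S^2$, normality of $S$ together with the continuity of the evaluation of terms provides a compact neighborhood $D'$ of $\Delta$ such that $D'\circ D'\subseteq D$ in the usual sense of relation composition; then $\delta=\langle\tau,D'\rangle$ witnesses the claim. I would also note that ${\EuScript E}$ is downward directed: the atomic approximations involved in finitely many pairs $\langle\tau_i,D_i\rangle$ can be packaged into a single pair by concatenating the term tuples and intersecting the corresponding neighborhoods.

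Next, suppose $a\in{\EuScript U}$ is such that $a\sim' x$ is finitely satisfied in ${\rm Ccl}(A)$. I must show $a\in{\rm Ccl}(A)$, i.e.\ that $a\sim' x$ is finitely satisfied in $A$. By directedness of ${\EuScript E}$ it suffices to show that for every $\varepsilon\in{\EuScript E}$ there is some $c\in A$ with $a\sim_\varepsilon c$. Fix $\varepsilon$ and pick $\delta\in{\EuScript E}$ with $\sim_\delta\circ\sim_\delta\ \subseteq\ \sim_\varepsilon$. By hypothesis there is $b\in{\rm Ccl}(A)$ with $a\sim_\delta b$. Since $b\in{\rm Ccl}(A)$, the type $b\sim' x$ is finitely satisfied in $A$, so in particular there is $c\in A$ with $b\sim_\delta c$. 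Composing, $a\sim_\varepsilon c$, as required.

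The main obstacle is really the composition property of the entourages, which is not stated explicitly before the fact but is an unavoidable piece of uniform-space machinery; once that is in hand the argument is a two-step triangle-inequality chase. Everything else (directedness of ${\EuScript E}$, passing from a finite conjunction in $a\sim' x$ to a single $\sim_\varepsilon$) is routine, and the proof never uses anything specific about $a$ beyond its being an element of the monster model.
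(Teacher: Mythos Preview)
Your argument is correct and is essentially identical to the paper's own proof: both pick an entourage $\delta$ (the paper calls it $\eta$) with $\sim_\delta\circ\sim_\delta\subseteq\,\sim_\varepsilon$, find $b\in{\rm Ccl}(A)$ with $a\sim_\delta b$, then $c\in A$ with $b\sim_\delta c$, and compose. The extra remarks you make about directedness and the composition property are implicit in the paper's appeal to the uniformity induced by $S$.
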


\begin{proof}
  Suppose that $x\sim'a$ is finitely satisfied in ${\rm Ccl}(A)$.
  Let $\varepsilon\in{\EuScript E}$ be given.
  We prove that  $a\sim_\varepsilon x$ is satisfied in $A$.
  Let $\eta\in{\EuScript E}$ be such that $x\sim_\eta y\sim_\eta z\ \rightarrow\ x\sim_\varepsilon z$.
  There is some $b\in{\rm Ccl}(A)$ such that $b\sim_\eta a$.
  There is some $c\in A$ such that $c\sim_\eta b$.
  Then $c$ satisfies $x\sim_\varepsilon a$, as required.
\end{proof}

We say that $p(x)\subseteq{\EuScript F}^{\rm c}({\EuScript U})$ is a \emph{Cauchy type\/} if it is consistent and $p(x)\wedge p(y)\rightarrow x\sim y$.

\begin{fact}
  Let $M$ be a p/c-model and let $p(x)\subseteq{\EuScript F}^{\rm p/c}(M)$ be a Cauchy type.
  Then all realizations of $p(x)$ belong to ${\rm Ccl}(M)$.
\end{fact}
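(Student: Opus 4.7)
The plan is to show that for every $\varepsilon \in {\EuScript E}$ and every realization $a \models p$, there exists $b \in M$ with $b \sim_\varepsilon a$; since the formulas $x \sim_\varepsilon a$ form a base of entourages, this yields finite satisfiability of $a \sim' x$ in $M$, i.e.\ $a \in {\rm Ccl}(M)$. Fix $\varepsilon = (\tau, D) \in {\EuScript E}$. Because $p$ is Cauchy, $p(x) \wedge p(y) \rightarrow x \sim_\tau y$, hence $p(x) \wedge p(y) \rightarrow x \sim_\eta y$ for every $\eta = (\tau, D')$ with $\Delta \subseteq D' \subsetneq D$ a compact neighborhood. Choose such $\eta$ with $\varepsilon > \eta$ a strict weakening (possible since $D$ is a compact neighborhood of $\Delta$). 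By Fact~\ref{fact_compactness_imp}(ii) applied to the Cauchy implication with the weakened right-hand side $x \sim_\eta y$, some single $\psi \in p$ (a finite conjunction of $p$-formulas) satisfies $\psi(x) \wedge \psi(y) \rightarrow x \sim_\eta y$ in ${\EuScript U}$. Substituting $x = a$ and using $\psi(a)$ gives the key implication $\psi(y) \rightarrow a \sim_\eta y$ in ${\EuScript U}$.

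The next step bridges the gap between $\psi$ and its approximations. Since ${\EuScript U}$ is p-saturated, Proposition~\ref{prop_approx} gives $\{\psi(y)\}' \leftrightarrow \psi(y)$, so the type $\{\psi(y)\}'$ already implies $a \sim_\eta y$. Apply Fact~\ref{fact_compactness_imp}(ii) a second time, now with antecedent $\{\psi(y)\}'$ and weakening $a \sim_\varepsilon y > a \sim_\eta y$; combined with the observation from the proof of Proposition~\ref{prop_approx} that the set of weakenings of $\psi$ is directed under conjunction, this produces a single weakening $\psi^* > \psi$ such that $\psi^*(y) \rightarrow a \sim_\varepsilon y$ holds in ${\EuScript U}$.

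To locate the witness in $M$: since $a$ witnesses ${\EuScript U} \models \exists y\, \psi(y)$ and $M$ is a p/c-model, $M \models \{\exists y\, \psi(y)\}'$; because $\exists y\, \psi^*(y)$ is itself a weakening of $\exists y\, \psi(y)$, this gives $M \models \exists y\, \psi^*(y)$. Pick $b \in M$ with $M \models \psi^*(b)$; p/c-elementarity transfers this to ${\EuScript U}$, and the implication from the previous step then yields $a \sim_\varepsilon b$. The main obstacle I anticipate is precisely the asymmetry that $M$ only approximately realizes $\psi$, while the Cauchy implication is formulated for genuine realizers; the double compactness argument above is tailored to trade a slightly coarser conclusion $\sim_\varepsilon$ (instead of the finer $\sim_\eta$) for a weakened antecedent $\psi^*$ that $M$ actually meets.
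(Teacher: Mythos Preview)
Your argument is correct and follows essentially the same route as the paper's proof, only spelled out in full: the paper writes tersely that $p(x)\rightarrow a\sim x$, that $p'(x)$ is finitely satisfied in $M$ because $M$ is a p/c-model, and hence that $a\sim' x$ is finitely satisfied in $M$; your intermediate $\eta$ and the two applications of Fact~\ref{fact_compactness_imp}(ii) are exactly what is needed to make the paper's last implication rigorous.
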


\begin{proof}
  If $p(x)$ is a Cauchy type, then $p(x)\rightarrow a\sim x$ for some $a\models p(x)$.
  As $M$ is a c-model, $p'(x)$ is finitely satisfied in $M$. 
  Then also $a\sim' x$ is finitely satisfied. 
  Hence $a\in{\rm Ccl}(M)$.
\end{proof}


\section{Morphims}\label{morphisms}

The notion of p-elementarity is discussed in detail in~\cite{clcl}.
In this section we are mostly concerned with its continuous analogue.
This is delicate due to the lack of equality of sort ${\sf H}$.
A possible approach is to work in the quotient ${\EuScript U}/{\sim}$.
A second possibility is to replace functions with relations.
The two options differs only in the notation.
Here we go for the second one that, arguably, simplifies the bookkeeping.

For simplicity, we only discuss morphisms between saturated structures.
We fix, besides ${\EuScript U}$, a second saturated structure ${\EuScript V}$.

Let $R\mathrel{\subseteq}{\EuScript U}\times{\EuScript V}$ be a binary relation.
If $a=\langle a_i:i<\lambda\rangle$ and $b=\langle b_i:i<\lambda\rangle$ we write $\langle a,b\rangle\in R$ to abbreviate: $\langle a_i,b_i\rangle\in R$ for all $i<\lambda$.
Let ${\EuScript H}$ be a set of formulas.
We say that $R$ preserves the truth of ${\EuScript H}$-formulas if 

\ceq{\#\hfill{\EuScript U}\models\varphi(a)}{\Rightarrow}{{\EuScript V}\models\varphi(b)}\hfill for every $\varphi(x)\in{\EuScript H}$ and every $\langle a,b\rangle\in R$.

We will consider two extreme cases for ${\EuScript H}$: the set of all positive/continuous formulas and the set of atomic positive/continuous formulas.
We denote the latter by \emph{${\EuScript F}^{\rm p/c}_{\rm at}$.}
Relations that preserve the truth of ${\EuScript F}^{\rm p/c}$-formulas are called \emph{p/c-elementary.}
Note that p-elementary relations are (graphs of) injective functions and most facts below become trivial.
Therefore we only discuss the continuous case.


By the following fact, if $R$ preserves the truth of ${\EuScript H}$-formulas so does $R^{-1}$.

\begin{fact}\label{fact_Rinverse}
  Let ${\EuScript H}$ be either ${\EuScript F}^{\rm c}$ or ${\EuScript F}^{\rm c}_{\rm at}$.
  Let $R$ be a relation that preserves the truth of ${\EuScript H}$-formulas.
  Then the converse implication in (\#) holds.
\end{fact}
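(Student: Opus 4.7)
The plan is to prove the converse direction in (\#) separately for the two admissible choices of ${\EuScript H}$, each drawing on a different piece of the machinery already in place.

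For ${\EuScript H}={\EuScript F}^{\rm c}_{\rm at}$, a direct normality argument suffices. Every atomic continuous formula has the form $\tau(x\,;\eta)\in C$ with $C\subseteq S^n$ compact. Given $\langle a,b\rangle\in R$, I would suppose ${\EuScript V}\models\tau(b)\in C$ while ${\EuScript U}\not\models\tau(a)\in C$. Since $S$ is compact Hausdorff, hence normal, there is a compact neighborhood $\tilde C$ of the value of $\tau$ at $a$ in ${\EuScript U}$ that is disjoint from $C$. Then ${\EuScript U}\models\tau(a)\in\tilde C$, so forward preservation gives ${\EuScript V}\models\tau(b)\in\tilde C$, contradicting ${\EuScript V}\models\tau(b)\in C$. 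This is essentially the atomic base case of Lemma~\ref{lem_interpolation}.

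For ${\EuScript H}={\EuScript F}^{\rm c}$, I would combine Proposition~\ref{prop_approx} with Lemma~\ref{lem_interpolation}(1). Suppose $\langle a,b\rangle\in R$, ${\EuScript V}\models\varphi(b)$, but ${\EuScript U}\not\models\varphi(a)$. Since ${\EuScript U}$ is p-saturated, Proposition~\ref{prop_approx} gives a weakening $\varphi'>\varphi$ with ${\EuScript U}\not\models\varphi'(a)$. Lemma~\ref{lem_interpolation}(1) then supplies $\tilde\varphi\perp\varphi$ with $\varphi\to\neg\tilde\varphi\to\varphi'$; the second implication, contraposed, yields ${\EuScript U}\models\tilde\varphi(a)$. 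The strong negation of a continuous formula is again continuous, so $\tilde\varphi\in{\EuScript F}^{\rm c}$ and forward preservation gives ${\EuScript V}\models\tilde\varphi(b)$. Since $\tilde\varphi\to\neg\varphi$ holds in every ${\EuScript L}$-structure, this contradicts ${\EuScript V}\models\varphi(b)$.

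I do not anticipate any serious obstacle. The main thing to keep track of is that every implication invoked ($\varphi\to\varphi'$ and $\tilde\varphi\to\neg\varphi$) is universally valid, so no hypothesis on the positive theory of ${\EuScript V}$ beyond its saturation is needed; and that $\tilde\varphi$ stays inside the continuous fragment, as was noted just after the definition of strong negation.
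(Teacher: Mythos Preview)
Your proof is correct and follows essentially the same approach as the paper. The only difference is packaging: for ${\EuScript H}={\EuScript F}^{\rm c}$ the paper cites Proposition~\ref{prop_Fapprox} directly to obtain a continuous $\psi$ with $\psi(a)$ and $\psi\to\neg\varphi$, whereas you unpack the same idea via Proposition~\ref{prop_approx} plus Lemma~\ref{lem_interpolation}(1) to produce a strong negation $\tilde\varphi$; your version has the minor advantage of making explicit that $\tilde\varphi\to\neg\varphi$ holds in every ${\EuScript L}$-structure, so the transfer to ${\EuScript V}$ needs nothing about its theory.
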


\begin{proof}
  ${\EuScript H}={\EuScript F}^{\rm c}$. \ 
  Assume ${\EuScript U}\models\neg\varphi(a)$ and apply Proposition~\ref{prop_Fapprox} to infer that ${\EuScript U}\models\psi(a)$ for some $\psi(x)\in{\EuScript F}^{\rm c}$ that implies $\neg\varphi(x)$.
  Then ${\EuScript V}\models\neg\varphi(b)$ follows.

  ${\EuScript H}={\EuScript F}^{\rm c}_{\rm at}$. \ 
  Assume ${\EuScript U}\models\tau(a)\notin C$ then ${\EuScript U}\models\tau(a)\in\tilde{C}$ for some compact $\tilde{C}$ disjoint of $C$.
  Then ${\EuScript V}\models\tau(b)\notin C$ follows.
\end{proof}

%
%
%

We denote by $(\sim)$ the equivalence relation defined as in Section~\ref{Cauchy}; we use the same symbol for the relation in ${\EuScript U}$ and in ${\EuScript V}$.
The context will disambiguate.
We say that the relation $R$ is \emph{reduced\/} if there is no $R'\ \subset\ R$ such that $R\ \subseteq\ \mathrel{(\sim)}\circ \mathrel{R'}\circ\mathrel{(\sim)}$.

\begin{fact}
  For every $R$ there is a reduced relation $R'\subseteq R$ such that 
  
  \ceq{\hfill\mathrel{(\sim)}\circ \mathrel{R}\circ\mathrel{(\sim)}}{=}{\mathrel{(\sim)}\circ \mathrel{R'}\circ\mathrel{(\sim)}.}
\end{fact}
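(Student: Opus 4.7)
The plan is to realize $R'$ as a transversal for a componentwise equivalence on $\mathcal U\times\mathcal V$, the equivalence being inherited from $(\sim)$ in each coordinate. Define $\approx$ on $\mathcal U\times\mathcal V$ by
\[
(a,b)\approx (a',b')\quad\Longleftrightarrow\quad a\sim a'\text{ in }\mathcal U\text{ and }b\sim b'\text{ in }\mathcal V.
\]
This is an equivalence relation, and by the axiom of choice I pick, from each $\approx$-class that meets $R$, a single representative lying in $R$; let $R'$ be the union of these representatives, so $R'\subseteq R$.

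Next I check the equality $(\sim)\circ R\circ(\sim)=(\sim)\circ R'\circ(\sim)$. The inclusion $\supseteq$ is immediate from $R'\subseteq R$. For $\subseteq$, suppose $(x,y)\in(\sim)\circ R\circ(\sim)$, witnessed by some $(u,v)\in R$ with $x\sim u$ and $v\sim y$. By construction $R'$ contains a representative $(a,b)$ of the $\approx$-class of $(u,v)$, so $u\sim a$ and $v\sim b$. Transitivity of $(\sim)$ in each sort then yields $x\sim a$ and $b\sim y$, so $(x,y)\in(\sim)\circ R'\circ(\sim)$.

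Finally I verify that $R'$ is reduced. Assume for contradiction that there is some $R''\subsetneq R'$ with $R'\subseteq(\sim)\circ R''\circ(\sim)$, and pick $(a,b)\in R'\setminus R''$. Then there exists $(u,v)\in R''$ with $a\sim u$ and $v\sim b$, which says exactly that $(u,v)\approx (a,b)$. But $R''\subseteq R'$ forces $(u,v)\in R'$, and $R'$ contains exactly one element from the $\approx$-class of $(a,b)$; hence $(u,v)=(a,b)$, contradicting $(a,b)\notin R''$.

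The argument is essentially bookkeeping around the axiom of choice: the only conceptual point is to recognize that the condition $R\subseteq(\sim)\circ R'\circ(\sim)$ is exactly the statement that $R'$ hits every $\approx$-class met by $R$, so picking precisely one such hit per class simultaneously gives the desired equality and the minimality underlying reducedness. I do not foresee a real obstacle; there is no need for a Zorn argument on chains of subrelations (which would be awkward because the property of covering $R$ modulo $(\sim)$ is not obviously preserved under intersections of chains), since the transversal construction supplies a minimal such $R'$ in one step.
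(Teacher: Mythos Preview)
Your proof is correct. The paper argues differently: it takes a strictly decreasing chain $\langle R_i:i<\lambda\rangle$ of subrelations of $R$ satisfying $R\subseteq(\sim)\circ R_i\circ(\sim)$, of maximal length, and lets $R'$ be its intersection; maximality is then invoked to conclude that $R'$ is reduced. Your transversal construction is more direct and more transparent: it identifies the condition $R\subseteq(\sim)\circ R'\circ(\sim)$ as ``$R'$ meets every $\approx$-class meeting $R$'' and simply chooses one point per class. This sidesteps the delicate point you flag, namely that the covering property $R\subseteq(\sim)\circ S\circ(\sim)$ is not obviously preserved under intersections of decreasing chains (for instance, if all points are $\sim$-equivalent and $R=\{(n,n):n<\omega\}$, the chain $R_n=\{(k,k):k\ge n\}$ has empty intersection). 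The paper's route is terser but leans on that limit step; yours trades a one-line Zorn-style sentence for an explicit choice that makes both the equality and the minimality visible at once.
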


\begin{proof}
  Let $\langle R_i:i<\lambda\rangle$ be a decreasing chain of maximal length of relations such that $R_i\mathrel{\subseteq}R\ \subseteq\ \mathrel{(\sim)}\circ \mathrel{R_i}\circ\mathrel{(\sim)}$.
  Let  $R'$ the intersection  of the chain.
  By maximality, $R'$ is a reduced relation.
  Finally, $\mathrel{(\sim)}\circ \mathrel{R}\circ\mathrel{(\sim)}\ \ =\ \mathrel{(\sim)}\circ \mathrel{R'}\circ\mathrel{(\sim)}$ is clear.
\end{proof}




\begin{fact}\label{fact_reduced_funct} 
  Let ${\EuScript H}={\EuScript F}^{\rm c}$.
  Let $R$ be a reduced relation that preserves of the truth of ${\EuScript H}$-formulas.
  Then $R$ is (the graph of) an injective map $f:{\rm dom}(R)\to{\rm range}(R)$.

  The same holds when ${\EuScript H}={\EuScript F}^{\rm c}_{\rm at}$ if ${\rm dom}(R)$ and ${\rm range}(R)$ are c-models.
\end{fact}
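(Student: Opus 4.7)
The plan is to show that $R$ is the graph of an injective function by leveraging reducedness. The single key claim is: if $\langle a,b_1\rangle,\langle a,b_2\rangle\in R$ then $b_1\sim b_2$ in ${\EuScript V}$. Once this is established, suppose toward contradiction that $b_1\neq b_2$; removing $\langle a,b_1\rangle$ yields $R'\subsetneq R$ with $R\subseteq(\sim)\circ R'\circ(\sim)$, since the removed pair is still covered via $\langle a,b_2\rangle\in R'$ together with $a\sim a$ and $b_1\sim b_2$. This contradicts reducedness, so $R$ is a function. Injectivity then follows by running the same argument on $R^{-1}$, which preserves the truth of ${\EuScript H}$-formulas by Fact~\ref{fact_Rinverse}, is reduced by the symmetry of $(\sim)$, and whose domain and range are again c-models when those of $R$ are.

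Everything reduces to the key claim. For ${\EuScript H}={\EuScript F}^{\rm c}$ this is immediate: the formula $x\sim_\tau y$ is itself continuous, the trivial truth ${\EuScript U}\models a\sim_\tau a$ transfers along $R$ applied componentwise to the tuple $\langle(a,a),(b_1,b_2)\rangle$, and letting $\tau$ vary yields the full $b_1\sim b_2$.

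The atomic case ${\EuScript H}={\EuScript F}^{\rm c}_{\rm at}$ is where the c-model hypothesis becomes essential, and it is the main obstacle. First I would apply atomic preservation pointwise: for each $d\in M:={\rm range}(R)$ pick $c\in{\rm dom}(R)$ with $\langle c,d\rangle\in R$, and observe that $\langle\tau(a,c),\tau(a,c)\rangle\in\Delta$ is an atomic continuous formula that transfers via the tuple $\langle(a,a,c),(b_1,b_2,d)\rangle$ to $\langle\tau(b_1,d),\tau(b_2,d)\rangle\in\Delta$ in ${\EuScript V}$. This gives $\tau(b_1,d)=\tau(b_2,d)$ for every $d\in M$ and every term $\tau$ of the appropriate sort. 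The delicate step is to promote this pointwise identity to all $z\in{\EuScript V}$: by contradiction, if $\tau(b_1,z_0)\neq\tau(b_2,z_0)$ for some $z_0\in{\EuScript V}$, I would use normality of $S^2$ to choose a compact neighborhood $C$ of $\langle\tau(b_1,z_0),\tau(b_2,z_0)\rangle$ disjoint from $\Delta$, and then $C'>C$ still disjoint from $\Delta$. The sentence $\exists z\,\langle\tau(b_1,z),\tau(b_2,z)\rangle\in C$ holds in ${\EuScript V}$ with parameters in the c-model $M$, so by the approximation characterization of c-models its weakening using $C'$ is realized in $M$. This produces $z^*\in M$ with $\langle\tau(b_1,z^*),\tau(b_2,z^*)\rangle\in C'$, which contradicts the pointwise identity $\tau(b_1,z^*)=\tau(b_2,z^*)\in\Delta$ together with $C'\cap\Delta=\emptyset$.
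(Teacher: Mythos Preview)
Your proof is correct and follows essentially the same strategy as the paper's: establish that two $R$-images of the same point must be $(\sim)$-equivalent, then invoke reducedness. Your forward transfer of $x\sim_\tau y$ in the ${\EuScript F}^{\rm c}$ case is slightly more direct than the paper's contrapositive via Proposition~\ref{prop_Fapprox}, and in the atomic case the paper finds the bad witness in ${\rm range}(R)$ first and transfers backward rather than establishing pointwise agreement on $M$ and then promoting, but these are minor reorganizations of the same argument.
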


\begin{proof}
  ${\EuScript H}={\EuScript F}^{\rm c}$. \ 
  Let $a\in{\rm dom}(R)$ and assume that $\langle a,b\rangle,\langle a,b'\rangle\in R$.
  To prove functionality suppose for a contradiction that $b\neq b'$.
  As $R$ is reduced, $b\not\sim b'$.
  Then $b\not\sim_\varepsilon b'$ for some $\varepsilon\in{\EuScript E}$.
  By Proposition~\ref{prop_Fapprox} $b,b'$ satisfies a continuous formula $\psi(x,x')$ that implies  $x\not\sim_\varepsilon x'$.
  Then $a,a$ also satisfies $\psi(x,x')$, a contradiction.
  This proves functionality, for injectivity apply the same argument to $R^{-1}$.

  ${\EuScript H}={\EuScript F}^{\rm c}_{\rm at}$. \ 
  Reason as above to obtain $b\not\sim_\varepsilon b'$, say $\varepsilon=\langle\tau,D\rangle$.
  Then the formula 
  
  \ceq{\hfill\neg\varphi(b,b',z)}{=}{\neg\bigwedge_{i=1}^n\langle\tau_i(b,z),\tau_i(b',z)\rangle\in D}.
  
  is consistent.
  Then it has a solution, say $c$, in ${\rm range}(R)^{|z|}$.
  Then $\neg\varphi(a,a,d)$ for some $d\mathrel{R} c$ which is a contradiction.
\end{proof}

Let $f \subseteq\ R$ be a reduced c-elementary relation such that $R\ \subseteq\ \mathrel{(\sim)}\circ \mathrel{f}\circ\mathrel{(\sim)}$.
Then $f$ is a function defined in exactly one representative $a$ of each $(\sim)$-equivalence class $[a]$ that intersects ${\rm dom}(R)$.

\begin{fact}\label{fact_Rcompletion}
  Let ${\EuScript H}$ be either ${\EuScript F}^{\rm c}$ or ${\EuScript F}^{\rm c}_{\rm at}$.
  The following are equivalent
  \begin{itemize}
    \item[1.] $R$ preserves the truth of ${\EuScript H}$-formulas;
    \item[2.] ${\rm Ccl}(R)$ preserves the truth of ${\EuScript H}$-formulas.
  \end{itemize}
\end{fact}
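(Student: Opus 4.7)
The plan is to note that $(2\Rightarrow1)$ is immediate from $R\subseteq{\rm Ccl}(R)$ and to concentrate on $(1\Rightarrow2)$. So assume $R$ preserves the truth of ${\EuScript H}$-formulas. Given a pair of tuples with $\langle a_i,b_i\rangle\in{\rm Ccl}(R)$ for each $i$, and $\varphi\in{\EuScript H}$ with ${\EuScript U}\models\varphi(a)$, the goal is to prove ${\EuScript V}\models\varphi(b)$.

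I would exploit that ${\EuScript V}$ is saturated. By Proposition~\ref{prop_approx} it suffices to verify ${\EuScript V}\models\varphi'(b)$ for every weakening $\varphi'>\varphi$; fix such a $\varphi'$ and interpolate $\varphi''$ with $\varphi'>\varphi''>\varphi$. The Perturbation Lemma (Corollary~\ref{corol_pertubation}) applied in ${\EuScript U}$ yields some $\varepsilon_1\in{\EuScript E}$ such that $x\sim_{\varepsilon_1}y\wedge\varphi(y)\rightarrow\varphi''(x)$, and applied in ${\EuScript V}$ yields some $\varepsilon_2\in{\EuScript E}$ such that $x\sim_{\varepsilon_2}y\wedge\varphi''(y)\rightarrow\varphi'(x)$. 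Choose a common refinement $\varepsilon$ of $\varepsilon_1,\varepsilon_2$.

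Now since each $\langle a_i,b_i\rangle\in{\rm Ccl}(R)$, I can pick, for each $i$, some $\langle a'_i,b'_i\rangle\in R$ with $a_i\sim_\varepsilon a'_i$ in ${\EuScript U}$ and $b_i\sim_\varepsilon b'_i$ in ${\EuScript V}$; by Fact~\ref{fact_productUniformity}, assembling componentwise produces tuples $a',b'$ with $\langle a',b'\rangle\in R$, $a\sim_{\varepsilon_1}a'$, and $b\sim_{\varepsilon_2}b'$. The three steps then chain: from ${\EuScript U}\models\varphi(a)$ and perturbation, ${\EuScript U}\models\varphi''(a')$; from preservation across $R$, ${\EuScript V}\models\varphi''(b')$; from perturbation again, ${\EuScript V}\models\varphi'(b)$, which is what was needed.

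For ${\EuScript H}={\EuScript F}^{\rm c}_{\rm at}$ the same scheme applies, since atomic continuous formulas are continuous and the Perturbation Lemma is available; alternatively, for $\varphi$ of the form $\tau(x)\in C$ one can sidestep Proposition~\ref{prop_approx} by using normality of $S$ to write $C$ as the intersection of its compact neighborhoods and invoking preservation of each weakening directly. The one place that requires attention is the move from componentwise membership in ${\rm Ccl}(R)$ to a simultaneous approximant $\langle a',b'\rangle$ in $R$, but this is precisely handled by the product-uniformity observation in Fact~\ref{fact_productUniformity} rather than constituting a real obstacle.
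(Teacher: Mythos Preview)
Your proof is correct and follows essentially the same approach as the paper's: interpolate a formula between $\varphi$ and $\varphi'$, invoke the Perturbation Lemma twice to obtain a suitable entourage, pick a nearby pair in $R$, and chain through, finishing with Proposition~\ref{prop_approx}. One small note: Fact~\ref{fact_productUniformity} concerns the relation $\sim$ rather than the individual entourages $\sim_\varepsilon$, so the passage from componentwise approximation to tuple-level $\sim_{\varepsilon_i}$ really uses that any tuple entourage is refined by a product of singleton entourages (a consequence of how $\sim_{\tau,D}$ is defined), a point the paper also leaves implicit.
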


\begin{proof}
  We prove the nontrivial implication, 1$\Rightarrow$2.
  Let $\varphi(x)\in{\EuScript H}$ and $\varphi''>\varphi'>\varphi$ be given.
  Let $\langle a,b\rangle\in{\rm Ccl}(R)$ and assume $\varphi(a)$.
  Let $\varepsilon\in{\EuScript E}$ be such that 

  \ceq{1.\hfill x\sim_\varepsilon y\wedge\varphi\phantom{'}(x)}{\rightarrow}{\varphi'(y)}

  \ceq{2.\hfill x\sim_\varepsilon y\wedge\varphi'(x)}{\rightarrow}{\varphi''(y)}

  By the definition of Cauchy completion, there are $a',b'\sim_\varepsilon a,b$ such that $\langle a',b'\rangle\in R$.
  Then from (1) we infer that $\varphi'(a')$.
  Note that $\varphi'(x)\in{\EuScript H}$, then $\varphi'(b')$. 
  Then $\varphi''(b)$ follows from (2).
  As $\varphi''>\varphi$ is arbitrary, the fact follows from Proposition~\ref{prop_approx}.
\end{proof}

\begin{definition}\label{def_ciso}
  Let $M$ and $N$ be p/c-elementary substructures of ${\EuScript U}$, respectively ${\EuScript V}$.
  A \emph{p/c-isomorphism\/} between $M$ and $M$ is a relation $R$ that preserves the truth of ${\EuScript F}^{\rm p/c}_{\rm at}$-formulas and $M={\rm dom}(R)$,  $N={\rm range}(R)$.
\end{definition}

 It is clear that p-isomorphisms coincide with ${\EuScript L}$-isomorphisms. 
 Indeed, in this case $R$ is a bijective function that preserve the truth of atomic and negated atomic formulas (by the same argument as in Fact~\ref{fact_Rinverse}).
 On the other hand, in the continuous case the term isomorphism may sound less appropriate.
 It is justified by the following fact.

\begin{fact}\label{fact_c_iso}
  Let $R$ be a c-isomorphism between $M$ and $N$.
  Let $f\subseteq{\rm Ccl}(R)$ be a reduced relation such that ${\rm Ccl}(R)\ =\ \mathrel{(\sim)}\circ \mathrel{f}\circ\mathrel{(\sim)}$.
  Then $f:{\rm Ccl}(M)\rightarrow{\rm Ccl}(N)$ is an injective map such that 
  
  \ceq{\hfill{\EuScript U}\models\varphi(a)}{\Leftrightarrow}{{\EuScript V}\models\varphi(fa)}\hfill for all $\varphi(x)\in{\EuScript F}^{\rm c}$ and $a\in{\rm dom}(f)^{|x|}$.
\end{fact}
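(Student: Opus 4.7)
The plan is to assemble Fact~\ref{fact_c_iso} from Fact~\ref{fact_Rcompletion}, Fact~\ref{fact_reduced_funct}, and an induction on formula syntax using Proposition~\ref{prop_approx} for the quantifier cases.

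I begin by applying Fact~\ref{fact_Rcompletion}: since $R$ preserves the truth of ${\EuScript F}^{\rm c}_{\rm at}$-formulas by Definition~\ref{def_ciso}, so does ${\rm Ccl}(R)$; as $f\subseteq{\rm Ccl}(R)$, $f$ inherits the same property, and Fact~\ref{fact_Rinverse} gives the converse implication. To justify the notation $f\colon{\rm Ccl}(M)\to{\rm Ccl}(N)$, I verify ${\rm dom}({\rm Ccl}(R))={\rm Ccl}(M)$: the inclusion $\subseteq$ is immediate, and for $\supseteq$, given $a\in{\rm Ccl}(M)$, I choose a Cauchy net $(a_\varepsilon)\subseteq M$ with $a_\varepsilon\sim_\varepsilon a$ and, using that $R$ preserves the continuous atomic formula $x\sim_\delta y$, lift it to a Cauchy net $(b_\varepsilon)\subseteq N$ whose limit $b\in{\rm Ccl}(N)$ satisfies $\langle a,b\rangle\in{\rm Ccl}(R)$. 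Combined with ${\rm Ccl}(R)=(\sim)\circ f\circ(\sim)$, this makes ${\rm dom}(f)$ a transversal of ${\rm Ccl}(M)/(\sim)$. That $f$ is then an injective function follows from the second-case argument of Fact~\ref{fact_reduced_funct} applied with ${\rm Ccl}(M)$ and ${\rm Ccl}(N)$ in the roles of the c-models -- and that these Cauchy completions are indeed c-models is a routine consequence of the perturbation lemma (Corollary~\ref{corol_pertubation}) together with Corollary~\ref{corol_pmax}, since continuous truth is $(\sim)$-invariant by Fact~\ref{fact_continuous}.

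Next I lift preservation from ${\EuScript F}^{\rm c}_{\rm at}$ to all of ${\EuScript F}^{\rm c}$ by induction on the syntax of $\varphi(x)$. Atomic and Boolean cases are handled above. For a sort-${\sf H}$ existential $\exists y\,\psi(a,y)$ with $a\in{\rm dom}(f)^{|x|}$ and ${\EuScript U}\models\exists y\,\psi(a,y)$, Corollary~\ref{corol_pmax} applied to the c-model ${\rm Ccl}(M)$ gives, for each weakening $\psi'>\psi$, a witness $b_{\psi'}\in{\rm Ccl}(M)$ with ${\EuScript U}\models\psi'(a,b_{\psi'})$. After replacing $b_{\psi'}$ by its $(\sim)$-representative in ${\rm dom}(f)$ (which preserves the truth of $\psi'$ by Fact~\ref{fact_continuous}), the inductive hypothesis applied to $\psi'$ -- whose quantifier depth equals that of $\psi$ and hence is strictly less than that of $\exists y\,\psi$ -- yields ${\EuScript V}\models\psi'(fa,fb_{\psi'})$. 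The arbitrariness of $\psi'>\psi$ gives ${\EuScript V}\models\{\exists y\,\psi(fa,y)\}'$, and Proposition~\ref{prop_approx} applied in the p-saturated ${\EuScript V}$ returns ${\EuScript V}\models\exists y\,\psi(fa,y)$. The universal case is symmetric: one reduces ${\EuScript V}\models\forall y\,\psi(fa,y)$ via ${\rm Ccl}(N)$ to checking ${\EuScript V}\models\psi'(fa,d)$ for each $\psi'>\psi$ and each $d\in{\rm range}(f)$, then applies the IH at a ${\rm dom}(f)$-preimage of $d$. The sort-${\sf S}$ quantifiers are treated identically.

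The main obstacle I expect is the bookkeeping in the quantifier step: the witness $b_{\psi'}$ genuinely depends on the approximation $\psi'>\psi$, which precludes applying the inductive hypothesis to $\psi$ at a single witness, and the passage from approximate to exact existence in ${\EuScript V}$ must go through Proposition~\ref{prop_approx} and so exploits the p-saturation of the target monster.
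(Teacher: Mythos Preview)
Your plan follows the same route as the paper's (very terse) proof: invoke Fact~\ref{fact_reduced_funct} for injectivity, then induct on syntax for elementarity. Your expansion of the quantifier step via approximations and Proposition~\ref{prop_approx} is the right way to unpack what the paper calls ``induction as in the classical case.''

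There is one genuine slip. In step 2 you write that $R$ ``preserves the continuous atomic formula $x\sim_\delta y$,'' and use this to transfer the Cauchy property from $(a_\varepsilon)$ to $(b_\varepsilon)$. But $x\sim_{\tau,D} y$ is $\bigwedge_i\forall z\,\langle\tau_i(x,z),\tau_i(y,z)\rangle\in D$, which carries a universal quantifier of sort ${\sf H}$ and is not in ${\EuScript F}^{\rm c}_{\rm at}$; a relation preserving only atomic continuous formulas need not preserve it. The repair is easy but different from what you wrote: from $a_\varepsilon\sim_\delta a_{\varepsilon'}$ you get the atomic fact $\langle\tau_i(a_\varepsilon,c),\tau_i(a_{\varepsilon'},c)\rangle\in D$ for every $c\in M$; atomic preservation transfers this to every $c'\in N={\rm range}(R)$; then you use that $N$ is a c-model of ${\EuScript V}$ (a Tarski--Vaught argument, cf.\ Theorem~\ref{thm_Tarski_Vaught}) to upgrade to all $c'\in{\EuScript V}$.

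Two smaller citation issues: the fact that ${\rm Ccl}(M)$ and ${\rm Ccl}(N)$ are c-models is Remark~\ref{rem_Tarski_Vaught}, not Corollary~\ref{corol_pmax}; and when you appeal to Fact~\ref{fact_reduced_funct} in its second case, note that ${\rm dom}(f)$ and ${\rm range}(f)$ are transversals rather than c-models themselves --- the argument still goes through by finding the witness $c$ in the c-model ${\rm Ccl}(N)$ and then passing to a $(\sim)$-representative in ${\rm range}(f)$ via Fact~\ref{fact_continuous}.
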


\begin{proof}
  From Fact~\ref{fact_reduced_funct} we obtain that $f:{\rm Ccl}(M)\rightarrow{\rm Ccl}(N)$ is an injective map.
  To prove $\varphi(a)\leftrightarrow\varphi(fa)$ we reason by induction as in the classical case.
\end{proof}

Note that, though $f:{\rm Ccl}(M)\rightarrow{\rm Ccl}(N)$ above is neither total nor surjective, ${\rm dom}(f)$ and ${\rm range}(f)$ intersect every $(\sim)$-equivalence class.

\section{Elimination of quantifiers of sort \textsf{S}}\label{cIelimination}

We write \emph{${\EuScript F}^{\rm p/c}_{{\sf S}{\rm qf}}$} for the set of positive/continuous formulas without quantifiers of sort ${\sf S}$.
In~\cite{clcl} it is proved that ${\EuScript F}^{\rm p}_{{\sf S}{\rm qf}}$ is a p-dense set.
Here we adapt the argument to the continuous case.

\begin{proposition}\label{prop_Iqf_elim}
  Every relation of small cardinality $R$ that preserves the truth of formulas in ${\EuScript F}^{\rm p/c}_{{\sf S}{\rm qf}}$ extends to p/c-automorphisms of ${\EuScript U}$.
\end{proposition}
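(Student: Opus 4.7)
I would prove this by the standard back-and-forth construction, after one preliminary observation specific to positive/continuous logic.

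The key preliminary point is that preservation of ${\EuScript F}^{\rm p/c}_{{\sf S}{\rm qf}}$-truth is automatically symmetric: if $R$ has it, so does $R^{-1}$. This rests on the analog of Fact~\ref{fact_max_cons_L} with ${\EuScript F}^{\rm p/c}$ replaced by ${\EuScript F}^{\rm p/c}_{{\sf S}{\rm qf}}$, which holds by exactly the same proof, because strong negation sends ${\EuScript F}^{\rm p/c}_{{\sf S}{\rm qf}}$ to itself (the quantifiers of sort ${\sf S}$ would be swapped, but both are absent by assumption). Consequently ${\EuScript F}^{\rm p/c}_{{\sf S}{\rm qf}}\mbox{-tp}(a/A)$ is maximally consistent among subsets of ${\EuScript F}^{\rm p/c}_{{\sf S}{\rm qf}}(A)$. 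If $\bar a,\bar b$ enumerate ${\rm dom}(R)$ and ${\rm range}(R)$, preservation yields ${\EuScript F}^{\rm p/c}_{{\sf S}{\rm qf}}\mbox{-tp}(\bar a)\subseteq{\EuScript F}^{\rm p/c}_{{\sf S}{\rm qf}}\mbox{-tp}(\bar b)$, and maximality forces equality.

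The extension step is then routine. Given a new element $a\in{\EuScript U}$ of the home sort, I would realize in ${\EuScript U}$ the type $q(y)=\big\{\varphi(\bar b,y):\varphi\in{\EuScript F}^{\rm p/c}_{{\sf S}{\rm qf}}\text{ and }\varphi(\bar a,a)\text{ holds}\big\}$. Since $R$ is small and ${\EuScript U}$ is p-saturated, finite consistency suffices: a finite fragment $\varphi_1(\bar a,a),\dots,\varphi_n(\bar a,a)$ is witnessed on the range side by applying preservation to the ${\EuScript F}^{\rm p/c}_{{\sf S}{\rm qf}}$-formula $\exists y\bigwedge_k\varphi_k(\bar a,y)$, quantifiers of sort ${\sf H}$ being permitted. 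Elements of $S$ force themselves as images because $\eta\in\{\alpha\}$ is already atomic in ${\EuScript F}^{\rm p/c}_{{\sf S}{\rm qf}}$. The back step is analogous, using the bidirectional preservation just established. Iterating along an enumeration of ${\EuScript U}$ produces a chain of ${\EuScript F}^{\rm p/c}_{{\sf S}{\rm qf}}$-preserving relations whose union $R^*$ has ${\rm dom}(R^*)={\rm range}(R^*)={\EuScript U}$ and preserves ${\EuScript F}^{\rm p/c}_{{\sf S}{\rm qf}}\supseteq{\EuScript F}^{\rm p/c}_{\rm at}$, so by Definition~\ref{def_ciso} it is a p/c-automorphism extending $R$. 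The only non-routine step is the symmetry of preservation, and it is precisely there that the restriction to formulas without sort-${\sf S}$ quantifiers is decisive, since that is what keeps strong negation inside ${\EuScript F}^{\rm p/c}_{{\sf S}{\rm qf}}$.
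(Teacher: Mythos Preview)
Your back-and-forth argument is correct and is essentially the paper's approach; the paper differs only cosmetically in that, for the continuous case, it reduces the relation to an injective function at each stage via $(\sim)$, and it leaves your symmetry observation (that strong negation stays inside ${\EuScript F}^{\rm p/c}_{{\sf S}{\rm qf}}$) implicit in the ``odd $i$'' step. One small point: your aside about elements of $S$ forcing themselves is unnecessary, since by the conventions of Section~\ref{morphisms} the relation $R$ lives on the home sort only and the ${\sf S}$-sort is fixed.
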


\begin{proof}
  We prove the proposition in the continuous case.
  A simplified version of the same argument applies to the positive case.

  We construct by back-and-forth a sequence $f_i:{\EuScript U}\rightarrow{\EuScript U}$ of reduced relations (i.e.\@ injective maps) that preserve the truth formulas in ${\EuScript F}^{\rm c}_{{\sf S}{\rm qf}}$.
  Finally we set 
  
  \ceq{\hfill R}{=}{\bigcup_{i<|{\EuScript U}|}(\sim)\circ f_i\circ(\sim).}

  We will ensure that ${\EuScript U}={\rm dom}(R)={\rm range}(R)$, hence that $R$ is a c-automorphism of ${\EuScript U}$.

  Let $i$ be even.
  Let $a$ be an enumeration of ${\rm dom}(f_n)$.
  Let $b\in{\EuScript U}$. 
  Let $p(x,y)=\mbox{c-tp}(a,b)$.
  By c-elementarity the type $p(f_ia,y)$ is finitely satisfied.
  Let $c\in{\EuScript U}$ realize $p(f_ia,y)$.
  Let $f_{i+1}$ be a reduced relation such that $f_{i+1}\ \subseteq\ f_i\cup\{\langle b,c\rangle\}\ \subseteq\ (\sim)\circ f_{i+1}\circ(\sim)$.

  When $i$ is odd we proceed similarly with $f_i^{-1}$ for $f_i$.
  Limit stages are obvious.
\end{proof}

\begin{corollary}\label{corol_cLcomplete}
  Let $p(x)$ and $q(x)$ be as in (i) or (ii) below.
  Then $p(x)\leftrightarrow q(x)$.
  \begin{itemize}
    \item[i.] $p(x)={\EuScript F}^{\rm p}_{{\sf S}{\rm qf}}\mbox{-tp}(a)$ and $q(x)=\mbox{tp}(a)$.
    \item[ii.] $p(x)={\EuScript F}^{\rm c}_{{\sf S}{\rm qf}}\mbox{-tp}(a)$ and $q(x)=\mbox{c-tp}(a)$.
  \end{itemize}
\end{corollary}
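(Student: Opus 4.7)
The implication $q(x)\rightarrow p(x)$ is trivial in both cases, since ${\EuScript F}^{\rm p}_{{\sf S}{\rm qf}}\subseteq{\EuScript L}$ and ${\EuScript F}^{\rm c}_{{\sf S}{\rm qf}}\subseteq{\EuScript F}^{\rm c}$, so every formula of $p$ belongs to $q$. For the nontrivial direction $p(x)\rightarrow q(x)$ the plan is to show that any realization of $p(x)$ is the image of $a$ under a p/c-automorphism of ${\EuScript U}$, and then to read off membership in $q(x)$ from the properties of such automorphisms established in Section~\ref{morphisms}.

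Fix $b\in{\EuScript U}^{|x|}$ with $b\models p(x)$ and consider the relation $R=\{\langle a_i,b_i\rangle : i<|x|\}\subseteq {\EuScript U}\times{\EuScript U}$. By the hypothesis $b\models p(x)$, for every $\varphi(x)\in{\EuScript F}^{\rm p/c}_{{\sf S}{\rm qf}}$ with ${\EuScript U}\models\varphi(a)$ we have $\varphi(x)\in p$, hence ${\EuScript U}\models\varphi(b)$; in other words, $R$ preserves the truth of formulas in ${\EuScript F}^{\rm p/c}_{{\sf S}{\rm qf}}$. Since $|R|\le|x|$ is small, Proposition~\ref{prop_Iqf_elim} (taking ${\EuScript V}={\EuScript U}$) extends $R$ to a p/c-automorphism $\sigma$ of ${\EuScript U}$.

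In case (i), the remark in Section~\ref{morphisms} that p-isomorphisms coincide with ${\EuScript L}$-isomorphisms tells us that $\sigma$ is an ${\EuScript L}$-automorphism; since $\sigma(a)=b$, this gives $\mbox{tp}(a)=\mbox{tp}(b)$, whence $b\models q(x)$. In case (ii), let $f\subseteq\sigma$ be a reduced relation with $\sigma\ =\ (\sim)\circ f\circ(\sim)$; by Fact~\ref{fact_c_iso}, $f$ is an injective map that preserves and reflects the truth of every formula in ${\EuScript F}^{\rm c}$. Since $\langle a,b\rangle\in R\subseteq\sigma$, there exist $a',b'$ with $a\sim a'$, $b'\sim b$ and $fa'=b'$; combining Fact~\ref{fact_c_iso} with the $(\sim)$-invariance of continuous formulas (Fact~\ref{fact_continuous}) yields $\varphi(a)\leftrightarrow\varphi(b)$ for every $\varphi\in{\EuScript F}^{\rm c}$, so $b\models q(x)$.

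The argument is essentially a standard automorphism-from-back-and-forth pattern, and the only point where care is needed is the invocation of Proposition~\ref{prop_Iqf_elim}: one must check that the back-and-forth construction performed there with ${\EuScript V}={\EuScript U}$ actually produces an automorphism of ${\EuScript U}$ rather than a morphism into a twin monster. This is inherited from the p-saturation of ${\EuScript U}$ and from the facts already collected about reduced relations and Cauchy completion. Everything past that step is routine manipulation of the definitions from Sections~\ref{Cauchy} and~\ref{morphisms}.
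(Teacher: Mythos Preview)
Your proof is correct and follows the same strategy as the paper's: invoke Proposition~\ref{prop_Iqf_elim} to extend the correspondence between $a$ and $b$ to a p/c-automorphism, then read off equality of types. The only presentational difference is in case~(ii): the paper first collapses $\sim$-equivalent entries of $a$ (and correspondingly of $b$) so that $a'\mapsto b'$ is a genuine function before invoking the proposition, whereas you feed the raw relation $R$ into Proposition~\ref{prop_Iqf_elim} and handle the $\sim$-ambiguity afterwards via Facts~\ref{fact_c_iso} and~\ref{fact_continuous}.

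One small point worth tightening: the phrase ``in other words, $R$ preserves the truth'' skips a step, since preservation in the sense of~(\#) of Section~\ref{morphisms} must hold for \textit{every} tuple $\langle c,d\rangle$ with coordinates in $R$, not only for the original pair $\langle a,b\rangle$. This follows by a routine substitution argument (any such $\langle c,d\rangle$ is $\langle a\circ\sigma,\, b\circ\sigma\rangle$ for some index map~$\sigma$), and it is precisely what the paper's pre-reduction of $a,b$ is designed to make automatic. Your closing caveat about twin monsters is unnecessary: Proposition~\ref{prop_Iqf_elim} is already stated and proved for automorphisms of~${\EuScript U}$.
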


\begin{proof}
  (i) \ 
  By Proposition~\ref{prop_Iqf_elim} as p-isomorphisms are ${\EuScript L}$-isomorphisms.

  (ii) \ 
  Only $\rightarrow$ requires a proof.
  If $a$ contains entries $a_i\sim a_j$, replace $a_j$ with $a_i$.
  Note that the tuple $a'$ obtained in this manner has the same c-type of $a$.
 
  Let $b\models p(x)$ and let $b'$ obtained with the same procedure as $a'$.
  
  Then the function that maps $a'\mapsto b'$ preserves the truth of ${\EuScript F}^{\rm c}_{{\sf S}{\rm qf}}$-formulas.
  Then $f$ extends to an c-automorphism.
  As every c-automorphism is c-elementary the corollary follows.
\end{proof}

\begin{proposition}\label{prop_cLHapprox1}
  The set ${\EuScript F}^{\rm p/c}_{{\sf S}{\rm qf}}$ is p/c-dense modulo $T$.
\end{proposition}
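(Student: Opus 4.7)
The plan is to imitate the positive-case argument from \cite{clcl}, using Corollary~\ref{corol_cLcomplete} as the key input. Fix $\varphi(x)\in{\EuScript F}^{\rm p/c}$ and a weakening $\varphi'>\varphi$; the goal is to produce an interpolant $\psi(x)\in{\EuScript F}^{\rm p/c}_{{\sf S}{\rm qf}}$ with $\varphi(x)\rightarrow\psi(x)\rightarrow\varphi'(x)$.

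The first step converts complementary tuples into ${\EuScript F}^{\rm p/c}_{{\sf S}{\rm qf}}$-formulas. For each $a\in{\EuScript U}^{|x|}$ with $\neg\varphi(a)$, let $p(x)$ denote the ${\EuScript F}^{\rm p/c}_{{\sf S}{\rm qf}}$-type of $a$. By Corollary~\ref{corol_cLcomplete}, $p(x)$ is equivalent to the full p/c-type of $a$, so $p(x)\rightarrow\neg\varphi(x)$; by Proposition~\ref{prop_approx} the same holds with $p'(x)$ in place of $p(x)$. Ranging over all such $a$,
\[\neg\varphi(x)\ \leftrightarrow\ \bigvee_{p'(x)\rightarrow\neg\varphi(x)}p'(x),\]
where $p$ runs over the maximally consistent ${\EuScript F}^{\rm p/c}_{{\sf S}{\rm qf}}$-types. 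Fact~\ref{fact_compactness_imp}(i) shrinks each disjunct to a single approximation: there is $\chi(x)\in{\EuScript F}^{\rm p/c}_{{\sf S}{\rm qf}}$ and a weakening $\chi'>\chi$ with $\chi'(x)\rightarrow\neg\varphi(x)$. Lemma~\ref{lem_interpolation}(1) then interpolates a strong negation $\tilde\chi\perp\chi$ with $\chi\rightarrow\neg\tilde\chi\rightarrow\chi'$; since strong negation only swaps connectives and quantifiers, $\tilde\chi\in{\EuScript F}^{\rm p/c}_{{\sf S}{\rm qf}}$. Negating both sides yields
\[\varphi(x)\ \leftrightarrow\ \bigwedge\big\{\tilde\chi(x)\in{\EuScript F}^{\rm p/c}_{{\sf S}{\rm qf}}\ :\ \varphi(x)\rightarrow\tilde\chi(x)\big\}.\]

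The final step is a direct appeal to Fact~\ref{fact_compactness_imp}(ii): given $\varphi'>\varphi$, finitely many conjuncts $\tilde\chi_1,\dots,\tilde\chi_n$ already witness $\bigwedge_{i\le n}\tilde\chi_i(x)\rightarrow\varphi'(x)$, while $\varphi(x)\rightarrow\bigwedge_{i\le n}\tilde\chi_i(x)$ holds by construction. Their conjunction is the required $\psi(x)\in{\EuScript F}^{\rm p/c}_{{\sf S}{\rm qf}}$.

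The argument is structurally identical in the positive and continuous cases; the only point demanding care is the invocation of Corollary~\ref{corol_cLcomplete}(ii), whose proof already absorbs the subtlety caused by the lack of equality in the home sort by first collapsing $(\sim)$-equivalent coordinates and then building a c-automorphism. Once that corollary is granted, I expect no further obstacle: approximations, strong negations and positive compactness combine exactly as in the positive setting.
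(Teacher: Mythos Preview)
Your proof is correct and follows essentially the same route as the paper's: both use Corollary~\ref{corol_cLcomplete} together with Proposition~\ref{prop_approx} to cover $\neg\varphi(x)$ by approximated ${\EuScript F}^{\rm p/c}_{{\sf S}{\rm qf}}$-types, then Fact~\ref{fact_compactness_imp} and Lemma~\ref{lem_interpolation} to pass to negations of single ${\EuScript F}^{\rm p/c}_{{\sf S}{\rm qf}}$-formulas, and finally Fact~\ref{fact_compactness_imp}(ii) to extract a finite conjunction as the interpolant. Your write-up is in fact slightly more explicit than the paper's (e.g.\ in noting that strong negation preserves membership in ${\EuScript F}^{\rm p/c}_{{\sf S}{\rm qf}}$).
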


\begin{proof}
  Let $\varphi(x)$ be a positive formula.
  We need to prove that for every $\varphi'>\varphi$ there is some formula $\psi(x)\in{\EuScript F}^{\rm p/c}_{{\sf S}{\rm qf}}$ such that $\varphi(x)\rightarrow\psi(x)\rightarrow\varphi'(x)$.
  By Corollary~\ref{corol_cLcomplete} and Proposition~\ref{prop_approx}

  \ceq{\hfill\neg\varphi(x)}{\rightarrow}{\bigvee_{p'(x)\rightarrow\neg\varphi(x)}p'(x)}

  where $p(x)$ ranges over the maximally consistent ${\EuScript F}^{\rm p/c}_{{\sf S}{\rm qf}}$-types.
  By Fact~\ref{fact_compactness_imp} and Lemma~\ref{lem_interpolation}

  \ceq{\hfill\neg\varphi(x)}{\rightarrow}{\bigvee_{\neg\tilde{\psi}(x)\rightarrow\neg\varphi(x)}\neg\tilde{\psi}(x),}

  where $\tilde{\psi}(x)\in{\EuScript F}^{\rm p/c}_{{\sf S}{\rm qf}}$.
  Equivalently,

  \ceq{\hfill\varphi(x)}{\leftarrow}{\bigwedge_{\tilde{\psi}(x)\leftarrow\varphi(x)}\tilde{\psi}(x).}

  By compactness, see Fact~\ref{fact_compactness_imp}, for every $\varphi'>\varphi$ there are some finitely many $\tilde{\psi}_i(x)\in{\EuScript F}^{\rm p}_{{\sf S}{\rm qf}}$ such that

  \ceq{\hfill\varphi'(x)}{\leftarrow}{\bigwedge_{i=1,\dots,n}\tilde{\psi}_i(x)\ \ \leftarrow\ \ \varphi(x)}

  which yields the interpolant required by the proposition.
\end{proof}

\section{The Tarski-Vaught test and the L\"owenheim-Skolem theorem}

The following proposition is our version of the Tarski-Vaught test.

\begin{theorem}\label{thm_Tarski_Vaught}
  Let $M$ be a subset of ${\EuScript U}$.
  Let ${\EuScript H}$ be a p-dense set of positive formulas.
  Then the following are equivalent
  \begin{itemize}
    \item[1.] $M$ is a p-model;
    \item[2.] for every formula $\psi(x)\in{\EuScript H}(M)$
    
    \noindent\kern-\leftmargin
    \ceq{\hfill\exists x\,\psi(x)}{\Rightarrow}
    {\textrm{ for every }\psi'>\psi\textrm{ there is an }a\in M\textrm{ such that }\psi'(a);}
  
    \item[3.] for every formula $\psi(x)\in{\EuScript H}(M)$
    
    \noindent\kern-\leftmargin
    \ceq{\hfill \exists x\,\neg\psi(x)}{\Rightarrow}
    {\textrm{ there is an }a\in M\textrm{ such that }\neg\psi(a).}
  \end{itemize}
  If ${\EuScript H}$ is a c-dense set of continuous formulas then (2) and (3) are equivalent to
  \begin{itemize}
    \item[1$'$.] ${\rm Ccl}(M)$ is a c-model.
  \end{itemize}
  Moreover, if $M$ is a substructure, then (1$'$), (2) and (3) are equivalent to
  \begin{itemize}
    \item[1$''$.] $M$ is a c-model.
  \end{itemize}
\end{theorem}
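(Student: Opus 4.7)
I plan to establish $(1) \Leftrightarrow (2) \Leftrightarrow (3)$ in the positive case through the cycle $(1) \Rightarrow (2) \Leftrightarrow (3)$ and $(2) \Rightarrow (1)$, then adapt the argument to the continuous variants. For $(1) \Rightarrow (2)$, the p-model characterization $\varphi \Leftrightarrow M \models \{\varphi\}'$ applied to $\exists x\,\psi(x)$ forces $M \models \exists x\,\psi'(x)$ for every $\psi' > \psi$, and p-elementarity lifts a witness $a \in M$ back to ${\EuScript U}$. For $(2) \Rightarrow (3)$, Proposition~\ref{prop_Fapprox}(i) writes $\neg \psi$ as a disjunction of ${\EuScript H}$-formulas $\chi$ with some $\chi' > \chi$ implying $\neg \psi$; a witness of $\exists x\,\neg \psi$ satisfies some such $\chi$, so $\exists x\,\chi(x)$ holds and (2) applied with the weakening $\chi'$ gives $a \in M$ with $\chi'(a)$, hence $\neg \psi(a)$. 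The converse $(3) \Rightarrow (2)$ is dual: given $\exists x\,\psi(x)$ with $\psi \in {\EuScript H}$ and $\psi' > \psi$, Lemma~\ref{lem_interpolation} supplies $\tilde\psi \perp \psi$ with $\psi \to \neg \tilde\psi \to \psi'$, Proposition~\ref{prop_Fapprox}(ii) supplies $\rho \in {\EuScript H}$ with $\tilde\psi \to \rho$ such that a witness of $\psi$ satisfies $\neg \rho$, and (3) then produces $a \in M$ with $\neg \rho(a)$, whence $\psi'(a)$ via the chain $\neg \rho \to \neg \tilde\psi \to \psi'$.

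The remaining implication $(2) \Rightarrow (1)$ proceeds in two steps. First, $M$ is a substructure: for $f \in {\EuScript L}_{\sf H}$ of arity $n$ and $a \in M^n$, the Morleyzation atom $r_{y = f(a)}(y)$ has no ${\sf S}$-atomic subformula, so by density is ${\EuScript H}$-equivalent to some $\chi$; since $\exists y\,\chi(y)$ holds trivially, (2) yields $c \in M$ with $c = f(a)$, and constants are handled analogously. Second, I would prove $\varphi(a) \Leftrightarrow M \models \{\varphi(a)\}'$ for every $\varphi \in {\EuScript F}^{\rm p}$ and $a \in M^{|x|}$ by induction on $\varphi$. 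Atomic and propositional cases are routine. For the existential step $\exists x\,\psi(x) \Rightarrow M \models \{\exists x\,\psi\}'$, fix $\psi' > \psi$ and choose intermediates $\psi' > \psi'' > \psi''' > \psi$; density furnishes $\chi \in {\EuScript H}$ with $\psi \to \chi \to \psi'''$, and the equivalence $\chi \leftrightarrow \{\chi\}'$ from Proposition~\ref{prop_approx} combined with Fact~\ref{fact_compactness_imp} yields a weakening $\chi^* > \chi$ with $\chi^* \to \psi''$. From $\exists x\,\chi(x)$ (a consequence of $\exists x\,\psi$), (2) produces $a \in M$ with $\chi^*(a)$, hence $\psi''(a)$ in ${\EuScript U}$; the inductive hypothesis on $\psi''$ delivers $M \models \{\psi''(a)\}'$, whence $M \models \psi'(a)$ since $\psi' > \psi''$. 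The reverse direction collects witnesses $a_{\psi'} \in M$ across all $\psi' > \psi$ into a finitely satisfiable type (using the directedness of weakenings), realized in ${\EuScript U}$ by $\omega$-saturation, with Proposition~\ref{prop_approx} recovering $\exists x\,\psi$. The universal step is dual, using Proposition~\ref{prop_Fapprox}(ii) to derive a contradiction from any $b \in {\EuScript U}$ that would violate $\forall x\,\psi$.

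For the continuous case, $(2) \Leftrightarrow (3)$ carries over verbatim. To prove $(2) \Rightarrow (1')$, observe that the continuous formula $c \sim_\varepsilon f(b)$ is trivially existentially satisfied (by $c = f(b)$), so (2) yields $c \in M$ with $c \sim_{\varepsilon'} f(b)$ for every $\varepsilon' > \varepsilon$, placing $f(b)$ in ${\rm Ccl}(M)$; the Perturbation Lemma (Corollary~\ref{corol_pertubation}) together with the triangle inequality for $\sim$ then makes ${\rm Ccl}(M)$ a substructure. Its c-elementarity is established by the same inductive scheme, with Corollary~\ref{corol_pertubation} used to transfer satisfaction between ${\rm Ccl}(M)$-parameters and their $M$-approximations. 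The converse $(1') \Rightarrow (2)$ perturbs a witness supplied by the c-model ${\rm Ccl}(M)$ into $M$ via the same corollary. When $M$ is additionally a substructure it is Cauchy-dense in ${\rm Ccl}(M)$, and the Perturbation Lemma transfers the approximate equivalence $\varphi \Leftrightarrow M \models \{\varphi\}'$ between them, giving $(1') \Leftrightarrow (1'')$. The main obstacle throughout is the bookkeeping in the existential step: the syntactic weakening relation $>$ is strictly stronger than mere implication, so the natural moves via Lemma~\ref{lem_interpolation}, Proposition~\ref{prop_Fapprox}, and density must be composed with Fact~\ref{fact_compactness_imp} and the approximation equivalence $\chi \leftrightarrow \{\chi\}'$ to convert implications back into weakenings that hypotheses (2) or (3) can directly consume.
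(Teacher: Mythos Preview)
Your overall plan matches the paper's: establish $(2)\Leftrightarrow(3)$ via Proposition~\ref{prop_Fapprox} and Lemma~\ref{lem_interpolation}, show $M$ is a substructure, then prove p-elementarity by induction on formula complexity. The paper's inductive statement is the one-directional $M\models\varphi(a)\Rightarrow\varphi(a)$ (the definition of p-model), whereas you carry the biconditional $\varphi(a)\Leftrightarrow M\models\{\varphi(a)\}'$ (the characterization from the Fact in Section~\ref{monster}); both are correct, and the extra direction you prove is recovered afterward anyway, so this is a cosmetic difference. Your continuous-case outline (perturbing witnesses between $M$ and ${\rm Ccl}(M)$ via Corollary~\ref{corol_pertubation}) is also in the spirit of the paper, though the paper routes $(1')\Leftrightarrow(1'')$ through $(2)$ rather than arguing it directly.

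There is one genuine gap, in your substructure step for $(2)\Rightarrow(1)$. You write that density supplies $\chi\in{\EuScript H}$ equivalent to the Morleyzation atom $r_{y=f(a)}(y)$, and that ``(2) yields $c\in M$ with $c=f(a)$''. But (2) only provides, for each weakening $\chi'>\chi$, some $c_{\chi'}\in M$ with $\chi'(c_{\chi'})$; it does not give a single $c$ satisfying $\chi$ itself. This would be harmless if $\chi$ had no ${\sf S}$-atomic subformulas (so that $\chi$ is its own unique weakening), but density does not guarantee that: $\chi$ is merely \textit{logically} equivalent to $r_{y=f(a)}$, not syntactically free of ${\sf S}$-atoms, and its proper weakenings need not imply $y=f(a)$. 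The repair is easy: use (3) instead, applied to an ${\EuScript H}$-interpolant $\theta$ of the atom $r_{y\neq f(a)}(y)$, so that $\exists y\,\neg\theta(y)$ yields $c\in M$ with $\neg\theta(c)$, i.e.\ $c=f(a)$. This is exactly what the paper does implicitly when it invokes the classical Tarski-Vaught test for ${\EuScript L}_{\sf H}$.
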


\begin{proof}
  (1$\Rightarrow$2) \ 
  Assume $\exists x\,\psi(x)$ and let $\psi'>\psi$ be given.
  By Lemma~\ref{lem_interpolation} there is some $\tilde{\psi}\perp\psi$ such that  $\psi(x)\rightarrow\neg\tilde{\psi}(x)\rightarrow\psi'(x)$.
  Then $\neg\forall x\,\tilde{\psi}(x)$ hence, by (1), $M\models\neg\forall x\,\tilde{\psi}(x)$.
  Then $M\models\neg\tilde{\psi}(a)$ for some $a\in M$. Hence $M\models\psi'(a)$ and $\psi'(a)$ follows from (1).

  (2$\Rightarrow$3) \ 
  Assume (2) and let $\psi(x)\in{\EuScript H}(M)$ be such that $\exists x\,\neg\psi(x)$.
  By Proposition~\ref{prop_Fapprox}.i, there are a consistent $\varphi(x)\in{\EuScript H}(M)$ and some $\varphi'>\varphi$ such that $\varphi'(x)\rightarrow\neg\psi(x)$.
  Then (3) follows.

  (3$\Rightarrow$2) \ 
  Let $\psi'>\psi$ for some $\psi(x)\in{\EuScript H}(M)$.
  Let $\tilde{\psi}\bot\psi$ such that $\psi(x)\rightarrow\neg\tilde{\psi}(x)\rightarrow\psi'(x)$.
  By Proposition~\ref{prop_Fapprox}.ii, $\neg\varphi(x)\rightarrow\neg\tilde{\psi}(x)$ for some $\varphi(x)\in{\EuScript H}(M)$ such that $\neg\varphi(x)$ is consistent.
  Then (2) follows from (3).
  
  (2$\Rightarrow$1) \ 
  Assume (2).
  By the classical Tarski-Vaught test $M\preceq_{\sf H}{\EuScript U}$.
  Then $M$ is the domain of a substructure of ${\EuScript U}$.
  Then $M\models\varphi(a)\ \Rightarrow\ \varphi(a)$ holds for every atomic formula $\varphi(x)$ and for every $a\in M^{|x|}$.
  Now, assume inductively
  
  \ceq{\hfill M\models\varphi(a,b)}{\Rightarrow}{\varphi(a,b).}

  Using (2) and the induction hypothesis we prove that

  \ceq{\hfill M\models\exists y\,\varphi(a,y)}{\Rightarrow}{\forall y\,\varphi(a,y)}.

  Indeed, for any $\varphi'>\varphi$,

  \ceq{\hfill M\models\exists y\,\varphi(a,y)}
  {\Rightarrow}{M\models\exists y\,\psi(a,y)}\hfill for some $\psi\in{\EuScript H}$ such that $\varphi\rightarrow\psi\rightarrow\varphi'$
  
  \ceq{}
  {\Rightarrow}
  {M\models\psi(a,b)}\hfill for some $b\in M$ by (2)
  
  \ceq{}
  {\Rightarrow}
  {M\models\varphi'(a,b)}
  
  \ceq{}
  {\Rightarrow}
  {\varphi'(a,b)}\hfill by induction hypothesis

  \ceq{}
  {\Rightarrow}
  {\exists y\,\varphi'(a,y).}

  As $\varphi'>\varphi$ is arbitrary, $\exists y\,\varphi(a,y)$ follows from Proposition~\ref{prop_approx}.

  Induction for the connectives $\vee$, $\wedge$, $\forall\raisebox{1.1ex}{\scaleto{\sf H}{.8ex}\kern-.2ex}$, $\exists\raisebox{1.1ex}{\scaleto{\sf S}{.8ex}\kern-.2ex}$, and $\forall\raisebox{1.1ex}{\scaleto{\sf S}{.8ex}\kern-.2ex}$ is straightforward.

\hfil***

  (1$'\!\Rightarrow$2) \ 
  Let $\psi'>\psi''>\psi$.
  Reasoning as in the proof of (1$\Rightarrow$2) we obtain that ${\rm Ccl}(M)\models\psi''(a)$ for some $a\in{\rm Ccl}(M)$.
  By Corollary~\ref{corol_pertubation}, $a\sim_\varepsilon x\rightarrow\psi'(x)$ for some $\varepsilon$.
  As $a\sim'x$ is finitely satisfied in $M$, it follows that $\psi'(c)$ for some $c\in M$.

  (2$\Leftrightarrow$3) \ 
  The proof above applies verbatim when ${\EuScript H}$ is a c-dense set of continuous formulas.
  
  (2$\Rightarrow$1$'$) \ 
  Assume (2).
  We claim that ${\rm Ccl}(M)$ is a substructure of ${\EuScript U}$.  
  Let $a\in M^{n}$ and let $f$ be a function symbol of sort ${\sf H}^{n}\to{\sf H}$.
  We prove that $fa\in M$.
  We show that $fa\sim' x$ is finitely satisfied in $M$.
  Consider the formula $fa\sim_\varepsilon  x$ where $\varepsilon$ is the pair $\tau,D$.
  By Lemma~\ref{lem_interpolation}, there is a formula in $\tilde\varphi(x)\in{\EuScript F}^{\rm p}(M)$ such that
  
  \ceq{\hfill fa\sim_\tau x}{\rightarrow}{\neg\tilde\varphi(x)}\parbox{6ex}{\hfil$\rightarrow$}$fa\sim_\varepsilon  x$

  By Fact~\ref{fact_Fdense}.ii there is a consistent formula  $\neg\psi(x)$, for some $\psi(x)\in{\EuScript H}(M)$, that implies $fa\sim_\varepsilon  x$.
  Then, by (3), $fa\sim_\varepsilon  x$ is satisfied in $M$.
  This proves our claim.

  Now, we claim that (2) holds also for every $\psi(x)\in{\EuScript H}\big({\rm Ccl}(M)\big)$.
  Let $\psi(x,z)\in{\EuScript H}(M)$ and $\psi'>\psi$ be given.
  Let $b\in{\rm Ccl}(M)^{|z|}$.
  Suppose that $\exists x\,\psi(x,b)$ and let $\varepsilon$ be such that $z\sim_\varepsilon b\rightarrow\exists x\,\psi''(x,z)$ where $\psi'>\psi''>\psi$.
  By Corollary~\ref{corol_pertubation}, we can also assume that $z\sim_\varepsilon b\wedge\psi''(x,z)\rightarrow\psi'(x,b)$. 
  Let $b'\in M^{|z|}$ be such that $b'\sim_\varepsilon b$.
  By (2) there is an $a\in M^{|x|}$ such that $\psi''(a,b')$.
  Then $\psi'(a,b)$ follows.
  This proves the second claim.

  By the two claims above, the inductive argument in the proof in (2$\Rightarrow$1) applies to prove that ${\rm Ccl}(M)$ is a c-model.

  \hfil***

  (1$''\!\Rightarrow$2) \ 
  By the same argument as in (1$\Rightarrow$2).

  (2$\Rightarrow$1$''$) \ 
  As $M$ is a substructure by assumption, the inductive argument in the proof in (2$\Rightarrow$1) applies.
\end{proof}

    
   

\begin{remark}\label{rem_Tarski_Vaught}
  Theorem~\ref{thm_Tarski_Vaught} shows in particular that for every substructure $M$ the following are equivalent
  \begin{itemize}
    \item[1.] $M$ is a c-model;
    \item[2.] ${\rm Ccl}(M)$ is a c-model.
  \end{itemize}
\end{remark}

Classically, the first application of the Tarski-Vaught test is in the proof of the downward L\"owen\-heim-Skolem Theorem.
Note that by the classical downward L\"owenheim-Skolem Theorem every $A\subseteq{\EuScript U}$ is contained in a standard structure of cardinality $|{\EuScript L}(A)|$.
In this form the L\"owenheim-Skolem Theorem is not very informative.
In fact, the cardinality of ${\EuScript L}$ is eccessively large because of the aboundance of symbols in ${\EuScript L}_{\sf S}$.

We say that ${\EuScript F}^{\rm p/c}$ is \emph{separable\/} if there is a countable p/c-dense set ${\EuScript H}$ of positive/continuous formulas.

\begin{proposition}
  Let ${\EuScript F}^{\rm p/c}$ be separable.
  Let $A$ be a countable set.
  Then there is a countable p/c-model $M$ containing $A$.
\end{proposition}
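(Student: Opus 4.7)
Invoking separability, I would fix once and for all a countable p/c-dense set ${\EuScript H}\subseteq{\EuScript F}^{\rm p/c}$. The plan is to build $M$ as $\bigcup_{n<\omega} M_n$, starting from $M_0=A$, where $M_{n+1}$ is obtained from $M_n$ by (i) adjoining, for each $\psi(x)\in{\EuScript H}(M_n)$ with ${\EuScript U}\models\exists x\,\psi(x)$, one witness $b_\psi\in{\EuScript U}$ satisfying $\psi$, and (ii) closing $M_n$ under all function symbols of sort ${\sf H}^k\to{\sf H}$ (including constants) so that the limit is a substructure. Under the natural reading of separability---namely that the relevant home-sort language is essentially countable---each $M_{n+1}$ remains countable, because ${\EuScript H}(M_n)$ enumerates only countably many formulas. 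Hence $M$ is countable and, by construction, a substructure of ${\EuScript U}$ containing $A$.

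To confirm $M$ is a p/c-model I would apply Theorem~\ref{thm_Tarski_Vaught}, the Tarski-Vaught test, with ${\EuScript H}$ as the dense set. The substantive check is clause~(2): for every $\psi(x)\in{\EuScript H}(M)$ with ${\EuScript U}\models\exists x\,\psi(x)$ and every weakening $\psi'>\psi$, some element of $M$ must satisfy $\psi'$. Writing $\psi(x)=\psi_0(x,a)$ with $\psi_0\in{\EuScript H}$ parameter-free and $a\in M$, and observing that the weakening relation alters only the compact subsets appearing in atomic subformulas (not the parameters), we have $\psi'(x)=\psi'_0(x,a)$ for a parameter-free $\psi'_0>\psi_0$. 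Density of ${\EuScript H}$ then furnishes a $\chi\in{\EuScript H}$ with $\psi_0\to\chi\to\psi'_0$. Picking $n$ so that $a\in M_n$, the formula $\chi(x,a)\in{\EuScript H}(M_n)$ is existentially satisfiable in ${\EuScript U}$ (since $\psi_0\to\chi$), so step~(i) of the construction has already placed some $b\in M_{n+1}\subseteq M$ with $\chi(b,a)$, whence $\psi'(b)$ follows from $\chi\to\psi'_0$.

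With clause~(2) verified, Theorem~\ref{thm_Tarski_Vaught} immediately delivers that $M$ is a p-model in the positive case. In the continuous case, step~(ii) guarantees that $M$ is a substructure of ${\EuScript U}$, so clause~(1$''$) of the same theorem applies and $M$ is a c-model. The one genuinely delicate point is that an arbitrary weakening $\psi'>\psi$ ranges over a possibly uncountable family, whereas at each stage of the construction we only add countably many witnesses; the way around this is exactly density, which lets every $\psi'$ be factored through some $\chi\in{\EuScript H}$ that the construction has already dealt with. The rest is routine countable bookkeeping.
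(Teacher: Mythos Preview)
Your approach is correct and matches the paper's: fix a countable p/c-dense ${\EuScript H}$, run the classical L\"owenheim--Skolem construction to obtain a countable $M\supseteq A$ closed under witnesses (and, in the continuous case, under the home-sort function symbols so that $M$ is a substructure), then invoke Theorem~\ref{thm_Tarski_Vaught}. The paper phrases the witness step via clause~(3) (witnesses for consistent $\neg\psi$) rather than clause~(2), but the two are interchangeable.

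One remark: the ``genuinely delicate point'' you flag is a phantom. Having added at stage $n{+}1$ a witness $b$ with $\psi_0(b,a)$, you already have $\psi'_0(b,a)$ for \emph{every} weakening $\psi'_0>\psi_0$, since $\psi_0\to\psi'_0$ holds outright. A single witness for $\psi$ handles the entire (possibly uncountable) family of weakenings simultaneously, so the interpolation through $\chi\in{\EuScript H}$ is unnecessary. Your argument is not wrong, just longer than it needs to be; the density of ${\EuScript H}$ is doing real work only in keeping the enumeration of ${\EuScript H}(M_n)$ countable, not in the verification of clause~(2).
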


\begin{proof}
  Let ${\EuScript H}$ be a countable p/c-dense set of positive/continuous formulas.
  As in the classical proof of the L\"owenheim-Skolem Theorem, we construct a countable $M\subseteq{\EuScript U}$ that contains a witness of every consistent formula $\neg\psi(x)$ for $\psi(x)\in{\EuScript H}(M)$.
  In continuous case we also ensure that $M$ is a substructure.
  Then the proposition follows from Theorem~\ref{thm_Tarski_Vaught}.
\end{proof}

The following proposition is proved in~\cite{clcl} for a smaller language ${\EuScript L}$ but the proof can be easily adapted.

\begin{proposition}
  Assume $S$ is a second countable (i.e.\@ the topology has a countable base).
  If ${\EuScript L}\smallsetminus{\EuScript L}_{\sf S}$ has at most countably many symbols then ${\EuScript F}^{\rm p}$ is separable.
\end{proposition}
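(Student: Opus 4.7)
The plan is to exhibit a countable p-dense subset ${\EuScript H}_0$ of ${\EuScript F}^{\rm p}$ by building every formula from a countable stock of atomic components. Since $S$ is compact, Hausdorff, and second countable it is metrizable; fix a compatible metric $d$ and a countable dense subset $S_0\subseteq S$. The closed balls $\overline B(s,q)$ with $s\in S_0$ and $q$ positive rational form a countable base of closed subsets of $S$, and their finite intersections and finite products give a countable family ${\EuScript C}_0$ of compact subsets of the $S^n$'s which is a prebase of closed sets. By Example~\ref{ex_prebase} it suffices, at the atomic level, to consider formulas of the form $\tau(x\,;\eta)\in C$ with $C\in{\EuScript C}_0$.

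Next I cut the set of terms down to countable size. Only countably many function symbols lie in ${\EuScript L}\smallsetminus{\EuScript L}_{\sf S}$ by hypothesis, and those of sort ${\sf H}^n\times{\sf S}^m\to{\sf S}$ are equicontinuous in their ${\sf S}$-arguments. The function symbols in ${\EuScript L}_{\sf S}$, i.e.\@ the continuous maps $S^n\to S$, live in the space $C(S^n,S)$, which is separable in the uniform metric since $S^n$ is a compact metric space; fix a countable uniformly dense ${\EuScript G}_0\subseteq\bigcup_n C(S^n,S)$. Let $T_0$ be the countable set of terms built from variables, constants in $S_0$, all symbols of ${\EuScript L}\smallsetminus{\EuScript L}_{\sf S}$, and the symbols in ${\EuScript G}_0$. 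Let ${\EuScript H}_0$ be the closure under $\wedge,\vee,\forall,\exists$ (both sorts) of the countable set
\[\big\{r_\varphi(x):\varphi\in{\EuScript L}_{\sf H}\big\}\ \cup\ \big\{\tau(x\,;\eta)\in C:\tau\in T_0,\ C\in{\EuScript C}_0\big\}.\]
Then ${\EuScript H}_0$ is countable, and by the fact following Definition~\ref{def_dense} its density reduces to the atomic case.

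The remaining task is: given $\tau(x\,;\eta)\in C$ with $\tau\in C'$ for $C'>C$, produce $\tilde\tau\in\tilde C$ in ${\EuScript H}_0$ with $\tau\in C\rightarrow\tilde\tau\in\tilde C\rightarrow\tau\in C'$. First, using normality of $S^n$ together with the prebase property, interpolate $\tilde C\in{\EuScript C}_0$ with $C\subseteq{\rm int}(\tilde C)\subseteq\tilde C\subseteq C'$. Second, produce $\tilde\tau\in T_0$ whose interpretation is uniformly $\delta$-close to that of $\tau$ for a $\delta$ small enough that the asymmetric sandwich holds. This $\tilde\tau$ is obtained by bottom-up replacement inside $\tau$: constants from $S$ are replaced by close elements of $S_0$, function symbols from ${\EuScript L}_{\sf S}$ by uniformly close elements of ${\EuScript G}_0$, and the symbols from ${\EuScript L}\smallsetminus{\EuScript L}_{\sf S}$ are retained unchanged. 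The errors compose controllably because the retained symbols are equicontinuous in their ${\sf S}$-arguments.

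The main obstacle is the last step: propagating uniform approximations through compound terms. Each outer function symbol contributes its own modulus of equicontinuity, so the internal tolerances must be chosen in the right order — working from the outermost function inward to determine how small each inner perturbation must be — to obtain a cumulative bound small enough to separate $C$ from $S^n\smallsetminus\tilde C$ and $\tilde C$ from $S^n\smallsetminus C'$ simultaneously. Once this bookkeeping is in place, the formula $\tilde\tau\in\tilde C$ lies in ${\EuScript H}_0$ and is the required interpolant, establishing that ${\EuScript H}_0$ is p-dense modulo $T$ and hence that ${\EuScript F}^{\rm p}$ is separable.
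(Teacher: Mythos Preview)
The paper itself does not supply a proof: it cites \cite{clcl} for a smaller language and says the argument adapts, so there is no in-text proof to compare against. Your approach is the natural one and is essentially correct --- second countability plus compactness gives metrizability and a countable interpolating family of compact sets, separability of $C(S^n,S)$ (compact metric domain) gives a countable stock of ${\EuScript L}_{\sf S}$-function symbols, and equicontinuity of the mixed-sort symbols lets you push uniform approximations through compound terms. The reduction to atomic formulas via the Fact after Definition~\ref{def_dense} is the right move, and including the $r_\varphi$ explicitly bridges the gap between c-density (which is what Example~\ref{ex_prebase} literally states) and p-density.

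Two cosmetic points. Closed balls with dense centres and rational radii are not a ``base of closed subsets'' in the usual sense; what you actually use, and what suffices, is that finite unions of such balls (and of their products, for $S^n$) interpolate between any compact $C$ and a compact neighbourhood $C'>C$. And the bookkeeping in your last paragraph tacitly needs the moduli of equicontinuity for the mixed-sort symbols to be uniform across all standard models of $T$; this holds because those moduli are recorded by positive sentences and hence lie in $T$, but it is worth making explicit.
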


\section{Continuous omitting types}

\def\ceq#1#2#3{\parbox[t]{25ex}{$\displaystyle #1$}\parbox{6ex}{\hfil $#2$}{$\displaystyle #3$}}

In this section we present a version of the omitting types theorem.
We are interested in obtaining Cauchy complete models omitting a given type.
We only consider the continuous case; the positive case is only mentioned incidentally.

A type $p(x)$ is isolated by $\varphi(x)$, a consistent formula, if $\varphi(x)\rightarrow p(x)$.
If $p(x)$ is isolated by $\neg\varphi(x)$ for some $\varphi(x)\in{\EuScript F}^{\rm p/c}(A)$ we say that it is \emph{p/c-isolated\/} by $A$.
We omit the reference to $A$ when this is clear from the context (e.g., when $p(x)$ is presented as a type over $A$).

For ease of language in this and the next section we say that $p(x)$ is realized in $M$ if $p(a)$ holds (in ${\EuScript U}$) for some $a\in M^{|x|}$.
By Theorem~\ref{thm_Tarski_Vaught}, $p(x)$ is p/c-isolated by $A$, then $p(x)$ is realized in every p/c-model containing $A$.

\begin{fact}\label{fact_isolation}
  Let ${\EuScript H}$ be a p/c-dense set of positive/continuous formulas.
  Then the following are equivalent
  \begin{itemize}
  \item[1.] $p(x)$ is p/c-isolated by $A$;
  \item[2.] $p(x)$ is isolated by $\neg\varphi(x)$ for some $\varphi(x)\in{\EuScript H}(A)$;
  \item[3.] $p(x)$ is isolated by some $\varphi'(x)$ such that  $\varphi'>\varphi$ for some consistent $\varphi(x)\in{\EuScript H}(A)$.
  \end{itemize}
\end{fact}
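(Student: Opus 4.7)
The plan is to verify the three equivalences by a short cycle of implications, drawing on Proposition~\ref{prop_Fapprox} to pass between the full positive/continuous class and the dense subset ${\EuScript H}$, and on Lemma~\ref{lem_interpolation} to turn a weakening witness into a strong-negation witness. The implication $(2)\Rightarrow(1)$ is immediate since ${\EuScript H}(A)\subseteq{\EuScript F}^{\rm p/c}(A)$, so I would concentrate on the three remaining directions.

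For $(1)\Rightarrow(2)$, I start with $\neg\varphi$ isolating $p$ for some $\varphi\in{\EuScript F}^{\rm p/c}(A)$. By consistency, pick $a\models\neg\varphi$. Applying Proposition~\ref{prop_Fapprox}(ii) at $a$, I obtain $\psi(x)\in{\EuScript H}(A)$ with $\neg\psi(a)$ and $\neg\psi\to\neg\varphi$. Then $\neg\psi$ is consistent (witnessed by $a$) and $\neg\psi\to\neg\varphi\to p$, so $\neg\psi$ isolates $p$, giving (2).

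For $(1)\Rightarrow(3)$, I run the same argument using Proposition~\ref{prop_Fapprox}(i) instead. This yields $\psi(x)\in{\EuScript H}(A)$ with $\psi(a)$ and some $\psi'>\psi$ such that $\psi'\to\neg\varphi$. Taking the roles $\varphi:=\psi$ and $\varphi':=\psi'$, the formula $\psi$ is consistent (witnessed by $a$) and $\psi'\to\neg\varphi\to p$ shows that $\psi'$ isolates $p$, as required in (3).

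For $(3)\Rightarrow(1)$, suppose $\varphi'>\varphi$ isolates $p$ with $\varphi\in{\EuScript H}(A)\subseteq{\EuScript F}^{\rm p/c}(A)$ consistent. By Lemma~\ref{lem_interpolation}(1) there is $\tilde\varphi\perp\varphi$ with $\varphi\to\neg\tilde\varphi\to\varphi'$. The strong negation $\tilde\varphi$ lies in ${\EuScript F}^{\rm p/c}(A)$, and the consistency of $\varphi$ together with $\varphi\to\neg\tilde\varphi$ makes $\neg\tilde\varphi$ consistent; since $\neg\tilde\varphi\to\varphi'\to p$, the formula $\neg\tilde\varphi$ isolates $p$, yielding (1).

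I do not expect any step to be genuinely obstructive; the argument is essentially bookkeeping on top of the approximation machinery already in place. The only two points requiring mild care are tracking consistency of the various intermediate formulas, and invoking the earlier observation that strong negation preserves membership in both ${\EuScript F}^{\rm p}$ and ${\EuScript F}^{\rm c}$ (needed for $(3)\Rightarrow(1)$).
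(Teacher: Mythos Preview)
Your proof is correct and follows essentially the same route as the paper's own argument: the paper proves (1$\Rightarrow$2) and (1$\Rightarrow$3) by direct appeal to Proposition~\ref{prop_Fapprox}(ii) and (i) respectively, and proves (3$\Rightarrow$1) via Lemma~\ref{lem_interpolation} exactly as you do. Your version is simply more explicit about picking a witness $a$ to track consistency and about noting that strong negation preserves membership in ${\EuScript F}^{\rm p/c}$, both of which the paper leaves implicit.
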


\begin{proof}
  (1$\Rightarrow$2) By Proposition~\ref{prop_Fapprox}.ii.

  (1$\Rightarrow$3) By Proposition~\ref{prop_Fapprox}.i.

  (3$\Rightarrow$1) Let $\varphi'>\varphi$ be as in (3). 
  Let $\tilde\varphi\bot\varphi$ be such that $\varphi(x)\rightarrow\neg\tilde\varphi(x)\rightarrow\varphi'(x)$. 
  Then $\neg\tilde\varphi(x)$ isolates $p(x)$.
\end{proof}

The following fact is stated without proof for the sake of comparison.
It is a form of positive omitting type theorem not very distant from the classical one.
Its continuous analogue fails: consider the type $a\sim x$ which is realized in every model containing $a$ but it in general not c-isolated.

\begin{fact}
  Let ${\EuScript F}^{\rm p}$ be separable.
  Let $A\subseteq{\EuScript U}$ be countable.
  Let $p(x)\subseteq{\EuScript F}^{\rm p}(A)$.
  Then the following are equivalent:
  \begin{itemize}
    \item[1.] $p(x)$ is p-isolated;
    \item[2.] $p(x)$ is realized in every p-model containing $A$.
  \end{itemize}
\end{fact}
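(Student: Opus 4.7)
The plan is to derive (1)$\Rightarrow$(2) directly from the Tarski-Vaught test and to prove (2)$\Rightarrow$(1) by contrapositive, constructing a countable p-model $M\supseteq A$ that omits $p(x)$ through a Henkin-style iteration.

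For (1)$\Rightarrow$(2), suppose $p(x)$ is p-isolated by $\neg\varphi(x)$ for some $\varphi(x)\in{\EuScript F}^{\rm p}(A)$ with $\neg\varphi(x)$ consistent. Let $M$ be any p-model containing $A$. Condition~(3) of Theorem~\ref{thm_Tarski_Vaught}, applied with ${\EuScript H}={\EuScript F}^{\rm p}$, yields some $a\in M$ with $\neg\varphi(a)$; hence $p(a)$ and $p(x)$ is realized in $M$.

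For (2)$\Rightarrow$(1), the core is the following lemma (a positive analogue of the classical omitting-types step): if $p(x)$ is not p-isolated by a countable $B\supseteq A$ and $\neg\psi(z)$ is consistent with $\psi\in{\EuScript H}(B)$ (for ${\EuScript H}$ a fixed countable p-dense set), then some realization $a$ of $\neg\psi(z)$ keeps $p(x)$ not p-isolated by $B\cup\{a\}$. The lemma is proved by enumerating a countable p-dense subset $\langle\xi_i(x,z):i<\omega\rangle$ of ${\EuScript F}^{\rm p}_{x,z}(B)$ and building a descending chain of consistent positive formulas $\gamma_i(z)$ with weakenings $\gamma_i'>\gamma_i$, starting from $\gamma_0'\to\neg\psi$. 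At step $i+1$, if $\neg\xi_i(x,z)\wedge\gamma_i(z)$ is inconsistent we put $\gamma_{i+1}=\gamma_i$; otherwise we choose $\varphi(x)\in p$ such that
$$\neg\tilde\gamma_i(z)\ \wedge\ \exists x\big[\neg\xi_i(x,z)\wedge\neg\varphi(x)\big]$$
is consistent, where $\tilde\gamma_i\perp\gamma_i$ interpolates $\gamma_i\to\neg\tilde\gamma_i\to\gamma_i'$ by Lemma~\ref{lem_interpolation}, and let $\gamma_{i+1}$ be a consistent positive formula with a weakening that implies it (using Proposition~\ref{prop_Fapprox}). Any realization $a$ of $\{\gamma_i:i<\omega\}$---consistent by compactness in ${\EuScript U}$---is a witness: if $B,a$ p-isolated $p$, Fact~\ref{fact_isolation} would give some $\neg\xi_i(x,a)\to p(x)$, contradicting $\gamma_{i+1}(a)$.

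Iterating the lemma along a book-keeping enumeration of pairs $(\psi,B_n)$ (countably many by separability of ${\EuScript F}^{\rm p}$), one builds a chain $A=B_0\subseteq B_1\subseteq\cdots$ and sets $M=\bigcup_n B_n$. Then $M$ is countable and fulfills condition~(3) of Theorem~\ref{thm_Tarski_Vaught} with ${\EuScript H}$, so $M$ is a p-model, and $p(x)$ remains non-p-isolated by $M$. Any realization $b\in M^{|x|}$ of $p$ would however p-isolate $p$ via the consistent positive formula $\neg r_{x\neq b}(x)$ available by Morleyzation of ${\EuScript L}_{\sf H}$, so $p(x)$ must be omitted in $M$. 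The main obstacle is the existence step inside the key lemma: the decisive observation is that if no suitable $\varphi\in p$ existed at stage $i$, then $\exists z\big[\neg\xi_i(x,z)\wedge\neg\tilde\gamma_i(z)\big]\to p(x)$ would hold, producing a consistent formula of the form $\neg$(positive) isolating $p$ over $B$---contradicting the inductive non-isolation hypothesis.
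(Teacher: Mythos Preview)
Your proposal is correct and follows exactly the approach the paper indicates: the paper states this Fact without proof, but the surrounding text gives both ingredients you use---the remark that p-isolation implies realization in every p-model via Theorem~\ref{thm_Tarski_Vaught}, and the observation that a realization $b\in M$ p-isolates $p(x)$ through $\neg(b\neq x)$ by Morleyzation---while your key lemma is the positive specialization of Lemma~\ref{lem_kuratowskiUlam_cont} (and coincides verbatim with a lemma the paper drafts in a commented-out section). The Henkin iteration and the closing contradiction argument are likewise the ones the paper uses in the continuous case, so nothing further is needed.
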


We need a weaker notion of isolation.
This will also allow to extend the omitting types theorem to Cauchy complete models.

We say that the type $p(x)\subseteq{\EuScript F}^{\rm c}({\EuScript U})$ is \emph{approximately c-isolated\/} by $A$ if for every $\varepsilon\in{\EuScript E}$ there is a formula $\varphi_\varepsilon(x)\in{\EuScript F}^{\rm c}(A)$ such that $\neg\varphi_\varepsilon(x)$ is consistent with $p(x)$ and isolates $\exists x'\sim_\varepsilon x\ p(x')$.
Notice the requirement of consistency with $p(x)$.
This is used in the proof of Lemma~\ref{lem_atomic_iso}.

Assume for the rest of this section that ${\EuScript F}^{\rm c}$ is separable.
Let $\langle\varepsilon_n:n\in\omega\rangle$ be such that for every $\varepsilon\in{\EuScript E}$ there is an $n$ such that $x\sim_{\varepsilon_n}y\ \rightarrow\ x\sim_\varepsilon y$.
We abbreviate $x\sim_{\varepsilon_n}y$ by \emph{$x\sim_ny$.}

\begin{fact}\label{fact_isolationpp}
  Let ${\EuScript F}^{\rm c}$ be separable.
  Let $p(x)\subseteq{\EuScript F}^{\rm c}(A)$ be maximally consistent type that is approximately c-isolated.
  Then this can be witnessed by formulas $\psi_n(x)\in{\EuScript F}^{\rm c}(A)$ such that $\psi_n(x)\rightarrow\psi_{n+1}(x)$.

  Finally, if $a_n\models\neg\psi_n(x)$, the type $p(x)\cup\big\{a_n\sim_nx\ :\ n\in\omega\big\}$ is consistent.
\end{fact}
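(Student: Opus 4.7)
The plan has two parts.

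For the monotone witnessing sequence, I would build it by cumulative disjunction. From the hypothesis of approximate c-isolation pick, for each $n$, a formula $\varphi_n(x)\in{\EuScript F}^{\rm c}(A)$ such that $\neg\varphi_n(x)$ is consistent with $p(x)$ and isolates $\exists x'\sim_n x\ p(x')$. Setting $\psi_n:=\varphi_0\vee\cdots\vee\varphi_n$ makes $\psi_n\to\psi_{n+1}$ immediate, and isolation at level $n$ survives because $\neg\psi_n=\bigwedge_{i\le n}\neg\varphi_i$ already implies $\neg\varphi_n$. For consistency of $\neg\psi_n$ with $p$, I would invoke Fact~\ref{fact_max_cons_F}: maximality gives $p\leftrightarrow\mbox{c-tp}(a/A)$ for some $a$, so all realizations of $p$ satisfy the same continuous formulas over $A$. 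Each $\neg\varphi_i$ is a negated continuous formula consistent with $p$, hence holds on every realization of $p$, and therefore so does the finite conjunction $\neg\psi_n$.

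For the consistency of $p(x)\cup\{a_n\sim_n x:n\in\omega\}$, I would apply the positive compactness theorem (Theorem~\ref{thm_compattezza}) and reduce to finite subtypes $p_0(x)\cup\{a_{n_i}\sim_{n_i} x:i\in J\}$ with $n_k=\max J$. Applying the isolating property of $\neg\psi_{n_k}$ to $a_{n_k}$ yields some $b\models p$ with $b\sim_{n_k}a_{n_k}$; since $\sim_{n_k}$ refines $\sim_{n_i}$ for $i<k$, this immediately gives $b\sim_{n_i}a_{n_k}$ for free.

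The main obstacle is then bridging from $b\sim_{n_i}a_{n_k}$ to the required $b\sim_{n_i}a_{n_i}$, since a priori nothing forces $a_{n_i}$ and $a_{n_k}$ to be close. I expect the construction of the $\psi_n$'s must carry additional information beyond mere monotonicity---for instance, by careful choice of the $\varphi_n$'s one arranges $\neg\psi_{n+1}(x)\wedge\neg\psi_n(y)\to x\sim_n y$, using Corollary~\ref{corol_pertubation} at each stage to absorb a suitable perturbation. Such a strengthening forces any sequence $(a_n)$ with $a_n\models\neg\psi_n$ to be Cauchy in the uniformity induced by $S$, after which a common $b\models p$ with $b\sim_n a_n$ for all $n$ is produced by combining saturation of ${\EuScript U}$, the isolating property at each level, and the Cauchy-completion apparatus of Section~\ref{Cauchy}.
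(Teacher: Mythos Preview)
Your first part—setting $\psi_n=\varphi_0\vee\dots\vee\varphi_n$ and using maximality of $p$ to see that each $\neg\varphi_i$ (hence $\neg\psi_n$) is implied by $p$ and so remains consistent with it—is exactly the paper's argument, essentially verbatim.

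For the second claim the paper writes only: ``The second claim follows immediately.'' There is no compactness reduction, no analysis of how the $a_n$ relate to one another, and no strengthening of the $\psi_n$. So everything you do from that point on goes beyond what the paper supplies.

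The obstacle you isolate is genuine and not an artifact of your approach. Since maximality gives $p(x)\to\neg\psi_n(x)$ for every $n$, \emph{any} realization of $p$ is a legitimate choice for every $a_n$. If $p$ happens to have two realizations $c_1\not\sim c_2$ (and approximate c-isolation over a small $A$ does not preclude this), then choosing the $a_n$ to alternate between $c_1$ and $c_2$ forces any realization of $\{a_n\sim_n x:n\in\omega\}$ to be $\sim$-equivalent to both, a contradiction. Thus the paper's ``immediately'' either glosses over a real difficulty or rests on an unstated convention about the $\varepsilon_n$ and the $\varphi_n$. Your proposed repair—building the $\psi_n$ so that $\neg\psi_{n+1}(x)\wedge\neg\psi_n(y)\to x\sim_n y$, which would force any admissible sequence $(a_n)$ to be Cauchy—is a reasonable line of attack, but you have only sketched it; note in particular that this is strictly stronger than approximate c-isolation as defined, so one would need to argue that such $\psi_n$ can actually be produced.
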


\begin{proof}
  Let $\neg\varphi_n(x)$ isolate $\exists x'\sim_n x\ p(x')$.
  Write $\psi_n(x)$ for $\varphi_0(x)\vee\dots\vee\varphi_n(x)$.
  Clearly $\neg\psi_n(x)\ \rightarrow\ x\sim_n\! y$, then we only need verify that $\neg\psi_n(x)$ is consistent with $p(x)$.
  But this is the case because by maximality $p(x)\rightarrow\neg\varphi_n(x)$.
  The second claim follows immediately.
\end{proof}

\begin{fact}\label{fact_sim_n}
  Let ${\EuScript F}^{\rm c}$ be separable.
  Let $p(x)\subseteq{\EuScript F}^{\rm c}(A)$ be maximally consistent and approximately c-isolated by $A$.
  Let $M$ be a c-model containing $A$.
  Then $p(x)$ is realized in ${\rm Ccl}(M)$.
\end{fact}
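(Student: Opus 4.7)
The plan is to produce a Cauchy sequence inside $M$ whose limit in ${\EuScript U}$ realizes $p(x)$; the limit will then automatically belong to ${\rm Ccl}(M)$ by definition.

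First, I will invoke Fact~\ref{fact_isolationpp} to obtain an increasing sequence of formulas $\psi_n(x)\in{\EuScript F}^{\rm c}(A)$ witnessing the approximate c-isolation of $p(x)$: each $\neg\psi_n(x)$ is consistent with $p(x)$ and implies $\exists x'\sim_n x\ p(x')$, and moreover for any choice of $a_n\models\neg\psi_n$ the type $p(x)\cup\{a_n\sim_n x:n\in\omega\}$ is consistent in ${\EuScript U}$.

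Next, since $M$ is a c-model containing $A$ and $\neg\psi_n(x)$ is consistent in ${\EuScript U}$ (being consistent with $p(x)$), the Tarski--Vaught test (Theorem~\ref{thm_Tarski_Vaught}, implication 1$''\!\Rightarrow$3, applied with ${\EuScript H}={\EuScript F}^{\rm c}$) supplies an element $a_n\in M$ with $\neg\psi_n(a_n)$. By the last clause of Fact~\ref{fact_isolationpp}, the type $p(x)\cup\{a_n\sim_n x:n\in\omega\}$ is consistent in ${\EuScript U}$; let $a$ be a realization of it. Then $a\models p(x)$ and $a\sim_n a_n$ for every $n\in\omega$.

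It remains to argue that $a\in{\rm Ccl}(M)$. Given any $\varepsilon\in{\EuScript E}$, by the choice of the enumeration $\langle\varepsilon_n\rangle$ pick $n$ with $x\sim_n y\rightarrow x\sim_\varepsilon y$; then $a_n\in M$ satisfies $a\sim_\varepsilon x$. Hence $a\sim' x$ is finitely satisfied in $M$, which is precisely the definition of $a\in{\rm Ccl}(M)$. The argument is essentially a Cauchy-sequence construction; the only delicate point is the simultaneous need for the consistency of each individual $\neg\psi_n(x)$ with $p(x)$ (to apply Tarski--Vaught) and of the whole family $\{a_n\sim_n x:n\in\omega\}$ with $p(x)$ (to obtain the limit), both of which are already packaged into Fact~\ref{fact_isolationpp}.
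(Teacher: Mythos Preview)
Your proof is correct and follows exactly the same route as the paper's: invoke Fact~\ref{fact_isolationpp}, pick witnesses $a_n\in M$ for $\neg\psi_n$, and observe that any realization of $p(x)\cup\{a_n\sim_n x:n\in\omega\}$ lies in ${\rm Ccl}(M)$. The paper's proof compresses this into two lines, leaving implicit both the appeal to the Tarski--Vaught test to place the $a_n$ in $M$ and the verification that the limit lands in ${\rm Ccl}(M)$; you have simply spelled these out.
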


\begin{proof}
  Let $a_n\in M$ be as in Fact~\ref{fact_isolationpp}.
  Then any solution of $p(x)\cup\big\{a_n\sim_nx\ :\ n\in\omega\big\}$ belongs to ${\rm Ccl}(M)$.
\end{proof}

In words, the proof above shows that if $p(x)$ is approximately c-isolated, then in every c-model contains a sequence of elements that converges, in the topology induced by $S$, to a realization of $p(x)$.

\begin{fact}\label{fact_wcisolatio_realization}
  Let ${\EuScript F}^{\rm c}$ be separable.
  Let $M$ be a c-model containing $A$.
  Then every type $p(x)\subseteq{\EuScript F}^{\rm c}(A)$ that is realized in ${\rm Ccl}(M)$ is approximately c-isolated by $M$.
\end{fact}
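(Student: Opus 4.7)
My plan is to produce, for each $\varepsilon\in{\EuScript E}$, a single witnessing formula $\varphi_\varepsilon(x)\in{\EuScript F}^{\rm c}(M)$ obtained by applying the interpolation Lemma~\ref{lem_interpolation}.i to an approximate-equality pair. Fix $a\in{\rm Ccl}(M)^{|x|}$ realizing $p(x)$, and fix $\varepsilon\in{\EuScript E}$. The element $a$ will play a double role: it supplies the consistency witness for $\neg\varphi_\varepsilon$ with $p$, and via finite satisfaction of $a\sim'x$ in $M$ it also produces the parameter $b\in M^{|x|}$ that appears in $\varphi_\varepsilon$.

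First, using that $\{\sim_\eta\}_{\eta\in{\EuScript E}}$ is a uniformity base, I would pick $\eta\in{\EuScript E}$ with $(\sim_\eta)\circ(\sim_\eta)\subseteq(\sim_\varepsilon)$. Then, using density of the relation $>$ on compact sets, I would pick $\eta''\in{\EuScript E}$ so that $\psi'(x)=(x\sim_\eta b)$ is a (strict) weakening of $\psi(x)=(x\sim_{\eta''}b)$ in the sense of Section~\ref{ultrapws} (i.e.\@ the compact $D$ associated to $\eta$ is a neighborhood of the one associated to $\eta''$). Since $a\in{\rm Ccl}(M)$ the type $a\sim'x$ is finitely satisfied in $M$, so I can choose $b\in M^{|x|}$ with $b\sim_{\eta''}a$.

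Both $\psi$ and $\psi'$ lie in ${\EuScript F}^{\rm c}(M)$. Applying Lemma~\ref{lem_interpolation}.i to $\psi'>\psi$ yields $\tilde\psi\perp\psi$ with $\psi\to\neg\tilde\psi\to\psi'$, and I set $\varphi_\varepsilon:=\tilde\psi$. To verify the two clauses of approximate c-isolation: on the one hand $\psi(a)$ holds because $b\sim_{\eta''}a$, so $\neg\varphi_\varepsilon(a)$ holds, and together with $p(a)$ this shows $\neg\varphi_\varepsilon(x)$ is consistent with $p(x)$. On the other hand $\neg\varphi_\varepsilon(x)\to\psi'(x)=(x\sim_\eta b)$, which combined with $b\sim_\eta a$ yields $x\sim_\varepsilon a$; taking $x'=a$ gives $\neg\varphi_\varepsilon(x)\to\exists x'\sim_\varepsilon x\ p(x')$, as required.

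I do not anticipate a serious obstacle. The main delicacy is to track that $\psi$ is an \emph{approximation} of $\psi'$ in the precise sense of Section~\ref{ultrapws}—i.e.\@ at the level of the underlying compact sets, not merely in the uniformity ordering—so that Lemma~\ref{lem_interpolation} produces the strong negation $\tilde\psi$; and to make $\eta''$ fine enough that the pair $(\eta'',\eta)$ both witnesses a strict weakening and is usable in the finite-satisfaction choice of $b$. Note that separability of ${\EuScript F}^{\rm c}$, though standing in force throughout this section, plays no active role in this particular step.
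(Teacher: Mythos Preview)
Your argument is correct and follows essentially the same route as the paper's proof: pick a realization of $p$ in ${\rm Ccl}(M)$, halve~$\varepsilon$ via the uniformity, use finite satisfaction of $\sim'$ in $M$ to obtain a parameter in $M$, and then interpolate to get the isolating negated continuous formula. The only cosmetic differences are that your $a,b$ play the roles of the paper's $b,a$, and that you invoke Lemma~\ref{lem_interpolation} directly where the paper packages the same step into Fact~\ref{fact_isolation}; your version is in fact slightly more explicit about verifying that $\neg\varphi_\varepsilon$ is consistent with $p(x)$, which the paper leaves to the reader.
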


\begin{proof}
  Let $b\in{\rm Ccl}(M)^{|x|}$ realize $p(x)$.
  Let $\varepsilon\in{\EuScript E}$ be given.
  By Fact~\ref{fact_isolation} it suffices to find a formula $\varphi(x)$ and some $\varphi'>\varphi$ such that $\varphi'(x)\rightarrow\exists x'\sim_\varepsilon x\ p(x')$.

  Let $\eta'\in{\EuScript E}$ be such that $x\sim_{\eta'} y\sim_{\eta'} z\ \rightarrow\ x\sim_\varepsilon z$.
  Pick some $a\in M^{|x|}$ that satisfies $x\sim_{\eta'} b$.
  Then $x\sim_{\eta'}a\ \rightarrow\exists\ x'\sim_\varepsilon x\ p(x')$.
  Finally, note that $(\sim_{\eta'})>(\sim_\eta)$ for some $\eta\in{\EuScript E}$.
\end{proof}

\begin{lemma}\label{lem_kuratowskiUlam_cont}
  Let ${\EuScript F}^{\rm c}(A)$ be separable.
  Assume that $p(x)\subseteq{\EuScript F}^{\rm c}(A)$ is not approximately c-isolated.
  Then every consistent formula $\neg\psi(z)$, with $\psi(z)\in{\EuScript F}^{\rm c}(A)$, has a solution $a$ such that $A,a$ does not approximately c-isolate $p(x)$.
\end{lemma}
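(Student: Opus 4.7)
The plan is to carry out a diagonal construction in the spirit of the classical omitting types theorem, adapted to the continuous setting: since $p(x)$ is not approximately c-isolated over $A$, fix $\varepsilon_{n_0}\in{\EuScript E}$ witnessing this failure, and arrange $a$ so that $\varepsilon_{n_0}$ still witnesses non-isolation over $A,a$. By separability let $\langle\xi_i(x,z):i<\omega\rangle$ enumerate a countable c-dense subset of ${\EuScript F}^{\rm c}_{x,z}(A)$; an approximate-c-isolation analogue of Fact~\ref{fact_isolation} reduces the task to blocking would-be witnesses of the form $\neg\xi_i(x,a)$.

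Inductively build consistent continuous formulas $\gamma_i(z)\in{\EuScript F}^{\rm c}(A)$ with weakenings $\gamma'_i>\gamma_i$, arranged so that $\{\gamma'_i\}$ is decreasing and $\gamma'_0\rightarrow\neg\psi$ (use Proposition~\ref{prop_Fapprox} on the consistent $\neg\psi$). At stage $i{+}1$, pick $\tilde\gamma_i\perp\gamma_i$ with $\gamma_i\rightarrow\neg\tilde\gamma_i\rightarrow\gamma'_i$ by Lemma~\ref{lem_interpolation}. If $\neg\xi_i(x,z)\wedge\gamma_i(z)\wedge p(x)$ is inconsistent, let $\gamma_{i+1}=\gamma_i$. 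Otherwise, select $\varphi(x)\in p$, strong negations $\tilde\xi_i\perp\xi_i$ and $\tilde\varphi\perp\varphi$, and a weakening $\tilde\varphi'\perp\varphi$ of $\tilde\varphi$ with enough slack that Corollary~\ref{corol_pertubation} yields $x\sim_{\varepsilon_{n_0}}\!x'\wedge\tilde\varphi(x)\rightarrow\tilde\varphi'(x')$, such that
\[
(\#)\qquad \neg\tilde\gamma_i(z)\,\wedge\,\exists x\bigl[\tilde\xi_i(x,z)\wedge\tilde\varphi(x)\bigr]
\]
is consistent. Writing $\alpha(z)=\exists x[\tilde\xi_i\wedge\tilde\varphi]$ (positive), Proposition~\ref{prop_Fapprox}(i) applied to the $\neg\tilde\gamma_i$ conjunct at a realization $c$ of $(\#)$ yields a continuous $\sigma_0$ with $\sigma_0(c)$ and some $\sigma'_0>\sigma_0$ satisfying $\sigma'_0\rightarrow\neg\tilde\gamma_i$; set $\gamma_{i+1}=\sigma_0\wedge\alpha$, $\gamma'_{i+1}=\sigma'_0\wedge\alpha$, both continuous and consistent, with $\gamma'_{i+1}\rightarrow(\#)\rightarrow\gamma'_i$.

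Take $a$ realizing $\{\gamma'_i:i<\omega\}$ by saturation of ${\EuScript U}$; so $a\models\neg\psi$. For each $i$: in the inconsistency case, $\neg\xi_i(x,a)\wedge p(x)$ is inconsistent. In the consistency case, $a\models(\#)$ produces $b$ with $\tilde\xi_i(b,a)$ (hence $\neg\xi_i(b,a)$) and $\tilde\varphi(b)$; combining with the perturbation implication and $\tilde\varphi'\rightarrow\neg\varphi$ gives $\forall b'\sim_{\varepsilon_{n_0}}\!b\;\neg\varphi(b')$, so no $b'\sim_{\varepsilon_{n_0}}b$ realizes $p$, blocking isolation of $\exists x'\sim_{\varepsilon_{n_0}}\!x\;p(x')$ by $\neg\xi_i(x,a)$. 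Thus $A,a$ does not approximately c-isolate $p$ at $\varepsilon_{n_0}$.

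The crux is showing that the consistency case always admits a suitable $\varphi,\tilde\varphi,\tilde\xi_i$. Suppose not: $(\#)$ is inconsistent for every choice. Then \textit{a fortiori} the variant $(\#_0):\neg\tilde\gamma_i(z)\wedge\exists x[\neg\xi_i(x,z)\wedge\forall x'(x\sim_{\varepsilon_{n_0}}\!x'\rightarrow\tilde\varphi(x'))]$ is inconsistent for every $\varphi\in p$, $\tilde\varphi\perp\varphi$, since any witness of $(\#_0)$ refines to a witness of $(\#)$ by choosing $\tilde\xi_i$ tight around the witness. Inconsistency of $(\#_0)$ gives $\neg\tilde\gamma_i(z)\wedge\neg\xi_i(x,z)\rightarrow\exists x'\sim_{\varepsilon_{n_0}}\!x\;\neg\tilde\varphi(x')$; as $\neg\tilde\varphi$ ranges over weakenings of $\varphi$ (Lemma~\ref{lem_interpolation}), Proposition~\ref{prop_approx} in the saturated ${\EuScript U}$ yields $\neg\tilde\gamma_i(z)\wedge\neg\xi_i(x,z)\rightarrow\exists x'\sim_{\varepsilon_{n_0}}\!x\;\varphi(x')$ for every $\varphi\in p$, and saturation combines these (using closure of $p$ under finite conjunction) into $\neg\tilde\gamma_i(z)\wedge\neg\xi_i(x,z)\rightarrow\exists x'\sim_{\varepsilon_{n_0}}\!x\;p(x')$. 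Quantifying $z$, $\neg\chi(x)\rightarrow\exists x'\sim_{\varepsilon_{n_0}}\!x\;p(x')$ where $\chi(x)=\forall z[\tilde\gamma_i(z)\vee\xi_i(x,z)]\in{\EuScript F}^{\rm c}(A)$, and $\neg\chi$ is consistent with $p$ by the case hypothesis (using $\gamma_i\rightarrow\neg\tilde\gamma_i$)—so $\neg\chi$ witnesses approximate c-isolation of $p$ at $\varepsilon_{n_0}$ over $A$, contradicting the choice of $\varepsilon_{n_0}$. The main technical obstacles are this last combination step and ensuring that $\tilde\varphi'$ can be chosen with slack large enough relative to $\varepsilon_{n_0}$ for Corollary~\ref{corol_pertubation}; both are handled by the normality of $S$ and equicontinuity built into ${\EuScript L}$-structures.
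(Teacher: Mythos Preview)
Your overall architecture mirrors the paper's: fix a witnessing $\varepsilon$, enumerate a c-dense family $\xi_i(x,z)$, and build a decreasing chain of continuous conditions on $z$ that successively block each $\xi_i$ from isolating $\exists x'\sim_\varepsilon x\;p(x')$. The divergence is in how you encode the blocking condition at stage $i{+}1$, and that is where a genuine gap appears.

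The paper writes the blocking condition as
\[
(\#)\qquad \neg\tilde\gamma(z)\ \wedge\ \exists x\,\big[\neg\xi_i(x,z)\wedge\neg\exists x'\sim_\varepsilon x\ \varphi(x')\big],
\]
observes that this is literally $\neg\Psi(z)$ for a single continuous $\Psi$, and then invokes Proposition~\ref{prop_Fapprox}(i) to obtain a continuous $\gamma_{i+1}$ with $\gamma'_{i+1}\rightarrow(\#)$. No perturbation estimate is needed: the formula $\neg\exists x'\sim_\varepsilon x\ \varphi(x')$ is simply left as a negated continuous formula inside $(\#)$.

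You instead try to replace $\neg\exists x'\sim_\varepsilon x\ \varphi(x')$ by a \emph{positive} surrogate $\tilde\varphi(x)$, and then recover the blocking via a perturbation implication $x\sim_{\varepsilon_{n_0}}x'\wedge\tilde\varphi(x)\rightarrow\tilde\varphi'(x')$ with $\tilde\varphi'\perp\varphi$. But Corollary~\ref{corol_pertubation} runs in the opposite direction: given a weakening $\tilde\varphi'>\tilde\varphi$ it \emph{produces} an entourage $\varepsilon$, not the other way around. With $\varepsilon_{n_0}$ fixed in advance there is in general no weakening that works. Concretely, if $\varepsilon_{n_0}=\langle\tau,D\rangle$ involves only a term $\tau_1$ while $\varphi$ (hence $\tilde\varphi$) is built from an unrelated term $\tau_2$, then $x\sim_{\varepsilon_{n_0}}x'$ imposes no constraint on $\tau_2(x')$ whatsoever, so no $\tilde\varphi'\perp\varphi$ can satisfy your implication. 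Normality of $S$ and equicontinuity (which concerns only the $S$-variables of mixed-sort function symbols) do not help here. This breaks both the forward construction (you cannot guarantee a valid $\tilde\varphi'$ exists) and the contradiction argument (if no admissible $\tilde\varphi'$ exists, the failure of your $(\#)$ is vacuous and you cannot pass ``a fortiori'' to $(\#_0)$).

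The fix is exactly what the paper does: do not positivize the blocking clause. Keep $\neg\exists x'\sim_{\varepsilon}x\ \varphi(x')$ inside $(\#)$, note that $(\#)=\neg\Psi(z)$ with $\Psi=\tilde\gamma\vee\forall x[\xi_i\vee\exists x'\sim_\varepsilon x\ \varphi(x')]\in{\EuScript F}^{\rm c}(A)$, and apply Proposition~\ref{prop_Fapprox}(i) once to get your $\gamma_{i+1},\gamma'_{i+1}$. With that change your argument collapses onto the paper's and the remaining steps (including the final contradiction via $\chi(x)=\forall z[\tilde\gamma_i\vee\xi_i]$) go through without the saturation/closure-under-conjunction detour.
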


\begin{proof}
  %
  Let $\varepsilon\in{\EuScript E}$ witness that $p(x)$ is not approximately c-isolated by $A$. 
  We construct a sequence of ${\EuScript F}^{\rm c}(A)$-formulas $\langle\gamma_i(z):i<\omega\rangle$ such that any realization $a$ of the type $\big\{\gamma_i(z):i<\omega\big\}$ is the required solution of $\neg\psi(z)$, witnessed by the same $\varepsilon$.
  
  Let $\langle\xi_i(x,z):i<\omega\rangle$ enumerate a countable c-dense subset of ${\EuScript F}^{\rm c}_{x,z}(A)$.
  Let $\gamma_0(z)$ be a consistent continuous formula such that $\gamma'_0(z)\rightarrow\neg\psi(z)$ for some $\gamma'_0>\gamma_0$.
  This exists by Proposition~\ref{prop_Fapprox}.
  Now we define $\gamma_{i+1}(z)$ and $\gamma'_{i+1}>\gamma_{i+1}$.
  Let $\gamma_i(z)$ and $\gamma'_i>\gamma_i$ be given.
  Pick $\tilde{\gamma}\perp\gamma_i$ such that $\gamma_i(z)\rightarrow\neg\tilde{\gamma}(z)\rightarrow\gamma'_i(z)$.
  
  \begin{itemize}
  \item[1.] If $\neg\xi_i(x,z)\wedge\gamma_i(z)$ is inconsistent with $p(x)$, let $\gamma_{i+1}(z)=\gamma_i(z)$ and  $\gamma'_{i+1}(z)=\gamma'_i(z)$.
  \item[2.] Otherwise, pick $\varphi(x)\in p$ such that (\#) below is consistent (the existence of such formula is proved below)
  
  (\#)\hfil $\neg\tilde{\gamma}(z)\ \wedge\ \exists x\,\big[\neg\xi_i(x,z)\wedge\neg\exists x'\sim_\varepsilon x\ \varphi(x')\big].$\kern15ex
  
  Finally, let $\gamma_{i+1}(z)$ and $\gamma'_{i+1}>\gamma_{i+1}$ be consistent continuous formulas such that $\gamma'_{i+1}(z)$ implies (\#).
  Such formulas exist by Proposition~\ref{prop_Fapprox}.
  \end{itemize}
  
  Let $a\models\{\gamma_i(z):i<\omega\}$.
  We claim that that $A,a$ does not c-isolate $p(x)$, witnessed by $\varepsilon$.
  Otherwise $\neg\xi_i(x,a)\rightarrow\exists x'\sim_\varepsilon x\  p(x')$ for some $\neg\xi_i(x,a)$ consistent with $p(x)$.
  This contradicts $a\models\gamma_{i+1}(z)$.
  
  Therefore the proof is complete if we can show that it is always possible to find the formula $\varphi(x)$ required in (2).
  
  Suppose for a contradiction that $\neg\xi_i(x,z)\wedge\gamma_i(z)$ is consistent with $p(x)$ while (\#) is inconsistent for all formulas $\varphi(x)\in p$.
  This immediately implies that 
  
  \ceq{\hfill\exists z\;\big[\neg\xi_i(x,z)\wedge\neg\tilde{\gamma}(z)\big]}{\rightarrow}{\exists x'\sim_\varepsilon x\ p(x').}
  
  By Fact~\ref{fact_isolation}, this yields the desired contradiction.
\end{proof}

\begin{theorem}[Continuous Omitting Types]\label{thm_cOTT}
  Let ${\EuScript F}^{\rm c}$ be separable.
  Let $A$ be countable.
  Assume also that $p(x)\subseteq{\EuScript F}^{\rm c}(A)$ is not approximately c-isolated.
  Then there is a c-model $M$ containing $A$ such that ${\rm Ccl}(M)$ omits $p(x)$. 
\end{theorem}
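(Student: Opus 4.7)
The plan is to adapt the classical omitting types construction to the continuous setting, using Lemma~\ref{lem_kuratowskiUlam_cont} at each stage to preserve non-isolation of $p(x)$, and then to verify the Tarski--Vaught criterion (Theorem~\ref{thm_Tarski_Vaught}) at the end. The strength of the theorem lies not in omitting $p(x)$ in $M$ itself (which is easier) but in omitting it in the Cauchy completion ${\rm Ccl}(M)$; Fact~\ref{fact_wcisolatio_realization} is what makes this transition possible, since approximate c-isolation is precisely the right notion of isolation to control realizability in ${\rm Ccl}(M)$.

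I fix a countable c-dense set ${\EuScript H}$ of continuous formulas and construct an increasing chain $A\subseteq M_0\subseteq M_1\subseteq\cdots$ of countable substructures of ${\EuScript U}$, maintaining as an invariant that $p(x)$ is not approximately c-isolated by any $M_n$. Using a standard bookkeeping, I enumerate all pairs $(\psi(x,z),b)$ with $\psi\in{\EuScript H}$ and $b$ a tuple from $\bigcup_n M_n$ such that $\neg\psi(x,b)$ is consistent, arranging that each pair is treated at a stage at which $b$ has already appeared. At stage $n+1$, given the current pair $(\psi,b)$, I apply Lemma~\ref{lem_kuratowskiUlam_cont} with $M_n$ in place of $A$ (legitimate since $p(x)\subseteq{\EuScript F}^{\rm c}(A)\subseteq{\EuScript F}^{\rm c}(M_n)$ and, by the invariant, $p(x)$ is not approximately c-isolated by $M_n$) to obtain a witness $a$ of $\neg\psi(x,b)$ such that $M_n\cup\{a\}$ still does not approximately c-isolate $p(x)$. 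I then let $M_{n+1}$ be the substructure of ${\EuScript U}$ generated by $M_n\cup\{a\}$, which remains countable by separability of the language.

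A small but necessary point is that this substructure closure preserves the invariant: every element of $M_{n+1}$ is $\tau(c)$ for some ${\EuScript L}_{\sf H}$-term $\tau$ and $c\in M_n\cup\{a\}$, so any formula with parameters in $M_{n+1}$ is equivalent to one with parameters in $M_n\cup\{a\}$, and non-isolation transfers. Setting $M=\bigcup_n M_n$, the bookkeeping yields condition~(3) of Theorem~\ref{thm_Tarski_Vaught} with ${\EuScript H}$ as the dense set, so by the equivalence $1''\Leftrightarrow 3$ (applicable since $M$ is a substructure) the set $M$ is a c-model. Any formula with parameters in $M$ uses only finitely many of them, hence sits in ${\EuScript F}^{\rm c}(M_n)$ for some $n$; it follows that the invariant passes to the limit, and $p(x)$ is not approximately c-isolated by $M$. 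By the contrapositive of Fact~\ref{fact_wcisolatio_realization}, $p(x)$ is not realized in ${\rm Ccl}(M)$.

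The only real difficulty beyond bookkeeping is already absorbed into Lemma~\ref{lem_kuratowskiUlam_cont}: namely, that one can always choose witnesses while preserving non-isolation. Everything else is a faithful continuous adaptation of the classical Henkin--Orey argument, with approximate c-isolation playing the role of classical principal isolation and the passage from $M$ to ${\rm Ccl}(M)$ handled by Fact~\ref{fact_wcisolatio_realization}.
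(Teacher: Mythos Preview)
Your proof is correct and follows essentially the same approach as the paper's: iterate Lemma~\ref{lem_kuratowskiUlam_cont} within a Henkin-style construction to build a c-model $M\supseteq A$ that does not approximately c-isolate $p(x)$, verify this via condition~(3) of the Tarski--Vaught test (Theorem~\ref{thm_Tarski_Vaught}), and conclude that ${\rm Ccl}(M)$ omits $p(x)$ by the contrapositive of Fact~\ref{fact_wcisolatio_realization}. The paper compresses this into three lines, whereas you have carefully unpacked the bookkeeping, the substructure closure, and the passage of the non-isolation invariant to the limit; the one soft spot is the phrase ``remains countable by separability of the language'' for the substructure closure, which tacitly assumes countably many ${\EuScript L}_{\sf H}$-function symbols---but the paper's own Löwenheim--Skolem argument makes the same implicit assumption.
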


\begin{proof}
  By assumption $\exists y\sim_\varepsilon x\ p(y)$ is not approximately c-isolated for some $\varepsilon\in{\EuScript E}$. 
  Just as in the classical case, we apply Lemma~\ref{lem_kuratowskiUlam_cont} and the Tarski-Vaught test (Theorem~\ref{thm_Tarski_Vaught}) to obtain a countable c-model $M$ that does not isolate $\exists y\sim_\varepsilon x\ p(y)$.
  By Fact~\ref{fact_wcisolatio_realization}, ${\rm Ccl}(M)$ omits $p(x)$. 
\end{proof}

\section{Continuous countable categoricity}

\def\ceq#1#2#3{\parbox[t]{25ex}{$\displaystyle #1$}\parbox[t]{6ex}{$\displaystyle\hfil #2$}{$\displaystyle #3$}}

Let ${\EuScript F}^{\rm c}$ be separable.
We say that $T$ is \emph{c-$\omega$-categorical\/} if any two countable c-models are c-isomorphic as defined in Definition~\ref{def_ciso}.
It is easy to see that c-$\omega$-categorical theories are complete.
This allows us to work inside a monster model and simplify the notation.

Let $M$ be a c-model.
We say that $M$ is \emph{atomic\/} if for every finite tuple $a$ of elements of $M$ the type $\mbox{c-tp}(a)$ is approximately c-isolated.

\begin{lemma}\label{lem_atomic_iso}
  Let ${\EuScript F}^{\rm c}$ be separable.
  Let $M$ and $N$ be two atomic c-models.
  Let $k$ be a finite c-elementary relation between ${\rm Ccl}(M)$ and ${\rm Ccl}(N)$.
  Then for every $c\in M$ there is $d\in{\rm Ccl}(N)$ such that $k\cup\{\langle c,d\rangle\}$ is a c-elementary relation.
\end{lemma}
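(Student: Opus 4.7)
Plan. We aim to realize the type $p(x,y)=\mbox{c-tp}(a,c)$, where $a$ enumerates $\mathrm{dom}(k)$ and $b$ enumerates the matching range, with first coordinate $b$ and second coordinate in ${\rm Ccl}(N)$. The c-elementarity of $k$ combined with Fact~\ref{fact_Rinverse} gives $\mbox{c-tp}(a)=\mbox{c-tp}(b)$, and replacing $a$ by some $a'\in M^{|a|}$ with $a'\sim a$ (available because $a\in{\rm Ccl}(M)^{|a|}$) and applying Fact~\ref{fact_continuous}, the atomicity of $M$ yields that $\mbox{c-tp}(a',c)=\mbox{c-tp}(a,c)=p$ is approximately c-isolated. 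Fact~\ref{fact_isolationpp} then provides witnesses $\psi_0\rightarrow\psi_1\rightarrow\cdots$ in ${\EuScript F}^{\rm c}$, with the side clause that for any choice $a_n\models\neg\psi_n$ the type $p\cup\{a_n\sim_n(x,y):n\in\omega\}$ is consistent.

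The main step, and the principal obstacle, is to locate the $\neg\psi_n$-witnesses on the $b$-side and, critically, inside $N$ rather than in the ambient ${\EuScript U}$. Although $\neg\psi_n$ is not continuous and therefore cannot be transferred directly through $k$, the formula $\alpha_n(x):=\forall y\,\psi_n(x,y)$ is continuous; from $\neg\psi_n(a,c)$ we infer $\alpha_n\notin\mbox{c-tp}(a)=\mbox{c-tp}(b)$, so $\exists y\,\neg\psi_n(b,y)$ in ${\EuScript U}$. Pick $b'\in N^{|b|}$ with $b'\sim b$ (available because $b\in{\rm Ccl}(N)^{|b|}$); by Fact~\ref{fact_continuous}, $\psi_n(b,y)\leftrightarrow\psi_n(b',y)$, so $\exists y\,\neg\psi_n(b',y)$ as well. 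The Tarski--Vaught test (Theorem~\ref{thm_Tarski_Vaught}, clause~3, with the c-dense set ${\EuScript F}^{\rm c}$ and the parameters $b'\in N$) now produces $f_n\in N$ with $\neg\psi_n(b',f_n)$, and hence $\neg\psi_n(b,f_n)$.

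For the conclusion, setting $a_n:=(b,f_n)$, the side clause of Fact~\ref{fact_isolationpp} makes $p(x,y)\cup\{(b,f_n)\sim_n(x,y):n\in\omega\}$ consistent, hence realized by saturation at some $(x_0,y_0)$ in ${\EuScript U}$. From $x_0\sim_n b$ for every $n$, together with the cofinality of the $\sim_n$ in $\sim$, we get $x_0\sim b$, so Fact~\ref{fact_continuous} yields $\mbox{c-tp}(b,y_0)=\mbox{c-tp}(x_0,y_0)=p$. Setting $d:=y_0$, the relations $d\sim_n f_n\in N$ show that $d\sim' x$ is finitely satisfied in $N$, so $d\in{\rm Ccl}(N)$; and $\mbox{c-tp}(a,c)=\mbox{c-tp}(b,d)$ is precisely the assertion that $k\cup\{\langle c,d\rangle\}$ preserves the truth of continuous formulas, as required.
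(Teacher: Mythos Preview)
Your approach is essentially the paper's: extract the $\psi_n$'s from Fact~\ref{fact_isolationpp}, transfer $\exists y\,\neg\psi_n(\cdot,y)$ across $k$ via c-elementarity, find witnesses on the $N$-side, and realize the limiting type to land in ${\rm Ccl}(N)$. The argument via $\alpha_n(x)=\forall y\,\psi_n(x,y)\in{\EuScript F}^{\rm c}$ is exactly how the paper's terse ``by c-elementarity, $\neg\psi_n(ka,y)$ is consistent'' should be unpacked, and your final step (realize $p(x,y)\cup\{(b,f_n)\sim_n(x,y)\}$, then use $x_0\sim b$ and Fact~\ref{fact_continuous}) is the paper's ``realize $\{d_n\sim_n y\}\cup p(ka,y)$'' written out in full.

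There is, however, one genuine error, occurring twice. You assert that $a\in{\rm Ccl}(M)^{|a|}$ guarantees some $a'\in M^{|a|}$ with $a'\sim a$, and likewise $b'\in N^{|b|}$ with $b'\sim b$. This is false: membership in ${\rm Ccl}(M)$ only says that $a\sim' x$ is \emph{finitely} satisfied in $M$, i.e.\ for each $\varepsilon\in{\EuScript E}$ separately one finds $a'_\varepsilon\in M$ with $a'_\varepsilon\sim_\varepsilon a$; a single $a'$ with $a'\sim a$ need not exist (keep in mind a genuine limit point of a Cauchy sequence in an incomplete metric space). For the $b'$-instance the repair is painless: by Remark~\ref{rem_Tarski_Vaught}, ${\rm Ccl}(N)$ is itself a c-model, so apply Theorem~\ref{thm_Tarski_Vaught}(3) directly with parameters $b\in{\rm Ccl}(N)$ to obtain witnesses $f_n\in{\rm Ccl}(N)$; this is what the paper does (writing $d_n$), and $f_n\in{\rm Ccl}(N)$ still yields $d\in{\rm Ccl}(N)$ by Fact~\ref{fact_finsat}. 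The $a'$-instance, needed to justify that $p=\mbox{c-tp}(a,c)$ is approximately c-isolated when $a$ lies in ${\rm Ccl}(M)$ rather than $M$, is not explicitly addressed in the paper either---it simply invokes Fact~\ref{fact_isolationpp}. Your justification via $a'\sim a$ does not stand, though; if you want to fill this gap you must argue that atomicity passes from $M$ to ${\rm Ccl}(M)$ using only the approximations $a'_\varepsilon\sim_\varepsilon a$.
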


\begin{proof}
  We can assume that $k$ is reduced, i.e.\@ it is an injective map ${\rm Ccl}(M)\rightarrow{\rm Ccl}(N)$.
  Let $a$ enumerate ${\rm dom}(k)$.
  Let $p(x,y)=\mbox{c-tp}(a,c)$, it suffices to prove that $p(x,y)$ is realized by some $d\in{\rm Ccl}(N)$.
  By Fact~\ref{fact_isolationpp} there are some formulas $\psi_n(x,y)\in{\EuScript F}^{\rm c}$ such that
  
  \ceq{\hfill\neg\psi_n(x,y)}{\rightarrow}{\exists x',y'\sim_nx,y\ \ p(x',y')}
  
  and $\psi_n(x,y)\rightarrow\psi_{n+1}(x,y)$.
  As $\neg\psi_n(x,y)$ is consistent with $p(x,y)$ then $a,c\models\neg\psi_n(x,y)$.
  Then by c-elementarity, $\neg\psi_n(ka,y)$ is consistent.
  Let $d_n\in{\rm Ccl}(M)$ be such that $ka,d_n\models\neg\psi_n(x,y)$.
  Let $d$ be a realization of the type $\{d_n\sim_ny:n\in\omega\}\cup p(ka,y)$.
  As $d\in{\rm Ccl}(N)$, the proof is complete.
\end{proof}

\begin{fact}\label{fact_atomic_iso}
  Let $M$ and $N$ be countable atomic c-models.
  Then ${\rm Ccl}(M)$ and ${\rm Ccl}(N)$ are c-isomorphic.
\end{fact}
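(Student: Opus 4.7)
The approach is a back-and-forth argument, powered by Lemma~\ref{lem_atomic_iso}. The plan is to enumerate $M = \{a_i : i < \omega\}$ and $N = \{b_i : i < \omega\}$ and build an increasing chain of finite c-elementary relations $k_0 \subseteq k_1 \subseteq \cdots$ starting from $k_0 = \emptyset$. At an even stage $2i$, I would apply Lemma~\ref{lem_atomic_iso} with $c = a_i$ to extend $k_{2i}$ by a pair $\langle a_i, d \rangle$ with $d \in {\rm Ccl}(N)$. At an odd stage $2i+1$, I would apply the same lemma to $k_{2i+1}^{-1}$ (which is c-elementary by Fact~\ref{fact_Rinverse}) with $c = b_i$ to extend by a pair $\langle c, b_i \rangle$ with $c \in {\rm Ccl}(M)$. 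The union $k = \bigcup_n k_n$ is then a c-elementary relation, since c-elementarity is checked pointwise on tuples, with $M \subseteq {\rm dom}(k) \subseteq {\rm Ccl}(M)$ and $N \subseteq {\rm range}(k) \subseteq {\rm Ccl}(N)$.

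Next, I would form the Cauchy completion ${\rm Ccl}(k)$ viewed as a subset of ${\EuScript U} \times {\EuScript V}$. By Fact~\ref{fact_Rcompletion} this still preserves the truth of continuous atomic formulas. Setting $R = {\rm Ccl}(k) \cap \big( {\rm Ccl}(M) \times {\rm Ccl}(N) \big)$, I would verify that ${\rm dom}(R) = {\rm Ccl}(M)$ and ${\rm range}(R) = {\rm Ccl}(N)$. Indeed, any $a \in {\rm Ccl}(M)$ is approximated arbitrarily closely (in the uniformity induced by $S$) by elements of $M \subseteq {\rm dom}(k)$, whose images under $k$ lie in ${\rm Ccl}(N)$ and, by c-elementarity plus saturation of ${\EuScript V}$, can be consolidated into a single partner $b \in {\rm Ccl}(N)$ with $\langle a, b \rangle \in {\rm Ccl}(k)$; the range statement is symmetric. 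Thus $R$ is a c-isomorphism between ${\rm Ccl}(M)$ and ${\rm Ccl}(N)$ in the sense of Definition~\ref{def_ciso}.

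The main obstacle is keeping the bookkeeping straight: the partners chosen at each back-and-forth step only live in the Cauchy completions rather than in the original models, so the raw union $k$ does not yet have the domain and range demanded by Definition~\ref{def_ciso}. This is exactly what Fact~\ref{fact_Rcompletion} is designed to remedy, by letting us pass from $k$ to ${\rm Ccl}(k)$ without losing preservation of atomic continuous formulas. Once this is arranged, all the real work has been packaged into Lemma~\ref{lem_atomic_iso}, which supplies the single-element extension; the result then follows from the standard countable back-and-forth template, adapted to the continuous setting by routing everything through Cauchy completions.
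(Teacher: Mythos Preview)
Your proposal is correct and follows essentially the same back-and-forth argument as the paper, using Lemma~\ref{lem_atomic_iso} at each step. The only difference is in the final packaging: the paper sets $R=\bigcup_{n}(\sim)\circ f_n\circ(\sim)$ and declares this a c-isomorphism between $M$ and $N$, whereas you pass to ${\rm Ccl}(k)$ via Fact~\ref{fact_Rcompletion} and explicitly verify that the domain and range equal ${\rm Ccl}(M)$ and ${\rm Ccl}(N)$; your version is slightly more careful on this point, but the underlying argument is the same.
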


\begin{proof}
  We construct by back-and-forth a sequence $f_n:{\rm Ccl}(M)\rightarrow{\rm Ccl}(N)$ of finite c-elementary reduced relations (i.e.\@ injective maps) and let 
  
  \ceq{\hfill R}{=}{\bigcup_{n\in\omega}(\sim)\circ f_n\circ(\sim).}

  We will ensure that $M\subseteq{\rm dom}(R)$ and $N\subseteq{\rm range}(R)$, hence that $R$ is a c-isomorphism between $M$ and $N$.

  Let $n$ be even.
  Let $a$ be an enumeration of ${\rm dom}(f_n)$.
  Let $c\in M$.
  Let $p(x,y)=\mbox{c-tp}(a,b)$.
  As $M$ is atomic, by Lemma~\ref{lem_atomic_iso} there is a $d\in{\rm Ccl}(N)$ such that the relation $f_n\cup\{\langle c,d\rangle\}$ is c-elementary.
  Let $f_{n+1}$ be a reduced relation such that $f_{n+1}\ \subseteq\ f_n\cup\{\langle a,c\rangle\}\ \subseteq\ (\sim)\circ f_{n+1}\circ(\sim)$.

  When $n$ is odd we proceed similarly with $f_n^{-1}$.
\end{proof}

The following analogue of Ryll-Nardzewski's Theorem follows by the classical argument.

\begin{theorem}
  Let ${\EuScript F}^{\rm c}$ be separable.
  Then the following are equivalent
  \begin{itemize}
    \item[1.] $T$ is c-$\omega$-categorical;
    \item[2.] every consistent type $p(x)\subseteq{\EuScript F}^{\rm c}$, for $x$ any finite tuple of variables, is approximately c-isolated.
  \end{itemize}
\end{theorem}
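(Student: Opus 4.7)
The plan is to adapt the classical Ryll-Nardzewski argument to the continuous setting, with approximate c-isolation replacing isolation and realization read inside the Cauchy completion.

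For (2) $\Rightarrow$ (1), I would first note that under (2) every countable c-model $M$ is atomic: for each finite tuple $a \in M$ the type $\mbox{c-tp}(a)$ is consistent and therefore approximately c-isolated by hypothesis. Given two countable c-models $M$ and $N$, I would then apply Fact~\ref{fact_atomic_iso} to produce a c-isomorphism between them, with Lemma~\ref{lem_atomic_iso} supplying the extension step in the back-and-forth. Concretely, one builds finite reduced c-elementary relations $f_n$ between ${\rm Ccl}(M)$ and ${\rm Ccl}(N)$, alternately extending the domain by the next enumerated element of $M$ and the range by the next enumerated element of $N$; the limit $R = \bigcup_n (\sim) \circ f_n \circ (\sim)$ then satisfies $M \subseteq {\rm dom}(R)$ and $N \subseteq {\rm range}(R)$, which is what Definition~\ref{def_ciso} requires.

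For (1) $\Rightarrow$ (2), I would proceed by contraposition. Suppose that a consistent c-type $p(x) \subseteq {\EuScript F}^{\rm c}$ is not approximately c-isolated. By the Continuous Omitting Types Theorem (Theorem~\ref{thm_cOTT}) applied with $A = \emptyset$, there is a countable c-model $M$ whose Cauchy completion omits $p$. Independently, pick a realization $a \in {\EuScript U}^{|x|}$ of $p$, which exists by consistency, and apply the downward L\"owenheim-Skolem theorem to produce a countable c-model $N$ containing $a$; then $a \in {\rm Ccl}(N)$ realizes $p$. If $T$ were c-$\omega$-categorical, a c-isomorphism $R$ between $M$ and $N$ would exist; by Fact~\ref{fact_c_iso} one could extract a c-elementary injection $f \subseteq {\rm Ccl}(R)$ whose domain and range meet every $\sim$-class. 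Moving $a$ to some $a' \sim a$ in ${\rm range}(f)$, which is still a realization of $p$ by Fact~\ref{fact_continuous}, and then pulling back along $f^{-1}$ would give a realization of $p$ inside ${\rm Ccl}(M)$, contradicting the choice of $M$.

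The main obstacle will be the extension step in (2) $\Rightarrow$ (1). Lemma~\ref{lem_atomic_iso} only delivers the matching element inside ${\rm Ccl}(N)$, so each $f_n$ a priori lives in ${\rm Ccl}(M) \times {\rm Ccl}(N)$ even though the definition of c-isomorphism demands that the final relation have domain exactly $M$ and range exactly $N$. Fact~\ref{fact_Rcompletion} is the tool that lets atomic preservation on $R$ stand in for c-elementarity on ${\rm Ccl}(R)$, so the construction succeeds provided the enumerations of $M$ and $N$ are threaded through the back-and-forth with enough care that every designated element falls into the $\sim$-closure of $\bigcup_n f_n$. Direction (1) $\Rightarrow$ (2) is comparatively routine once Theorem~\ref{thm_cOTT}, downward L\"owenheim-Skolem, and Fact~\ref{fact_c_iso} are on the table.
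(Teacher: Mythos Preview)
Your proposal is correct and follows the same two-step strategy as the paper: for (2)$\Rightarrow$(1) observe that every c-model is atomic and invoke Fact~\ref{fact_atomic_iso}; for (1)$\Rightarrow$(2) contrapose via Theorem~\ref{thm_cOTT} and a countable c-model realizing $p$. Your ``main obstacle'' paragraph is unnecessary here, since the back-and-forth bookkeeping you worry about is already absorbed into Fact~\ref{fact_atomic_iso}, which you correctly cite as a black box; likewise your detailed transfer argument in (1)$\Rightarrow$(2) via Fact~\ref{fact_c_iso} and Fact~\ref{fact_continuous} spells out what the paper leaves implicit in the one-line claim that $M$ and $N$ are not c-isomorphic.
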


\begin{proof}
  (1$\Rightarrow$2) \ 
  If $p(x)$ is consistent it is realized in some countable c-model $M$.
  If $p(x)$ is not approximately c-isolated, by the continuous omitting type theorem there is a c-model $N$ such that  ${\rm Ccl}(N)$ omits $p(x)$.
  Then $M$ and $N$ are not c-isomorphic, hence $T$ is not c-$\omega$-categorical.

  (2$\Rightarrow$1) \ 
  By (2), every c-model is atomic.
  Then c-$\omega$-categoricity follows from Fact~\ref{fact_atomic_iso}.
\end{proof}

\section{Stable formulas}

\def\ceq#1#2#3{\parbox[t]{24ex}{$\displaystyle #1$}\parbox[t]{6ex}{$\displaystyle\hfil #2$}{$\displaystyle #3$}}

The results in this section are completely classical.
The formulas $\varphi(x\,;z)$ and $\tilde\varphi(x\,;z)$ in Theorem~\ref{thm_stability} are not required to be positive, in fact, saturation is not required in this section. 
We prove a version of a fundamental property of stable formulas.
Namely, externally definable sets are internally definable when the defining formula is stable.
The result is relevant to the subject of this paper when we take $\varphi$ positive and $\tilde\varphi\perp\varphi$.

The following definition is from~\cite{Hr}.
We say that the formulas $\varphi(x\,;z),\tilde\varphi(x\,;z)\in{\EuScript L}$ are \emph{stably separated\/} if for some $n<\omega$ there is no sequence $\langle a_i\,;b_i\,:\,i<n\rangle$ such that 

\ceq{{\rm i.}\hfill i<j}{\Rightarrow}{\varphi(a_i\,;b_j)\ \wedge\ \tilde\varphi(a_j\,;b_i)}\hfill  for every $i,j<n$.

Clearly, two stably separated formulas are mutually inconsistent.
Note that $\varphi,\neg\varphi$ are stably separated exactly when $\varphi$ is stable (in ${\EuScript U}$).

We say that the sets ${\EuScript D},\tilde{\EuScript D}\subseteq{\EuScript U}^{|z|}$ are \emph{finitely separated\/} by $\varphi(x\,;z),\tilde\varphi(x\,;z)$ if for every finite set $B\subseteq{\EuScript U}^{|z|}$ there is an $a\in{\EuScript U}^{|x|}$ such that 

\ceq{{\rm ii.}\hfill B\ \cap\ {\EuScript D}}{\subseteq}{B\ \cap\ \varphi(a\,;{\EuScript U});}

\ceq{{\rm iii.}\hfill B\ \cap\ \tilde{\EuScript D}}{\subseteq}{B\ \cap\ \tilde\varphi(a\,;{\EuScript U}).}

Note that this equivalent to requiring that the type below is finitely consistent

\ceq{\hfill p(x)}{=}{\big\{\varphi(x\,;b)\ :\ b\in{\EuScript D}\big\}\ \cup\ \big\{\tilde\varphi(x\,;b)\ :\ b\in\tilde{\EuScript D}\big\}}

Therefore, when $\varphi,\tilde\varphi$ are positive, it follows that ${\EuScript D},\tilde{\EuScript D}$ are \emph{externally separated.}
That is, there is a p-elementary extension ${}^*\!{\EuScript U}$ of ${\EuScript U}$ and an ${}^*\!\!a\in {}^*\!{\EuScript U}^{|x|}$ such that ${\EuScript D}\subseteq\varphi({}^*\!\!a\,;
^*\!{\EuScript U})$ and $\tilde{\EuScript D}\subseteq\tilde\varphi({}^*\!\!a\,;{}^*\!{\EuScript U})$.

Finally, we say that they are \emph{honestly\/} finitely separated if we can also guarantee that 

\ceq{{\rm iv.}\hfill{\EuScript D}\ \cap\ \tilde\varphi(a\,;{\EuScript U})}{=}{\varnothing;}

\ceq{{\rm v.}\hfill\tilde{\EuScript D}\ \cap\ \varphi(a\,;{\EuScript U})}{=}{\varnothing.}

If only (iv) obtains, we say \emph{half-honestly\/} finitely separated.

\begin{lemma}\label{lem_sability1}
  Let $\varphi(x\,;z),\tilde\varphi(x\,;z)$ be stably separated.
  If ${\EuScript D},\tilde{\EuScript D}$ are finitely separated by $\varphi,\tilde\varphi$ then ${\EuScript D},\tilde{\EuScript D}$ are half-honestly finitely separated by $\psi,\tilde\psi$ where 
  
  \ceq{\hfill\psi(x_0,\dots,x_m\,z)}{=}{\varphi(x_0\,;z)\vee\dots\vee\varphi(x_m\,;z)}
  
  \ceq{\hfill\tilde\psi(x_0,\dots,x_m\,z)}{=}{\tilde\varphi(x_0\,;z)\wedge\dots\wedge\tilde\varphi(x_m\,;z)}
  
\end{lemma}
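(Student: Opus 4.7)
The plan is to use the stable separation bound $n$ directly: set $m = n-1$ so that $\psi$ and $\tilde\psi$ involve $n$ variables $x_0,\dots,x_{n-1}$. The proof will construct, for a given finite $B$, the tuple $(a_0,\dots,a_m)$ by induction, interlacing the $a_i$'s with carefully chosen $b_i \in {\EuScript D}$ that would witness failure of property (iv). The forbidden-sequence condition from stable separation will force the construction to terminate within $n$ steps, after which we can pad by repetition.

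More concretely, I would start with $a_0 \in {\EuScript U}^{|x|}$ witnessing finite separation of $\varphi,\tilde\varphi$ on $B$; so $B \cap {\EuScript D} \subseteq \varphi(a_0;{\EuScript U})$ and $B \cap \tilde{\EuScript D} \subseteq \tilde\varphi(a_0;{\EuScript U})$. Inductively, given $a_0,\dots,a_k$, I ask whether ${\EuScript D} \cap \bigcap_{i\leq k} \tilde\varphi(a_i;{\EuScript U}) = \varnothing$. If yes, stop at stage $k$. If no, pick $b_k \in {\EuScript D}$ with $\tilde\varphi(a_i;b_k)$ for all $i\leq k$, and let $a_{k+1}$ be a witness to finite separation applied to the larger finite set $B \cup \{b_0,\dots,b_k\}$. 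Since each $b_i \in {\EuScript D}$, this forces $\varphi(a_{k+1};b_i)$ for $i\leq k$, while simultaneously preserving the original conditions on $B$.

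Now observe the resulting sequence $\langle a_i;b_i\rangle$ verifies, for $i<j$, both $\varphi(a_j;b_i)$ (by the separation applied at stage $j$ to $b_i\in{\EuScript D}$) and $\tilde\varphi(a_i;b_j)$ (by the very choice of $b_j$). Stable separation of $\varphi,\tilde\varphi$ rules out such a sequence of length $n$, so the process must terminate at some $m'\leq n-1 = m$. If $m' < m$, pad by setting $a_{m'+1}=\dots=a_m = a_0$; property (iv), namely ${\EuScript D} \cap \bigcap_{i\leq m}\tilde\varphi(a_i;{\EuScript U})=\varnothing$, still holds because the intersection over $i\leq m$ is contained in the intersection over $i\leq m'$, which is empty by the stopping condition.

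Finally I would verify (ii) and (iii) for the padded tuple: (ii) $B\cap{\EuScript D} \subseteq \varphi(a_0;{\EuScript U}) \subseteq \psi(a_0,\dots,a_m;{\EuScript U})$, and (iii) $B \cap \tilde{\EuScript D} \subseteq \tilde\varphi(a_i;{\EuScript U})$ for each $i$ (since each $a_i$ was chosen to separate a superset of $B$), hence $B\cap\tilde{\EuScript D} \subseteq \tilde\psi(a_0,\dots,a_m;{\EuScript U})$. The only nontrivial step is identifying the right termination condition, which is just the stable-separation bound; there is no real obstacle, merely bookkeeping.
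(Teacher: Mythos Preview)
Your proposal is correct and follows essentially the same argument as the paper: both build the tuple $a_0,\dots,a_m$ recursively, interleaving with witnesses $b_k\in{\EuScript D}$ to the failure of (iv), and use stable separation (after reversing the order of the resulting sequence) to bound the number of steps. The only cosmetic differences are that you pad with $a_0$ while the paper pads with the last constructed $a_{m'}$, and you make the identification $m=n-1$ explicit; neither affects the argument.
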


\begin{proof}
  Let $m$ be such that no sequence $\langle a_i\,;b_i : i\le m\rangle$ satisfies (i$'$).
  Let $B\subseteq{\EuScript U}^{|z|}$ be finite.
  It suffices to prove that there are some $a_0,\dots,a_m$ such that 
  
  \ceq{\hfill B\cap{\EuScript D}}{\subseteq}{\varphi(a_i,\,;{\EuScript U})}\hfill for every $0\le i\le m$;
  
  \ceq{\hfill B\cap\tilde{\EuScript D}}{\subseteq}{\tilde\varphi(a_i\,;{\EuScript U})}\hfill for every $0\le i\le m$;

  \ceq{\hfill\varnothing}{=}{{\EuScript D}\ \cap\ \bigcap_{i=0}^m\tilde\varphi(a_i,\,;{\EuScript U}).}

  We define $a_n$ and $b_n$ by recursion.
  Suppose that $a_0,\dots,a_{n-1}$ and $b_0,\dots,b_{n-1}\in{\EuScript D}$ have been defined.
  We first define $a_n$, then $b_n$. 
  Choose $a_n\in{\EuScript U}^{|x|}$ such that 
  
  \ceq{\hfill\{b_0,\dots,b_{n-1}\}\cup \big(B\cap{\EuScript D}\big)}{\subseteq}{\varphi(a_n,\,;{\EuScript U})};

  \ceq{\hfill B\cap\tilde{\EuScript D}}{\subseteq}{\tilde\varphi(a_n\,;{\EuScript U}).}

  This is possible by (ii) and (iii).
  Now, if possible, choose $b_n$ such that

  \ceq{\hfill b_n}{\in}{{\EuScript D}\cap\bigcap_{i=0}^n\tilde\varphi(a_i,\,;{\EuScript U})}.

  If this is not possible, then set $a_i=a_m$ for $n<i\le m$.
  The required tuple is $a_0,\dots,a_m$.
  We claim that at some $n\le m$ the procedure halts.
  In fact, otherwise we would contradict the choice of $m$ as the sequence $\langle a_i\,;b_i : i\le m\rangle$ satisfies (i) up to reversing the order.
\end{proof}

\begin{lemma}\label{lem_sability2}
  Let $\varphi(x\,;z),\tilde\varphi(x\,;z)$ be stably separated.
  If ${\EuScript D},\tilde{\EuScript D}$ is half-honestly finitely separated by $\varphi,\tilde\varphi$ then for some $m$ they are honestly finitely separated by $\psi,\tilde\psi$ where 
  
  \ceq{\hfill\psi(x_0,\dots,x_m\,z)}{=}{\varphi(x_0\,;z)\wedge\dots\wedge\varphi(x_m\,;z)}
  
  \ceq{\hfill\tilde\psi(x_0,\dots,x_m\,z)}{=}{\tilde\varphi(x_0\,;z)\vee\dots\vee\tilde\varphi(x_m\,;z)}
\end{lemma}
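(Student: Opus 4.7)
The plan is to carry out an alternating recursion in the spirit of Lemma~\ref{lem_sability1}, with the number of steps bounded by the stable separation hypothesis. Let $N<\omega$ witness the stable separation of $\varphi,\tilde\varphi$ and set $m=N-1$.

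Fix a finite $B\subseteq{\EuScript U}^{|z|}$. First I would observe that, whenever $a_0,\dots,a_m$ each individually witness the half-honest finite separation of ${\EuScript D},\tilde{\EuScript D}$ by $\varphi,\tilde\varphi$ relative to $B$, then automatically $B\cap{\EuScript D}\subseteq\psi(a_0,\dots,a_m\,;{\EuScript U})$, $B\cap\tilde{\EuScript D}\subseteq\tilde\psi(a_0,\dots,a_m\,;{\EuScript U})$, and ${\EuScript D}\cap\tilde\psi(a_0,\dots,a_m\,;{\EuScript U})=\varnothing$; that is, (ii), (iii), (iv) for $\psi,\tilde\psi$ follow from their analogues for $\varphi,\tilde\varphi$ applied in each coordinate (with (iv) already being a global statement independent of $B$). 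The only genuinely new requirement is honesty on the $\tilde{\EuScript D}$ side, i.e.\ $\tilde{\EuScript D}\cap\psi(a_0,\dots,a_m\,;{\EuScript U})=\varnothing$, which says: for every $b\in\tilde{\EuScript D}$, at least one $a_i$ satisfies $\neg\varphi(a_i\,;b)$.

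To arrange this last condition, I would build the $a_i$ by recursion, interleaved with a sequence $b_0,b_1,\ldots\in\tilde{\EuScript D}$. Given $a_0,\dots,a_{n-1}$ and $b_0,\dots,b_{n-2}$, search for a $b\in\tilde{\EuScript D}$ satisfying $\varphi(a_i\,;b)$ for every $i<n$. If none exists, halt and pad $a_n=\cdots=a_m=a_{n-1}$; the halting criterion is exactly the honesty requirement above. Otherwise, set $b_{n-1}=b$ and apply half-honest finite separation to the enlarged set $B\cup\{b_0,\dots,b_{n-1}\}$ to produce $a_n$; condition (iii) then gives $\tilde\varphi(a_n\,;b_j)$ for every $j<n$, while (ii) and (iv) for $a_n$ relative to $B$ are preserved by construction.

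The crux, and really the only non-routine step, is to show that the recursion must halt by stage $N$. Were it not to, the sequence $\langle a_i\,;b_i:i<N\rangle$ would satisfy $\varphi(a_i\,;b_j)$ for $i\le j$ (by the search criterion that produced $b_j$) and $\tilde\varphi(a_j\,;b_i)$ for $i<j$ (by the application of (iii) that produced $a_j$), contradicting the choice of $N$ witnessing stable separation of $\varphi,\tilde\varphi$. Hence some halting stage $n\le N$ is reached, and the padded tuple $a_0,\dots,a_m$ honestly finitely separates ${\EuScript D},\tilde{\EuScript D}$ by $\psi,\tilde\psi$, as required.
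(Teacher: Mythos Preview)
Your argument is correct and follows essentially the same route as the paper's proof: both build the $a_i$ by recursion, applying half-honest finite separation to $B$ enlarged by the previously chosen $b_j\in\tilde{\EuScript D}$, and both terminate because an $N$-step run would exhibit a ladder violating stable separation. The only differences are cosmetic---your indexing shifts the $b$-step before the $a$-step, and you spell out explicitly that conditions (ii)--(iv) for $\psi,\tilde\psi$ are inherited coordinatewise while only (v) needs work, which is a nice expository touch the paper leaves implicit.
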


\begin{proof}
  Let $m$ be such that no sequence $\langle a_i\,;b_i : i\le m\rangle$ satisfies (i).
  Let $B\subseteq{\EuScript U}^{|z|}$ be finite.
  It suffices to prove that there are some $a_0,\dots,a_m$ such that 
  
  \ceq{\hfill B\cap{\EuScript D}}{\subseteq}{\varphi(a_i,\,;{\EuScript U})}\hfill for every $0\le i\le m$;
  
  \ceq{\hfill B\cap\tilde{\EuScript D}}{\subseteq}{\tilde\varphi(a_i\,;{\EuScript U})}\hfill for every $0\le i\le m$;

  \ceq{\hfill\varnothing}{=}{{\EuScript D}\ \cap\ \tilde\varphi(a_i,\,;{\EuScript U})}\hfill for every $0\le i\le m$;

  \ceq{\hfill\varnothing}{=}{\tilde{\EuScript D}\ \cap\ \bigcap_{i=0}^m\varphi(a_i,\,;{\EuScript U}).}

  We define $a_n$ and $b_n$ by recursion.
  Suppose that $a_0,\dots,a_{n-1}$ and $b_0,\dots,b_{n-1}\in\tilde{\EuScript D}$ have been defined.
  We first define $a_n$, then $b_n$. 
  Choose $a_n\in{\EuScript U}^{|x|}$ such that 
  
  \ceq{\hfill B\cap{\EuScript D}}{\subseteq}{\varphi(a_n,\,;{\EuScript U})};
  
  \ceq{\hfill\{b_0,\dots,b_{n-1}\}\cup \big( B\cap\tilde{\EuScript D}\big)}{\subseteq}{\tilde\varphi(a_n\,;{\EuScript U}).}

  \ceq{\hfill\varnothing}{=}{{\EuScript D}\ \cap\ \tilde\varphi(a_i,\,;{\EuScript U}).}

  This is possible by (ii), (iii) and (iv).
  Now, if possible, choose $b_n$ such that

  \ceq{\hfill b_n}{\in}{\tilde{\EuScript D}\cap\bigcap_{i=0}^n\varphi(a_i,\,;{\EuScript U})}.

  If this is not possible, then set $a_i=a_m$ for $n<i\le m$.
  The required tuple is $a_0,\dots,a_m$.
  We claim that at some $n\le m$ the procedure above halts.
  In fact, otherwise we would contradict the choice of $m$ as the sequence $\langle a_i\,;b_i : i\le m\rangle$ satisfies (i).
\end{proof}

\begin{theorem}\label{thm_stability}
  Let $\varphi(x\,;z),\tilde\varphi(x\,;z)$ be stably separated.
  If ${\EuScript D},\tilde{\EuScript D}$ is finitely separated by $\varphi,\tilde\varphi$ then for some $m,n$ they are honestly finitely separated by $\psi,\tilde\psi$ where 
  
  \ceq{\hfill\psi(\langle x_{i,j}\rangle_{i,j\le m,n}\,;z)}{=}{\bigwedge_{i=0}^m\bigvee_{j=0}^n\varphi(x_{i,j}\,;z)}
  
  \ceq{\hfill\tilde\psi(\langle x_{i,j}\rangle_{i,j\le m,n}\,;z)}{=}{\bigvee_{i=0}^m\bigwedge_{j=0}^n\tilde\varphi(x_{i,j}\,;z)}
\end{theorem}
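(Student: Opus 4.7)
The plan is to chain Lemma~\ref{lem_sability1} and Lemma~\ref{lem_sability2}. By Lemma~\ref{lem_sability1} there is some $n$ for which ${\EuScript D},\tilde{\EuScript D}$ are half-honestly finitely separated by the intermediate pair
\[\psi^1(x_0,\dots,x_n\,;z)\ =\ \bigvee_{j=0}^n\varphi(x_j\,;z),\qquad \tilde\psi^1(x_0,\dots,x_n\,;z)\ =\ \bigwedge_{j=0}^n\tilde\varphi(x_j\,;z).\]
Granting that $\psi^1,\tilde\psi^1$ remain stably separated (the point of the next paragraph), Lemma~\ref{lem_sability2} supplies some $m$ and an honest finite separation of ${\EuScript D},\tilde{\EuScript D}$ by $\bigwedge_{i=0}^m\psi^1(x_i^*\,;z)$ and $\bigvee_{i=0}^m\tilde\psi^1(x_i^*\,;z)$, where $x_i^*=(x_{i,0},\dots,x_{i,n})$. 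Expanding the definitions, these are the formulas $\bigwedge_{i=0}^m\bigvee_{j=0}^n\varphi(x_{i,j}\,;z)$ and $\bigvee_{i=0}^m\bigwedge_{j=0}^n\tilde\varphi(x_{i,j}\,;z)$ demanded by the theorem.

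The main point is the preservation of stable separation. Suppose $\varphi,\tilde\varphi$ admit no bad sequence of length $N_0$. Choose $N$ exceeding the Ramsey number that guarantees, from any $(n+1)$-coloring of the pairs of an $N$-element set, a monochromatic subset of size $N_0$. Let $\langle (a_k^*,b_k):k<N\rangle$ be a hypothetical witness to a failure of $N$-stable separation for $\psi^1,\tilde\psi^1$. For every $k<l$ the disjunction $\psi^1(a_k^*\,;b_l)$ yields an index $j(k,l)\in\{0,\dots,n\}$ with $\varphi(a_{k,j(k,l)}\,;b_l)$; by Ramsey there is a subsequence $k_1<\dots<k_{N_0}$ along which $j(k,l)$ takes a constant value $j$. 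Because $\tilde\psi^1(a_{k_l}^*\,;b_{k_i})$ entails $\tilde\varphi(a_{k_l,j}\,;b_{k_i})$ for \emph{every} $j$, the reduced sequence $\langle (a_{k_i,j},b_{k_i}):i<N_0\rangle$ witnesses a violation of $N_0$-stable separation for $\varphi,\tilde\varphi$, contradicting the hypothesis. Hence $\psi^1,\tilde\psi^1$ are stably separated and Lemma~\ref{lem_sability2} applies.

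The only real obstacle is this Ramsey-type reduction: one must extract a uniform choice of disjunct on the $\psi^1$-side and exploit the conjunctive form of $\tilde\psi^1$ to obtain the matching witness on the $\tilde\varphi$-side regardless of which index was chosen. Once stable separation is transferred to $\psi^1,\tilde\psi^1$, the two lemmas compose with no further work, and the resulting parameters $m,n$ are controlled by $N_0$ and the relevant Ramsey bound.
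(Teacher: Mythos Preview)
Your proposal is correct and follows exactly the approach of the paper: apply Lemma~\ref{lem_sability1}, verify via a Ramsey argument that the resulting pair $\psi^1,\tilde\psi^1$ is again stably separated, then apply Lemma~\ref{lem_sability2}. The paper simply invokes ``a well-known Ramsey's argument'' for the middle step, whereas you spell it out; your extraction of a constant disjunct-index $j$ on a monochromatic set, combined with the observation that the conjunctive form of $\tilde\psi^1$ automatically yields the matching $\tilde\varphi$-witness, is precisely the intended argument.
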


\begin{proof}
  A well-known Ramsey's argument proves that the formulas $\psi,\tilde\psi$ in Lemma~\ref{lem_sability1} are stably separated.
  Therefore they meet the assumption of Lemma~\ref{lem_sability2} and we can apply Lemma~\ref{lem_sability1} and~\ref{lem_sability1} in succession.
\end{proof}

Let $\xi$ be a variable of sort ${\sf S}$.
Let $\varphi(x\,;z\,;\xi)$ be a formula such that $\forall x,z\ \exists^{=1}\xi\ \varphi(x\,;z\,;\xi)$.
Write $f(x\,;z)=\xi$ for $\varphi(x\,;z\,;\xi)$.
The following remark links our discussion of stability to the double limits interpretation in~\cite{BY}.

\begin{remark}
  The following are equivalent 
  \begin{itemize}
    \item[1.] $f(x\,;z)\in C$ and $f(x\,;z)\in\tilde C$ are stably separated for every disjoint compat sets $C$ and $\tilde C$.
    \item[2.] for every sequence $\langle a_i\,;b_i\,:\,i<\omega\rangle$ 
    
    \ceq{\hfill\lim_{i\to\infty}\lim_{j\to\infty} f(a_i\,;b_j)}{=}{\lim_{j\to\infty}\lim_{i\to\infty} f(a_i\,;b_j)}
  \end{itemize}
\end{remark}
\newcommand\biburl[1]{\url{#1}}
\BibSpec{arXiv}{%
  +{}{\PrintAuthors}{author}
  +{,}{ \textit}{title}
  +{}{ \parenthesize}{date}
  +{,}{ arXiv:}{eprint}
  +{,}{ } {note}
}

\BibSpec{webpage}{%
  +{}{\PrintAuthors} {author}
  +{,}{ \textit} {title}
  +{,}{ } {portal}
  +{}{ \parenthesize} {date}
  +{,}{ } {doi}
  +{,}{ } {note}
  +{.}{ } {transition}
}
\begin{bibdiv}
\begin{biblist}[]\normalsize

\bib{clcl}{article}{
    label={AAVV},
    author = {Agostini, Claudio},
    author = {Baratella, Stefano},
    author = {Barbina, Silvia},
    author = {Motto Ros, Luca},
    author = {Zambella, Domenico},
    title = {Continuous logic in a classical setting},
    note={Submitted},
    date = {2023},
  }

  \bib{A}{webpage}{
    label={A},
    author={Auslander, Josef},
    title={Topological Dynamics},
    portal={Scholarpedia},
    doi={doi:10.4249/ scholarpedia.3449},
    date={2008},
}

  \bib{BY}{article}{
    label={BY},
    author={Ben Yaacov, Ita\"{\i}},
    title={Model theoretic stability and definability of types, after A.
    Grothendieck},
    journal={Bull. Symb. Log.},
    volume={20},
    date={2014},
    number={4},
    pages={491--496},
 }

\bib{BBHU}{article}{
  label={BBHU},
  author={Ben Yaacov, Ita\"{\i}},
  author={Berenstein, Alexander},
  author={Henson, C. Ward},
  author={Usvyatsov, Alexander},
  title={Model theory for metric structures},
  conference={
      title={Model theory with applications to algebra and analysis. Vol. 2},
  },
  book={
      series={London Math. Soc. Lecture Note Ser.},
      volume={350},
      publisher={Cambridge Univ. Press, Cambridge},
  },
  date={2008},
  pages={315--427},
}



\bib{Hr}{article}{
   label={Hr},
   author={Hrushovski, Ehud},
   title={Stable group theory and approximate subgroups},
   journal={J. Amer. Math. Soc.},
   volume={25},
   date={2012},
   number={1},
   pages={189--243},
}


\bib{HI}{article}{
  label={HI},
  author={Henson, C. Ward},
  author={Iovino, Jos\'{e}},
  title={Ultraproducts in analysis},
  conference={
    title={Analysis and logic},
    address={Mons},
    date={1997},
   },
   book={
      series={London Math. Soc. Lecture Note Ser.},
      volume={262},
      publisher={Cambridge Univ. Press, Cambridge},
   },
   date={2002},
   pages={1--110},
}

\bib{K}{arXiv}{
  label={K},
  author = {Keisler, H. Jerome},
  title = {Model Theory for Real-valued Structures},
  eprint={2005.11851},
  doi = {10.48550/ARXIV.2005.11851},
  url = {https://arxiv.org/abs/2005.11851},
  publisher = {arXiv},
  date = {2020},
}



\end{biblist}
\end{bibdiv}
\end{document}